\newtheorem{theorem}{Theorem}[section]
\newtheorem{lemma}[theorem]{Lemma}
\newtheorem{proposition}[theorem]{Proposition}
\theoremstyle{definition}
\newtheorem{remark}[theorem]{Remark}
\numberwithin{equation}{section}
\newcommand{\NN}{\ensuremath{\mathbb{N}}}
\newcommand{\R}{\mathbb{R}}
\title[Ground state on dumbbell graph]{Ground state on the dumbbell graph}
\author[J. L. Marzuola]{Jeremy L. Marzuola}
\address{Mathematics Department, University of North Carolina - Chapel Hill, Chapel Hill, NC 27599, USA}
\email{marzuola@math.unc.edu}
\author[D.E. Pelinovsky]{Dmitry E. Pelinovsky}
\address{Department of Mathematics, McMaster University, Hamilton, Ontario, L8S 4K1, Canada}
\email{dmpeli@math.mcmaster.ca}
\address{Department of Applied Mathematics, Nizhny Novgorod State Technical University, 24 Minin Street, Nizhny Novgorod, 603950, Russia}
\begin{document}

\begin{abstract}
We consider standing waves in the focusing nonlinear Schr\"odinger (NLS) equation on a dumbbell graph
(two rings attached to a central line segment subject to the Kirchhoff boundary conditions at the junctions).
In the limit of small $L^2$ norm, the ground state (the orbitally stable standing wave
of the smallest energy at a fixed $L^2$ norm) is represented by a constant solution. However, when the
$L^2$ norm is increased, this constant solution undertakes two bifurcations, where the first is the pitchfork (symmetry breaking)
bifurcation and the second one is the symmetry preserving bifurcation. As a result of the first
symmetry breaking bifurcation, the standing wave becomes more localized in one of the two rings.
As a result of the second symmetry preserving bifurcation, the standing wave becomes localized in the central line segment.
In the limit of large norm solutions, both standing waves are represented by
a truncated solitary wave localized in either the ring or the central line segment. 
Both waves are stable local constrained minimizers of the energy for the fixed $L^2$ norm but the 
asymmetric wave supported in the ring has a smaller energy. 
The analytical results are confirmed by numerical approximations of the ground state on the dumbbell graph.
\end{abstract}

\maketitle

\section{Introduction}
\label{sec:Intro}

Nonlinear Schr\"{o}dinger (NLS) equations on quantum graphs have been recently studied
in many physical and mathematical aspects \cite{N}. In the physical literature,
mostly in the context of Bose-Einstein condensation, various types of graphs
have been modeled to show formation and trapping of standing waves \cite{[BS],[HTM],[Sob],[SSBM],[ZS]}.
In the mathematical literature, existence, variational properties, stability, and
scattering have been studied on star graphs, including the $Y$-shaped graphs
\cite{ACFN0,ACFN2,AST}.

More complicated graphs may lead to resonances and nontrivial bifurcations
of standing waves. For example, standing waves were studied on
the tadpole graph (a ring attached to a semi-infinite line) \cite{CFN,NPS}.
Besides the standing waves supported in the ring that bifurcate from eigenvalues of
the linear operators closed in the ring, the tadpole graph also admits
the standing waves localized in the ring with the tails extended
in the semi-infinite line. These standing waves bifurcate from the end-point
resonance of the linear operators defined on the tadpole graph and include the positive
solution, which is proved to be orbitally stable in the evolution of the cubic NLS equation
near the bifurcation point \cite{NPS}.
The positive solution bifurcating from the end-point resonance bears the lowest energy at the fixed $L^2$ norm,
called the ground state. Other positive states on the tadpole graph also exist in
parameter space far away from the end-point resonance but they do not bear smallest energy
and they do not branch off the ground state \cite{CFN}.

The present contribution is devoted to analysis of standing waves (and the ground state) on the dumbbell graph
shown in Figure \ref{fig1}. The dumbbell graph is represented by two rings (with equal length normalized
to $2\pi$) connected by the central line segment (with length $2L$). At the junctions
between the rings and the central line segment, we apply the
Kirchhoff boundary conditions to define the coupling.
These boundary conditions ensure continuity of functions and
conservation of the current flow through the network junctions; they also
allow for self-adjoint extension of
the Laplacian operator defined on the dumbbell graph.

\begin{figure}[htp]
\includegraphics[width=12cm]{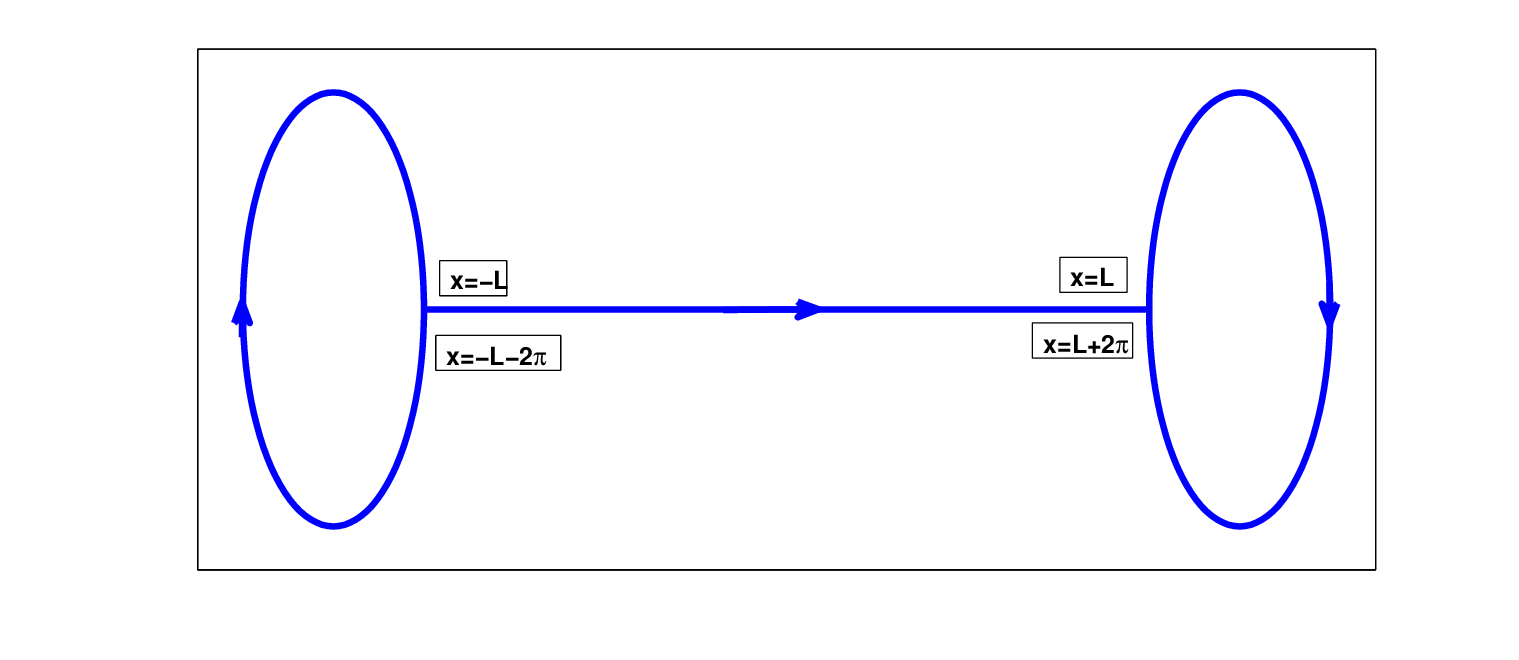}
\caption{Schematic representation of the dumbbell graph.}
\label{fig1}
\end{figure}

Let the central line segment be placed on $I_0 := [-L,L]$, whereas the end rings are placed
on $I_- := [-L-2\pi,-L]$ and $I_+ := [L,L+2\pi]$. The Laplacian operator
is defined piecewise by
\begin{equation*}
\Delta \Psi = \left[  \begin{array}{c}
u_{-}'' (x),  \  x \in I_-, \\
u_0'' (x), \ x \in I_0, \\
u_+'' (x), \ x \in I_+,
\end{array}  \right], \quad \mbox{\rm acting on} \quad \Psi = \left[  \begin{array}{c}
u_{-} (x),  \  x \in I_-, \\
u_0(x), \ x \in I_0, \\
u_+(x), \ x \in I_+,
\end{array}  \right],
\end{equation*}
subject to the Kirchhoff boundary conditions at the two junctions:
\begin{eqnarray}
\left\{ \begin{array}{l}
u_-(-L-2\pi) = u_-(-L) = u_0(-L), \\
u_-'(-L) - u_-'(-L - 2\pi) = u_0'(-L), \end{array} \right.
\label{bc-1}
\end{eqnarray}
and
\begin{eqnarray}
\left\{ \begin{array}{l}
u_+(L + 2\pi) = u_+(L) = u_0(L), \\
u_+'(L) - u_+'(L + 2\pi) = u_0'(L). \end{array} \right.
\label{bc-2}
\end{eqnarray}
The Laplacian operator $\Delta$ is equipped with the domain $\mathcal{D}(\Delta)$ given by
a subspace of $H^2(I_- \cup I_0 \cup I_+)$ closed with the boundary conditions
(\ref{bc-1}) and (\ref{bc-2}). By Theorem 1.4.4 in \cite{BK},
the Kirchhoff boundary conditions are symmetric and
the operator $\Delta$ is self-adjoint on its domain $\mathcal{D}(\Delta)$.

The cubic NLS equation on the dumbbell graph  is given by
\begin{equation}\label{NLS}
i\frac{\partial}{\partial t} \Psi = \Delta \Psi + 2 |\Psi|^{2} \Psi\ , \quad \Psi \in \mathcal{D}(\Delta),
\end{equation}
where the nonlinear term $|\Psi|^{2}\Psi$ is also defined piecewise on $I_- \cup I_0 \cup I_+$.
The energy of the cubic NLS equation (\ref{NLS}) is given by
\begin{equation}
\label{energy}
E(\Psi) = \int_{I_- \cup I_0 \cup I_+} \left( |\partial_x \Psi|^2 - |\Psi|^4 \right) dx,
\end{equation}
and it is conserved in the time evolution of the NLS equation (\ref{NLS}). The
energy is defined in the energy space $\mathcal{E}(\Delta)$ given by
\begin{equation*}
\label{energy-space}
\mathcal{E}(\Delta) := \left\{ \Psi \in H^1(I_- \cup I_0 \cup I_+) : \;\;
\begin{array}{l} u_-(-L-2\pi) = u_-(-L) = u_0(-L) \\
u_+(L + 2\pi) = u_+(L) = u_0(L) \end{array} \right\}.
\end{equation*}
Local and global wellposedness of the cubic NLS equation (\ref{NLS}) both
in energy space $\mathcal{E}(\Delta)$ and domain space $\mathcal{D}(\Delta)$
can be proved using standard techniques, see \cite{ACFN2}.

Standing waves of the focusing NLS equation (\ref{NLS}) are given by the solutions of the form
$\Psi(t,x) = e^{i\Lambda t} \Phi(x)$, where $\Lambda$ and $\Phi \in \mathcal{D}(\Delta)$ are considered to be real.
This pair satisfies the stationary NLS equation
\begin{equation}
\label{statNLS}
-\Delta \Phi - 2 |\Phi|^{2} \Phi =  \Lambda \Phi \qquad \Lambda \in \mathbb{R}\,,\;\Phi\in \mathcal{D}(\Delta).
\end{equation}
The stationary NLS equation (\ref{statNLS}) is the Euler--Lagrange equation of the energy
functional $H_{\Lambda} := E - \Lambda Q$, where the charge
\begin{equation}
\label{charge}
Q(\Psi) = \int_{I_- \cup I_0 \cup I_+} |\Psi|^2 dx
\end{equation}
is another conserved quantity in the time evolution of the NLS equation (\ref{NLS}).

We shall now define the ground state of the NLS equation on the dumbbell graph as the standing wave of
smallest energy $E$ at a fixed value of $Q$, that is, a solution of the constrained minimization
problem
\begin{equation}
\label{minimizer}
E_0 = \inf\{ E(\Psi) : \quad  \Psi \in \mathcal{E}(\Delta), \quad Q(\Psi) = Q_0\}.
\end{equation}

By Theorem 1.4.11 in \cite{BK}, although the energy space $\mathcal{E}(\Delta)$ is only defined by the continuity boundary conditions,
the Kirchhoff boundary conditions for the derivatives are natural boundary conditions
for critical points of the energy functional $E(\Psi)$ in the space $\mathcal{E}(\Delta)$.
In other words, using test functions and the
weak formulation of the Euler--Lagrange equations in the energy space $\mathcal{E}(\Delta)$,
the derivative boundary conditions are also obtained
in addition to the continuity boundary conditions. By bootstrapping arguments, we conclude that
any critical point of the energy functional $H_{\Lambda}$ in $\mathcal{E}(\Delta)$ is also
a solution of the stationary NLS equation (\ref{statNLS}) in $\mathcal{D}(\Delta)$.
On the other hand, solutions of the stationary NLS equation (\ref{statNLS}) in $\mathcal{D}(\Delta)$
are immediately the critical points of the energy functional $H_{\Lambda}$. Therefore,
the set of standing wave solutions of the stationary NLS equation (\ref{statNLS})
is equivalent to the set of critical points of the energy functional $H_{\Lambda}$.

Non-existence of ground states on graphs was proved by Adami {\em et al.} \cite{AST}
for some non-compact graphs. For example, the graph consisting of one ring connected to
two semi-infinite lines does not have a ground state. On the other hand, the tadpole
graph with one ring and one semi-infinite line escapes the non-existence condition of \cite{AST}
and has a ground state, in agreement with the results of \cite{NPS}.  Because $I_- \cup I_0 \cup I_+$ is compact,
existence of the global constrained minimizer in (\ref{minimizer}) follows from the standard
results in calculus of variations. As a minimizer of the energy functional $H_{\Lambda}$,
the ground state is orbitally stable in the time evolution of the NLS equation (\ref{NLS}), see for instance \cite{GSS1}.
The main question we would like to answer is how the ground state looks like on the dumbbell graph
depending on the parameter $Q_0$ for the charge $Q$. Until now, no rigorous analysis
of the NLS equation (\ref{NLS}) on a compact graph has been developed. On the other hand,
ground states on compact intervals subject to Dirichlet or periodic boundary conditions
have been considered in the literature \cite{Carr,FHK}.

The dumbbell graph resembles the geometric configuration that arises typically in the double-well potential
modeled by the Gross--Pitaevskii equation \cite{FS,GMW,KPP,MW,PP}. From this analogy, one can anticipate
that the ground state is a symmetric state distributed everywhere in the graph in the limit of small values of $Q_0$ but
it may become an asymmetric standing wave residing in just one ring as a result of a pitchfork bifurcation
for larger values of $Q_0$. We show in this paper that this intuitive picture is correct.

We show that the ground state is indeed represented by a constant (symmetric) solution
for small values of $Q_0$. For larger values of $Q_0$, the constant solution
undertakes two instability bifurcations. At the first bifurcation associated
with the anti-symmetric perturbation, a family of positive asymmetric standing waves
is generated. The asymmetric wave has the lowest energy at
the fixed $Q_0$ near the symmetry breaking bifurcation.
At the second bifurcation associated with the symmetric perturbation of the constant solution,
another family of positive symmetric standing waves is generated.
The symmetric wave does not have the lowest energy at the fixed $Q_0$
near the bifurcation and retains this property in the limit of large $Q_0$, 
though it does become a stable local constrained minimizer of the energy.
It is rather surprising that both the precedence of the symmetry-breaking bifurcation of the constant solution
and the appearance of the asymmetric wave as a ground state in the limit of large $Q_0$
do not depend on the value of the length parameter $L$ relative to $\pi$.

Our main result is formulated as the following two theorems. We also include numerical approximations
of the standing waves of the stationary NLS equation (\ref{statNLS}) in order
to illustrate the main result. The numerical work relies on
the Petviashvili's and Newton's iterative methods which are commonly used
for approximation of standing waves of the NLS equations \cite{Pelin-book,Yang-book}.

\begin{theorem}
There exist $Q_0^*$ and $Q_0^{**}$ ordered as $0 < Q_0^* < Q_0^{**} < \infty$
such that the ground state of the constrained minimization problem (\ref{minimizer}) for $Q_0 \in (0,Q_0^*)$
is given (up to an arbitrary rotation of phase) by the constant solution of the stationary NLS equation (\ref{statNLS}):
\begin{equation}
\label{ground-1}
\Phi(x) = p, \quad \Lambda = -2p^2, \quad Q_0 = 2 (L + 2\pi) p^2.
\end{equation}
The constant solution undertakes the symmetry breaking bifurcation at $Q_0^*$
and the symmetry preserving bifurcation at $Q_0^{**}$, which result in
the appearance of new positive non-constant solutions.
The asymmetric standing wave is a ground state of (\ref{minimizer}) for $Q \gtrsim Q_0^*$
but the symmetric standing wave is not a ground state of (\ref{minimizer}) for $Q \gtrsim Q_0^{**}$.
\label{theorem-graph}
\end{theorem}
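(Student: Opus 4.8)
\emph{Set-up and linearization.} I would take $\Phi$ real and read (\ref{statNLS}) as the zero set of $F(w,p):=-\Delta(p+w)-2(p+w)^{3}+2p^{2}(p+w)$, whose trivial branch $w=0$ is the constant family (\ref{ground-1}) with $\Lambda=-2p^{2}$. The linearization along this branch is $D_{w}F|_{w=0}=-\Delta-6p^{2}-\Lambda=-\Delta-4p^{2}=:L_{+}(p)$, while the second variation of $H_\Lambda$ in the imaginary direction is $L_{-}=-\Delta-2p^{2}-\Lambda=-\Delta$. Since $L_{-}=-\Delta\ge0$ has only the constant in its kernel (the phase mode), the constrained Hessian is governed by $L_{+}(p)$ on the charge-tangent space $\{u:\int u=0\}$, where it has eigenvalues $\mu_{n}-4p^{2}$ with $0=\mu_{0}<\mu_{1}\le\mu_{2}\le\cdots$ the eigenvalues of $-\Delta$ on the graph. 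Thus every bifurcation off the constant, and every change of its constrained Morse index, happens exactly where $4p^{2}$ meets an eigenvalue $\mu_{n}$, and the whole theorem is reduced to locating $\mu_{1}$ and the smallest \emph{symmetric} eigenvalue.

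\emph{Spectrum of $-\Delta$ and the $L$-independent ordering.} Splitting $-u''=\omega^{2}u$ by the reflection $x\mapsto-x$ (which exchanges the rings), solving on each edge and imposing (\ref{bc-1})--(\ref{bc-2}), I get the dispersion relations $2\tan(\omega L)\tan(\pi\omega)=1$ in the antisymmetric sector and $\tan(\omega L)+2\tan(\pi\omega)=0$ in the symmetric sector. The key point, and the reason the picture is independent of $L$ relative to $\pi$, is that on $(0,\omega_{*})$ with $\omega_{*}:=\min\{\pi/(2L),\tfrac12\}$ both $\tan(\omega L)$ and $\tan(\pi\omega)$ are positive and increasing: the antisymmetric relation therefore has exactly one root $\omega_{a}\in(0,\omega_{*})$, while the symmetric left-hand side is strictly positive and has none. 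Hence $\mu_{1}=\omega_{a}^{2}$ is simple with an antisymmetric eigenfunction, and every positive symmetric eigenvalue exceeds $\omega_{*}^{2}>\mu_{1}$. Denoting by $\mu_{s}$ the first symmetric eigenvalue, I set $Q_{0}^{*}:=\tfrac12(L+2\pi)\mu_{1}$ and $Q_{0}^{**}:=\tfrac12(L+2\pi)\mu_{s}$, so that $0<Q_{0}^{*}<Q_{0}^{**}<\infty$ for every $L>0$; these are the charges at which $4p^{2}=\mu_{1}$ and $4p^{2}=\mu_{s}$. A further check that the antisymmetric relation has no root in $(\omega_{*},\sqrt{\mu_{s}})$ confirms that the symmetry-preserving bifurcation is indeed the second one.

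\emph{Ground state for small charge and the two bifurcations.} For $Q_{0}<Q_{0}^{*}$ we have $4p^{2}<\mu_{1}$, so $L_{+}|_{\{\int u=0\}}>0$ and the constant is a strict constrained local minimizer. To obtain \emph{global} minimality I would write a competitor as $\Psi=m+\phi$ with $\int\phi=0$ and use $\|\phi'\|_{2}^{2}\ge\mu_{1}\|\phi\|_{2}^{2}$ and $m^{2}\le Q_{0}/(2L+4\pi)$ to get $E(\Psi)-E(\text{const})\ge(\mu_{1}-4m^{2})\|\phi\|_{2}^{2}-C(\|\phi\|_{3}^{3}+\|\phi\|_{4}^{4})$, which is nonnegative once $4m^{2}<\mu_{1}$, i.e.\ $Q_{0}<Q_{0}^{*}$, provided $\|\phi\|_{H^{1}}$ is small; existence and compactness of the minimizing problem on the compact graph then close the remaining range. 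At $Q_{0}=Q_{0}^{*}$ the operator $L_{+}$ has a simple antisymmetric kernel $\phi_{1}$ and the eigenvalue $\mu_{1}-4p^{2}$ crosses zero transversally, since $\tfrac{d}{dp}(\mu_{1}-4p^{2})=-8p\ne0$. By Crandall--Rabinowitz a branch of nonconstant solutions bifurcates, and the $\mathbb{Z}_{2}$ reflection equivariance (with $\phi_{1}$ odd) makes it a pitchfork of asymmetric, symmetry-breaking waves; these are positive near the bifurcation because $p^{*}+a\phi_{1}>0$ for small $a$. The same theorem at $Q_{0}=Q_{0}^{**}$, where the kernel is the simple symmetric eigenfunction, produces a branch of positive symmetric, symmetry-preserving waves.

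\emph{Identifying the ground state, and the main obstacle.} For $Q_{0}\gtrsim Q_{0}^{*}$ one has $\mu_{1}-4p^{2}<0$, so $\langle L_{+}\phi_{1},\phi_{1}\rangle<0$ with $\int\phi_{1}=0$ is a constrained descent direction: the constant has Morse index one and is not the minimizer. A minimizer nevertheless exists, is positive, and---by continuity of the constrained minimizers as $Q_{0}\downarrow Q_{0}^{*}$---lies near the bifurcation constant $p^{*}=\tfrac12\sqrt{\mu_{1}}$, hence inside the local picture of Crandall--Rabinowitz, where the only nonconstant solutions form the asymmetric branch (the symmetric branch has not yet appeared). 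Therefore the ground state \emph{is} the asymmetric wave for $Q_{0}\gtrsim Q_{0}^{*}$, and in particular the pitchfork is supercritical. For $Q_{0}\gtrsim Q_{0}^{**}$ we have $4p^{2}=\mu_{s}>\mu_{1}$, so the constant---and, since it is a symmetric function $L^{2}$-orthogonal to the antisymmetric sector, the nearby symmetric wave---still carries a negative eigenvalue in the antisymmetric part of its Hessian; thus the symmetric wave has Morse index at least one and cannot be even a local minimizer, let alone the ground state. The main obstacle is exactly the two \emph{global} minimality assertions: controlling the minimizer uniformly over $(0,Q_{0}^{*})$ and ensuring it stays near the constant as $Q_{0}$ crosses $Q_{0}^{*}$, so that the local bifurcation analysis captures it; this is where the energy estimate must be combined with compactness and continuation of the minimizing family. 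Once that is in place, the otherwise delicate sign (direction) computation of the pitchfork is avoided, since the genuine minimizer is shown a posteriori to lie on the asymmetric branch.
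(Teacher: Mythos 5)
Your set-up and several components do match the paper: the reduction to $L_+=-\Delta-4p^2$, $L_-=-\Delta$ along the constant branch, the identification of the bifurcation charges $Q_0^*=\tfrac12(L+2\pi)\Omega_1^2$ and $Q_0^{**}=\tfrac12(L+2\pi)\omega_1^2$, the $L$-independent ordering of the antisymmetric and symmetric spectra (your monotonicity argument on $\left(0,\min\{\pi/(2L),\tfrac12\}\right)$ is a clean substitute for the orderings (\ref{order-1})--(\ref{order-2}) in Proposition \ref{prop-spectrum}), the Morse-index argument showing the constant is a constrained saddle past $Q_0^*$ (Lemma \ref{lemma-2}), and the exclusion of the symmetric wave near $Q_0^{**}$ via the surviving antisymmetric negative direction (Remark \ref{remark-4}). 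The genuine gap is the step you yourself call ``the main obstacle'': identifying the ground state for $Q_0\gtrsim Q_0^*$ with the asymmetric Crandall--Rabinowitz branch. Your argument needs (i) that the constant is the unique ground state at $Q_0=Q_0^*$, and (ii) that global minimizers vary continuously there, so that for $Q_0$ slightly above $Q_0^*$ a minimizer stays inside the local bifurcation neighborhood. Neither is available by soft arguments: compactness gives only subsequential convergence of minimizers to \emph{some} minimizer at $Q_0^*$, and at $Q_0=Q_0^*$ the constant is a degenerate constrained critical point (the Hessian vanishes on the antisymmetric mode), so its strict minimality and uniqueness there are governed by exactly the quartic normal-form coefficient you are trying not to compute; a far-away minimizer at $Q_0^*$, and hence for $Q_0\gtrsim Q_0^*$, is not excluded. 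For the same reason your a posteriori deduction that the pitchfork is supercritical is not yet a proof: were the branch subcritical, asymmetric waves near the constant would not even exist for $Q_0\gtrsim Q_0^*$, and nothing in your argument rules this out.

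This is precisely where the paper invests its technical work (Lemma \ref{lemma-3}): it solves the corrector equation (\ref{solvability-0}) in closed trigonometric form, uses the eigenvalue relation (\ref{eigenvalue-Omega-1}) (in the form (\ref{trascendental-expr-2})) together with the bound $2L\Omega_1<\pi$ of Proposition \ref{prop-root} to prove that the normal-form coefficient (\ref{positomega}) is negative, i.e.\ $\Omega<0$; it then shows that the zero eigenvalue of $L_+$ moves to $-2\Omega a^2>0$ along the branch, and verifies the Shatah--Strauss/Weinstein slope condition through the explicit computation $I+II+III<0$, which simultaneously gives supercriticality in $Q_0$ and the removal of the single negative eigenvalue of $L_+$ in the constrained space $L^2_c$. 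Your small-charge assertion has a milder version of the same defect: the energy expansion with the Poincar\'e inequality yields only local minimality of the constant for $Q_0<Q_0^*$, and an a priori bound forces a global minimizer to be close to constant only for \emph{small} $Q_0$; ``compactness closes the remaining range'' is not an argument for charges just below $Q_0^*$, where the paper instead relies on the slope condition of Lemma \ref{lemma-2}. In short, the skeleton is right and largely parallel to the paper's, but the two assertions carrying the theorem's content rest on an unproven continuity/uniqueness principle for constrained minimizers, whose honest substitute is the explicit bifurcation computation the paper performs.
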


\begin{theorem}
In the limit of large negative $\Lambda$, there exist two standing wave solutions of the stationary NLS equation (\ref{statNLS}).
One solution is a positive asymmetric wave localized in the ring:
\begin{equation}
\label{ground-2}
\Phi(x) = |\Lambda|^{1/2} {\rm sech}(|\Lambda|^{1/2} (x - L - \pi)) + \tilde{\Phi}(x), \quad
Q_0 = 2 |\Lambda|^{1/2} + \tilde{Q}_0,
\end{equation}
and the other solution is a positive symmetric wave localized in the central line segment:
\begin{equation}
\label{ground-3}
\Phi(x) = |\Lambda|^{1/2} {\rm sech}(|\Lambda|^{1/2} x) + \tilde{\Phi}(x), \quad Q_0 = 2 |\Lambda|^{1/2} + \tilde{Q}_0,
\end{equation}
where $\| \tilde{\Phi} \|_{H^2(I_- \cup I_0 \cup I_+)} \to 0$ and $|\tilde{Q}_0| \to 0$ as $\Lambda \to -\infty$ in both cases.
The symmetric wave satisfying (\ref{ground-3}) is a local constrained minimizer of the energy for sufficiently large $Q_0$, 
but the energy of the asymmetric wave satisfying (\ref{ground-2}) is smaller.
\label{theorem-limit}
\end{theorem}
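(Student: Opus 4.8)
The plan is to split the statement into an existence part --- that the two profiles in (\ref{ground-2}) and (\ref{ground-3}) are genuine solutions of (\ref{statNLS}) for $\Lambda \to -\infty$ --- and a variational part --- that the symmetric one is the minimizer of (\ref{minimizer}) for large $Q_0$. For existence I would set $\kappa := |\Lambda|^{1/2}$ and rescale $\Phi(x) = \kappa\,\phi(\kappa x)$, so that the unperturbed profile $\phi(y) = {\rm sech}(y)$ solves $-\phi'' - 2\phi^3 = -\phi$ on the line while the rescaled edges acquire lengths proportional to $\kappa\to\infty$; in these variables the junctions (\ref{bc-1})--(\ref{bc-2}) sit at a distance from the soliton core that diverges, so placing the profile at the centre of the segment or at the antipode $x = L+\pi$ of a ring already solves the equation up to a residual that is exponentially small in $\kappa$ by the exponential localization of ${\rm sech}$. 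Writing $\Phi = \Phi_{\rm app} + \tilde\Phi$, I would solve for $\tilde\Phi$ by the implicit function theorem. The one delicate point is invertibility of the linearization $-\partial_x^2 - 6\Phi_{\rm app}^2 - \Lambda$: on the line its kernel is the translational mode $\Phi_{\rm app}'$, odd about the soliton centre, so restricting to the subspace invariant under the reflection fixing that centre (the reflection $x\mapsto -x$ for (\ref{ground-3}), the reflection of one ring about its antipode for (\ref{ground-2})) removes the kernel, and the rest of the spectrum stays bounded away from $0$ under the exponentially small graph perturbation; taking $\Phi$ real disposes of the phase mode. A uniform bound on the inverse and the smallness of the residual give $\|\tilde\Phi\|_{H^2}\to 0$, and integrating yields $Q_0 = 2\kappa + \tilde Q_0$ with $\tilde Q_0\to 0$.

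For the variational part I would use that any solution of (\ref{statNLS}), tested against $\Phi$ with the Kirchhoff conditions discarding the boundary terms, satisfies the identity $E(\Phi) = \int_{I_-\cup I_0\cup I_+}\Phi^4\,dx + \Lambda\,Q_0$. Comparing two solutions at the \emph{same} $Q_0$ but with slightly different parameters $\kappa_A,\kappa_B$ (each fixed by $Q_0 = 2\kappa + \tilde Q_0(\kappa)$), the contributions of the shift $\kappa_A - \kappa_B$ cancel between $\int\Phi^4 \approx \tfrac43\kappa^3$ and $-\kappa^2 Q_0$, because $\kappa = Q_0/2$ is a critical point of $\kappa\mapsto \tfrac43\kappa^3 - Q_0\kappa^2$. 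The upshot is the clean reduction $E_A - E_B \approx \int\Phi_A^4\,dx - \int\Phi_B^4\,dx$ with both quartic integrals evaluated at the common $\kappa = Q_0/2$: \emph{the ground state among the candidates is the wave with the smaller $\int\Phi^4$}, and the leading term $-\tfrac{1}{12}Q_0^3$ is common to all of them.

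It remains to show that the minimizer is one of the two constructed waves and that it is the symmetric one. Existence of a minimizer is guaranteed by compactness. For $Q_0$ large I would argue, via a Gagliardo--Nirenberg inequality on the graph, that the minimizer must concentrate into a single bump; a concentrated positive state at large $|\Lambda|$ is then either vertex-centred --- a Kirchhoff-star profile carrying ${\rm sech}$ on each of three edges, whose energy is $-\tfrac{1}{27}Q_0^3 + \dots > -\tfrac{1}{12}Q_0^3$ and is therefore excluded --- or one of the interior-edge waves (\ref{ground-2}), (\ref{ground-3}). Thus the competition is exactly the comparison of $\int\Phi^4$ for these two.

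The heart of the matter, and the step I expect to be the main obstacle, is computing the exponentially small corrections to $\int\Phi^4$ for the two interior-edge waves and showing their difference has a fixed sign. I would do this by matched asymptotics: each wave imposes an effective Robin condition $\Phi' = \beta\,\Phi$ at the junction, with $\beta$ fixed by the Kirchhoff matching to the nearly empty neighbouring edges. The two geometries are genuinely different --- a ring closes into a loop, so its two tails return to the same vertex and force $\beta$ smaller than the free-soliton value (a fatter tail, hence a larger $\int\Phi^4$), whereas the central segment opens into two edges and forces $\beta$ larger in modulus (a steeper tail, hence a smaller $\int\Phi^4$). Because these corrections have \emph{opposite} signs, the inequality $\int\Phi^4_{(\ref{ground-3})} < \int\Phi^4_{(\ref{ground-2})}$, and therefore $E_{(\ref{ground-3})} < E_{(\ref{ground-2})}$, holds regardless of the exponential magnitudes and hence regardless of whether $L \gtrless \pi$; this is precisely the source of the surprising $L$-independence. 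Making the matched-asymptotic remainders rigorous and uniform, so that the leading correction genuinely controls the sign for every $L>0$, is the delicate part; the concentration argument and the exclusion of the vertex-centred and spread-out competitors are comparatively standard.
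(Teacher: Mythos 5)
Your architecture (rescale, place a soliton, correct by an implicit-function argument, then compare energies through $E(\Phi)=\int\Phi^4\,dx+\Lambda Q_0$) is genuinely different from the paper's proof, and parts of it are attractive. For existence the paper does \emph{not} use a symmetric-subspace IFT for the ring-centred wave: it asserts that no spatial symmetry is available there, and instead represents the solution exactly by a Jacobi dnoidal profile on the ring (Lemma \ref{lemma-soliton-ring}), solves on $J_-\cup J_0$ by a contraction with one vertex condition dropped, and recovers that condition by shooting in the elliptic modulus $k$; the segment wave is handled the same way with a cnoidal profile (Lemma \ref{remark-soliton-line}). Your observation that the reflection of one ring about its antipode, extended by the identity on the other edges, is an automorphism of the dumbbell compatible with (\ref{bc-1})--(\ref{bc-2}) is correct (continuity is preserved and both derivative terms change sign, so the flux condition is invariant), and it does remove the translational near-kernel; so your route is viable in principle and simpler than the paper's, provided you also construct an approximation satisfying the vertex conditions exactly --- a bare ${\rm sech}$ does not (the flux mismatch is $O(e^{-\pi\mu})$), and without this the correction does not lie in the operator domain and the contraction cannot be run (the paper fixes the analogous issue with polynomial correctors in Lemma \ref{lemma-soliton-line}). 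Likewise, your reduction of the energy comparison to the corrections of $\int\Phi^4$ at the common value $\kappa=Q_0/2$, and your exclusion of the vertex-centred competitor at $-\tfrac{1}{27}Q_0^3$, are correct and mirror (in different variables) the paper's use of $\frac{dE_0}{dQ_0}=\Lambda$ in Lemma \ref{lemma-limit}.

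There remain, however, two genuine gaps. First, positivity, which is part of the statement, is never addressed and is \emph{not} accessible to your estimates: in the tails the wave itself is exponentially small (of size $e^{-\pi\mu}$, resp. $e^{-L\mu}$, at the vertices), which is no larger than the residual and correction bounds an $L^2$-based fixed-point argument produces, so $\|\tilde\Phi\|_{H^2}\to 0$ does not prevent $\Phi$ from vanishing or changing sign near the junctions. The paper makes exactly this point in the remark after Lemma \ref{lemma-soliton-line}, and this is precisely why it switches to the elliptic construction: there the core is positive exactly, the tail approximations (\ref{approximation-G-minus})--(\ref{approximation-G-0}) are positive of size $O(e^{-\pi\mu})$, and the correction is cubically smaller, $O(e^{-3\pi\mu})$, so positivity survives. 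Second, and centrally, the sign of the exponentially small energy difference --- which you yourself identify as the heart of the matter --- is only argued by a matched-asymptotics heuristic about ``fatter'' versus ``steeper'' tails. Your claimed signs do agree with the paper's rigorous result, $Q_0^{\rm asym}=2|\Lambda|^{1/2}+\tfrac{16}{3}\pi|\Lambda|e^{-2\pi|\Lambda|^{1/2}}+\cdots$ and $Q_0^{\rm sym}=2|\Lambda|^{1/2}-\tfrac{16}{3}L|\Lambda|e^{-2L|\Lambda|^{1/2}}+\cdots$, but an unproved sign of an exponentially small quantity is exactly the kind of claim that cannot be waved through; the paper obtains it only because the exact dnoidal/cnoidal representations reduce it to expansions of complete elliptic integrals. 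Relatedly, your classification step (that a concentrated minimizer must coincide with one of the two constructed edge-centred waves) needs a local uniqueness argument and a reason why bumps centred at other points cannot occur; the paper hedges this point with the additional spectral analysis of Lemma \ref{lemma-eigenvalue} (positivity of the second eigenvalue of $L_+$ via the substitution $U=a\Psi'$ and a vertex-jump computation), for which your proposal has no analogue. Until the sign computation and positivity are supplied, the theorem is not proved.
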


\begin{remark}
It follows from Lemmas \ref{lemma-2}, \ref{lemma-3} and Remark \ref{remark-4}
that the constant standing wave (\ref{ground-1})
undertakes a sequence of bifurcations, where the first two bifurcations
at $Q_0^*$ and $Q_0^{**}$ lead to the positive asymmetric and symmetric standing waves respectively.
We also show numerically that these same positive waves are connected to the truncated solitary waves
(\ref{ground-2}) and (\ref{ground-3}) as $\Lambda \to -\infty$.  See Figures \ref{stat2}, \ref{stat3},
\ref{statcomp}, and \ref{stat8}.
\end{remark}

\begin{remark}
We show in Lemma \ref{lemma-limit} that the energy difference between the truncated
solitary wave localized in the central segment and
the one localized in the ring is exponentially small as $\Lambda \to -\infty$ 
but the energy of the asymmetric wave is smaller than the energy of the symmetric wave. 
In the numerical iterations
of the Petviashvili's and Newton's methods, both
standing waves arise naturally when the initial data is concentrated either in a loop or in a central link.
Figures \ref{stat9} and \ref{stat10} illustrate that both symmetric and asymmetric waves are local 
constrained minimizers of energy, so that they are orbitally stable in the time evolution of the cubic NLS equation (\ref{NLS}).
\end{remark}

The paper is organized as follows. Section 2 reports a complete characterization of the
linear spectrum of the Laplacian operator
on the dumbbell graph. Section 3 is devoted to the analytical characterization of the constant standing
wave (\ref{ground-1}) and the first two instability bifurcations when parameter $Q_0$ is increased.
Section 4 describes the analytical characterization of the two
standing waves localized in the central segment and at one of the two rings in the limit of large values
of $Q_0$. The proofs of Theorems \ref{theorem-graph} and \ref{theorem-limit}
are furnished by the individual results of Sections 3 and 4.
Section 5 reports numerical approximations of the standing waves of the stationary NLS equation (\ref{statNLS}).

\section{Linear spectrum of the Laplacian on the dumbbell graph}

The linear spectrum of the Laplacian on the dumbbell graph is defined by solutions of the spectral problem
\begin{equation}
\label{spectrum}
-\Delta U = \lambda U,  \quad \lambda \in \mathbb{R}, \quad U \in \mathcal{D}(\Delta).
\end{equation}
Because $I_- \cup I_0 \cup I_+$ is compact, the spectrum of $-\Delta$ is purely discrete. Let us denote it by $\sigma(-\Delta)$.
Because $\Delta$ is self-adjoint with the domain $\mathcal{D}(\Delta)$ in $L^2(I_- \cup I_0 \cup I_+)$,
the spectrum $\sigma(-\Delta)$ consists of real positive eigenvalues of equal algebraic and geometric multiplicities.
The distribution of eigenvalues is given by the following result.

\begin{proposition}
\label{prop-spectrum}
$\sigma(-\Delta)$ consists of a simple zero eigenvalue with the constant eigenfunction
and the union of the following three countable sequences of eigenvalues:
\begin{itemize}
\item A sequence of double eigenvalues $\{ n^2 \}_{n \in \mathbb{N}}$.
The corresponding eigenfunctions are compactly supported on either $I_-$ or $I_+$ and
are odd with respect to the middle point in $I_{\pm}$.

\item A sequence of simple eigenvalues $\{ \omega_n^2 \}_{n \in \mathbb{N}}$, where $\omega_n$ is  given by
a positive root of the transcendental equation
\begin{equation}
\label{transc-1}
D_L^{({\rm even})}(\omega) := 2 \tan(\omega \pi) + \tan(\omega L) = 0.
\end{equation}
The corresponding eigenfunctions are distributed everywhere in $I_- \cup I_0 \cup I_+$ and are
even with respect to the middle point in $I_0$.

\item A sequence of simple eigenvalues $\{ \Omega_n^2 \}_{n \in \mathbb{N}}$, where $\Omega_n$ is given by
a positive root of the transcendental equation
\begin{equation}
\label{transc-2}
D_L^{({\rm odd})}(\omega) := 2 \tan(\omega \pi) - \cot(\omega L) = 0.
\end{equation}
The corresponding eigenfunctions are distributed everywhere in $I_- \cup I_0 \cup I_+$ and are
odd with respect to the middle point in $I_0$.
\end{itemize}
\end{proposition}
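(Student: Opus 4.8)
The plan is to solve the eigenvalue problem \eqref{spectrum} edge by edge, exploit the reflection symmetry of the graph, and reduce everything to two scalar secular equations. First I would record that since $-\Delta$ is self-adjoint with quadratic form $\langle -\Delta U, U\rangle = \int_{I_-\cup I_0\cup I_+} |U'|^2\,dx \ge 0$, every eigenvalue is nonnegative, so I may write $\lambda=\omega^2$ with $\omega\ge 0$. The case $\omega=0$ is immediate: an affine function on each edge that is continuous and satisfies the Kirchhoff conditions on the compact graph must be globally constant, giving the simple zero eigenvalue with constant eigenfunction. For $\omega>0$ the general solution on each edge is a combination of $\cos(\omega x)$ and $\sin(\omega x)$, producing six constants to be fixed by the six matching conditions (two continuity and one Kirchhoff derivative condition at each junction).

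Rather than attack the resulting $6\times6$ secular determinant directly, I would use that the reflection $R:x\mapsto -x$ (which interchanges $I_-$ and $I_+$ and reflects $I_0$ about its midpoint $x=0$) commutes with $-\Delta$. Hence every eigenfunction splits into an even and an odd part under $R$, each again an eigenfunction, and it suffices to enumerate the even and odd sectors separately. In the even sector $u_0$ is even, so $u_0(x)=a\cos(\omega x)$ with the Neumann condition $u_0'(0)=0$ built in; in the odd sector $u_0(x)=a\sin(\omega x)$ with the Dirichlet condition $u_0(0)=0$. In both sectors the left ring is the $R$-image of the right ring, so the problem collapses to the half-graph consisting of $[0,L]$ joined to the single ring $I_+$.

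The key structural observation is about the ring itself. Continuity forces $u_+(L)=u_+(L+2\pi)$, i.e.\ the two endpoints of the loop carry equal values; writing $u_+$ in the midpoint coordinate $y=x-L-\pi\in[-\pi,\pi]$ as $P\cos(\omega y)+S\sin(\omega y)$, equality of the endpoint values gives $2S\sin(\omega\pi)=0$. When $\sin(\omega\pi)\neq0$ this forces $S=0$, so the loop profile is even about its midpoint; I would then impose the remaining value-matching and Kirchhoff conditions at $x=L$ to obtain a $2\times2$ homogeneous system in $(a,P)$. Dividing out $\cos(\omega\pi)\cos(\omega L)$ in the even sector and $\cos(\omega\pi)\sin(\omega L)$ in the odd sector, its solvability condition reduces exactly to $2\tan(\omega\pi)+\tan(\omega L)=0$ and $2\tan(\omega\pi)-\cot(\omega L)=0$, that is, to $D_L^{({\rm even})}(\omega)=0$ and $D_L^{({\rm odd})}(\omega)=0$; the corresponding solution $(a,P)$ is one-dimensional, so each such eigenvalue is simple, and since $\tan(\omega L)+\cot(\omega L)$ never vanishes the two secular functions share no common root. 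When instead $\sin(\omega\pi)=0$, i.e.\ $\omega=n\in\mathbb{N}$, the component $S\sin(n(x-L-\pi))$ vanishes at both junctions and contributes nothing to the Kirchhoff sum $u_+'(L)-u_+'(L+2\pi)$; it therefore decouples from the central segment and yields an eigenfunction of eigenvalue $n^2$ supported in a single loop and odd about its midpoint. One such mode lives in each ring, giving the double eigenvalue $n^2$.

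Finally, completeness comes for free from self-adjointness on the compact graph: the spectrum is discrete and the eigenfunctions are complete, and since every eigenfunction is the sum of its $R$-even and $R$-odd parts, the even/odd enumeration above is exhaustive. The main obstacle I anticipate is the careful bookkeeping at the degenerate parameter values where the divisions above are illegitimate, namely $\cos(\omega\pi)=0$, $\cos(\omega L)=0$ or $\sin(\omega L)=0$. At these points one must check directly that no extra distributed eigenvalue is created (for instance $\omega=n+\tfrac{1}{2}$ is a pole, not a root, of $D_L^{({\rm even})}$ and forces the trivial solution), and that only at the resonant lengths where $\sin(nL)=0$ or $\cos(nL)=0$ can a transcendental root collide with an integer $\omega=n$, in which case the stated sequences overlap and the multiplicities simply add. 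Establishing that away from these nongeneric lengths the three sequences are disjoint and carry exactly the stated multiplicities is the one place where the argument needs genuine case analysis rather than the clean symmetry reduction.
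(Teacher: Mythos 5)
Your proposal is correct and takes essentially the same route as the paper's proof: both reduce the problem via the reflection symmetry to even/odd sectors with $u_0$ a pure cosine or sine, use continuity of the two loop endpoint values to force either $\sin(\omega\pi)=0$ (yielding the decoupled, ring-supported modes odd about the loop midpoint at $\lambda=n^2$) or a loop profile even about its midpoint, and then eliminate the remaining constants from the Kirchhoff conditions to obtain exactly the secular equations \eqref{transc-1} and \eqref{transc-2}. Your extra bookkeeping (nonnegativity via the quadratic form, the degenerate divisions, and the resonant lengths where a transcendental root collides with an integer) only makes explicit what the paper defers to its remark on resonances following the proposition.
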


\begin{proof}
Let us decompose $U$ in the components $\{u_-,u_0,u_+\}$ defined on $\{I_-,I_0,I_+ \}$ respectively.
We first observe the following reduction of the spectral problem (\ref{spectrum}): if $u_0$ is identically
zero, then $u_+$ and $u_-$ are uncoupled, and each satisfies the over-determined boundary-value problem
\begin{equation}
\label{spectrum-1}
\left\{ \begin{array}{l} -u_{\pm}''(x) = \lambda u_{\pm}(x),  \quad x \in I_{\pm}, \\
u_{\pm}(x) \in H^2_{\rm per,0}(I_{\pm}), \end{array} \right.
\end{equation}
where $H^2_{\rm per,0}(I_{\pm})$ denotes the subspace of $H^2_{\rm per}(I_{\pm})$ subject to the additional
Dirichlet boundary conditions at the end points of $I_{\pm}$. The over-determined problem (\ref{spectrum-1})
can be solved in the space of functions which are odd with respect to the middle point in $I_{\pm}$.
In this way, a complete set of solutions of the boundary--value problem (\ref{spectrum-1})
is given by the set of double eigenvalues $\{ n^2 \}_{n \in \mathbb{N}}$ with two linearly independent eigenfunctions
\begin{equation}
\label{eigenvalues-1}
u_+(x) = \sin[n(x-L-\pi)], \;\; u_-(x) = 0
\end{equation}
and
\begin{equation}
\label{eigenvalues-2}
u_+(x) = 0, \;\; u_-(x) = \sin[n(x+L+\pi)].
\end{equation}

We next consider other solutions of the spectral problem (\ref{spectrum}), for which $u_0$ is not identically zero.
By the parity symmetry on $I_- \cup I_0 \cup I_+$, the eigenfunctions are either even or odd with respect to the middle point in $I_0$.
Since $\sigma(-\Delta)$ consists of real positive eigenvalues of equal algebraic and geometric multiplicities,
we parameterize $\lambda = \omega^2$. For even functions, we normalize the eigenfunction $U$ by
\begin{equation}
\label{eigenvalues-3}
u_0(x) = \cos(\omega x), \quad x \in I_0, \quad u_-(-x) = u_+(x), \quad x \in I_+.
\end{equation}
The most general solution of the differential equation (\ref{spectrum}) on $I_+$ is given by
\begin{equation}
\label{eigenvalues-4}
u_+(x) = A \cos[\omega (x - L - \pi)] + B \sin[\omega (x-L-\pi)],
\end{equation}
where the coefficients $A$ and $B$, as well as the spectral parameter $\omega$ is found from
the Kirchhoff boundary conditions in (\ref{bc-2}). However, since $u_+(L+2\pi) = u_+(L)$,
we have $B \neq 0$ if and only if $\sin(\pi \omega) = 0$. This condition is satisfied for $\omega = n \in \mathbb{N}$,
when  the uncoupled eigenfunction (\ref{eigenvalues-1}) arise for the $B$-term of the decomposition (\ref{eigenvalues-4}).
Therefore, without loss of generality, we can consider
other eigenfunctions by setting $B = 0$. Then, the Kirchhoff boundary conditions (\ref{bc-2}) yield
the constraints
\begin{align*}
\left\{ \begin{array}{l}
A \cos(\omega \pi) = \cos(\omega L),  \\
2 A \omega \sin(\omega \pi) = -\omega \sin(\omega L). \end{array} \right.
\end{align*}
Eliminating $A := \frac{\cos(\omega L)}{\cos(\omega \pi)}$, we obtain the dispersion relation (\ref{transc-1}) that admits
a countable set of positive roots $\{ \omega_n \}_{n \in \mathbb{N}}$ in addition to the zero root $\omega = 0$
that corresponds to the constant eigenfunction.

For odd functions, we normalize the eigenfunction $U$ by
\begin{equation}
\label{eigenvalues-5}
u_0(x) = \sin(\omega x), \quad x \in I_0, \quad u_-(-x) = -u_+(x), \quad x \in I_+.
\end{equation}
Representing the most general solution of the differential equation (\ref{spectrum}) on $I_+$ by
(\ref{eigenvalues-4}), we have the same reasoning to set $B = 0$. Then,
the Kirchhoff boundary conditions (\ref{bc-2}) yield
the constraints
\begin{align*}
\left\{ \begin{array}{l} A \cos(\omega \pi) = \sin(\omega L),  \\
2 A \omega \sin(\omega \pi) = \omega \cos(\omega L). \end{array} \right.
\end{align*}
Eliminating $A := \frac{\sin(\omega L)}{\cos(\omega \pi)}$, we obtain the dispersion relation (\ref{transc-2}) that admits
a countable set of positive roots $\{ \Omega_n \}_{n \in \mathbb{N}}$.
The root $\omega = 0$ is trivial since it corresponds to the zero solution for $U$.
Therefore, $\lambda = 0$ is a simple eigenvalue of
the spectral problem (\ref{spectrum}) with constant eigenfunction $U$. All assertions of the proposition are proved.
\end{proof}

\begin{remark}
When $L$ is a rational multiplier of $\frac{\pi}{2}$, one double eigenvalue
in the sequence $\{ n^2 \}_{n \in \mathbb{N}}$ is actually a triple eigenvalue.
Indeed, if $L = \frac{\pi m}{2 n}$, then in addition to the eigenfunctions (\ref{eigenvalues-1}) and (\ref{eigenvalues-2}),
we obtain the third eigenfunction for $\lambda = n^2$. If $m$ is even, the third
eigenfunction is given by
\begin{equation}
\label{eigenvalues-6}
\left\{ \begin{array}{lr} u_0(x) = \cos(n x), & x \in I_0, \\
u_+(x) = (-1)^{n+\frac{m}{2}} \cos[n(x-L-\pi)], & x \in I_+, \\
u_-(x) = u_+(-x), & x \in I_- \end{array} \right.
\end{equation}
whereas if $m$ is odd, the eigenfunction is given by
\begin{equation}
\label{eigenvalues-7}
\left\{ \begin{array}{lr} u_0(x) = \sin(n x), & x \in I_0, \\
u_+(x) = (-1)^{n+\frac{m-1}{2}} \cos[n(x-L-\pi)], & x \in I_+, \\
u_-(x) = -u_+(-x), & x \in I_- \end{array} \right.
\end{equation}
We say that a resonance occurs if $L$ is a rational multiplier of $\frac{\pi}{2}$.
\end{remark}

\begin{figure}[htp]
\includegraphics[width=12cm]{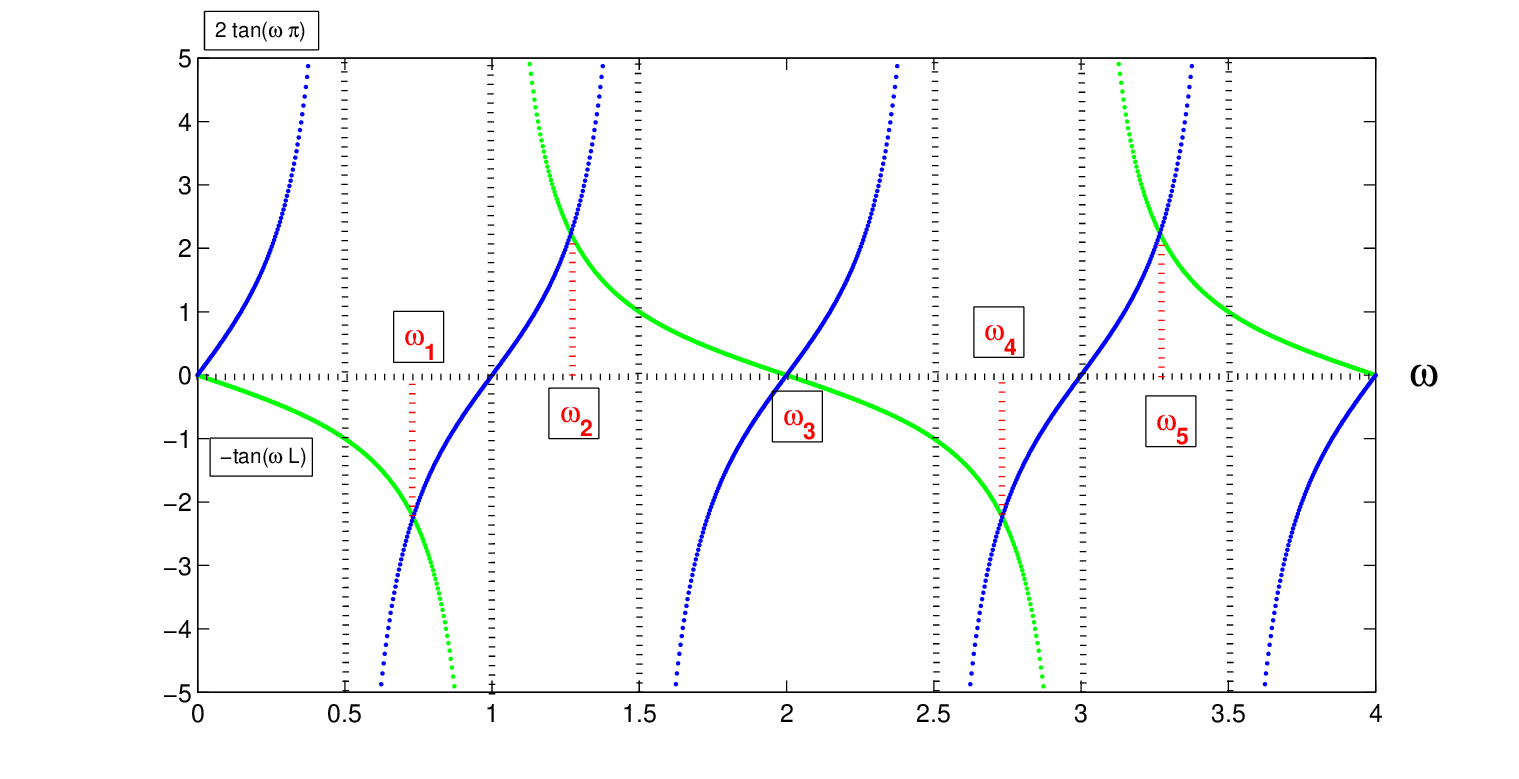}
\includegraphics[width=12cm]{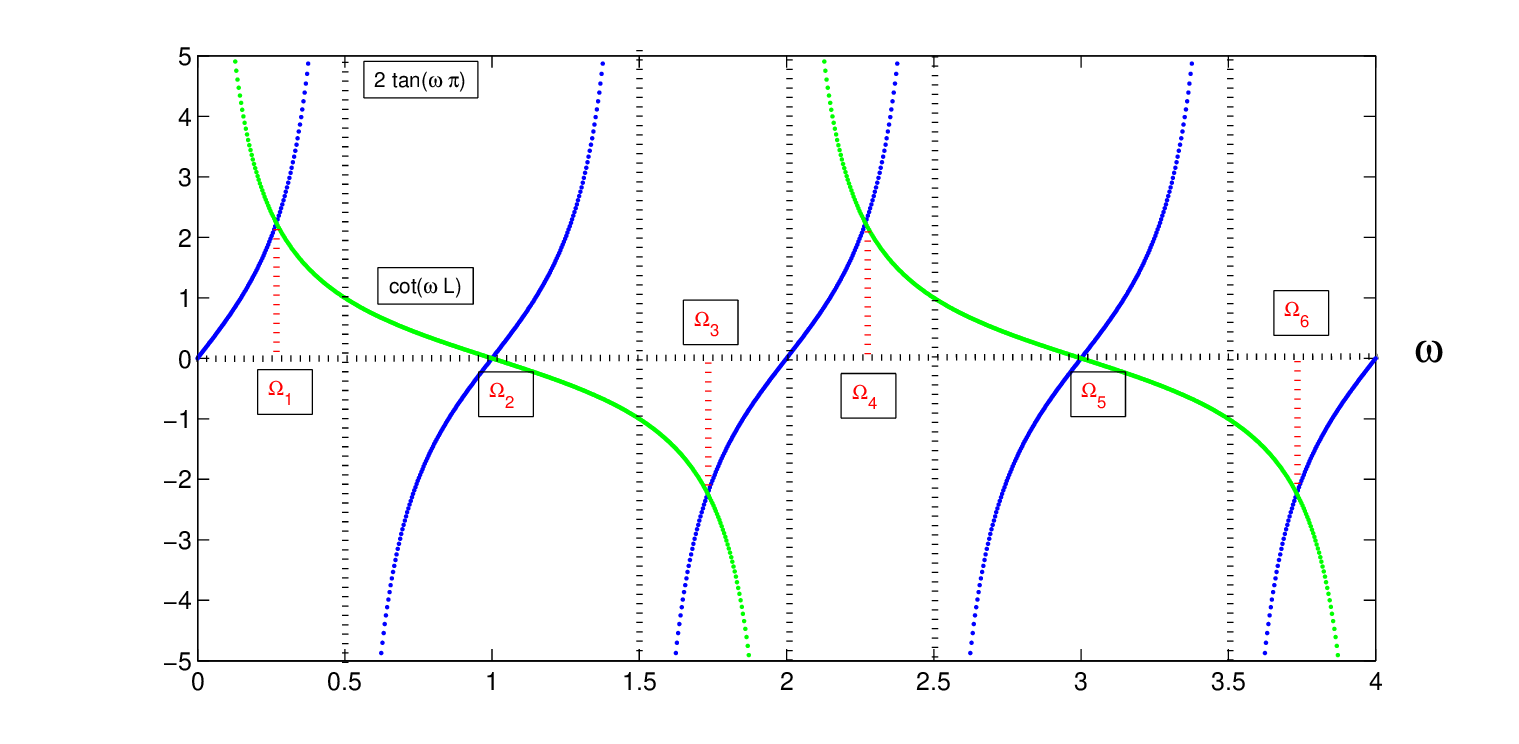}
\caption{Graphical solutions of the dispersion relations (\ref{transc-1}) (top) and (\ref{transc-2}) (bottom) for $L = \frac{\pi}{2}$.}
\label{fig2}
\end{figure}

Figure \ref{fig2} show graphical solutions of the dispersion relations (\ref{transc-1}) and (\ref{transc-2}) for $L = \frac{\pi}{2}$
with the roots $\{ \omega_n \}_{n \in \mathbb{N}}$ and $\{ \Omega_n \}_{n \in \mathbb{N}}$ clearly marked.
The graphical solution persists for any $L < \pi$ as seen from the vertical asymptotics of the functions
$\tan(\omega \pi)$, $\tan(\omega L)$, and $\cot(\omega L)$. As a result, for every $L < \pi$
the first positive roots of the dispersion relations (\ref{transc-1}) and (\ref{transc-2}) satisfy
\begin{equation}
\label{order-1}
0 < \Omega_1 < \frac{1}{2} < \omega_1 < \min\left\{1,\frac{\pi}{2L}\right\} \leq \Omega_2 < \ldots
\end{equation}
With similar analysis, it follows that for every $L \geq \pi$, the first roots satisfy
\begin{equation}
\label{order-2}
0 < \Omega_1 < \frac{\pi}{2L} \leq \omega_1 \leq \min\left\{\frac{1}{2},\frac{\pi}{L}\right\} < \Omega_2 < \ldots
\end{equation}
In either case, the smallest positive eigenvalue $\Omega_1^2 \in \sigma(-\Delta)$ corresponds to the odd eigenfunction
with respect to the middle point in $I_0$, whereas the second positive eigenvalue $\omega_1^2 \in \sigma(-\Delta)$
corresponds to the even eigenfunction. As is shown in Section \ref{sec-bifurcation},
the order of eigenvalues in (\ref{order-1}) or (\ref{order-2}) is important for analysis of
the first two bifurcations of the constant standing wave of the stationary NLS equation (\ref{statNLS}).

\begin{remark}
\label{remark-roots}
Since $\Omega_1 < \frac{1}{2}$, $\Omega_1 < \omega_1 < \Omega_2$, and $\omega_1 < 1$, the first two
positive eigenvalues in $\sigma(-\Delta)$ are simple, according to Proposition \ref{prop-spectrum}.
\end{remark}

For further references, we give the explicit expression for the eigenfunction $U$
corresponding to the smallest positive eigenvalue $\Omega_1^2$ in $\sigma(-\Delta)$:
\begin{equation}
\label{eigenfunction-Omega-1}
U(x) = \left\{ \begin{array}{l}
-\frac{\sin(\Omega_1 L)}{\cos(\Omega_1 \pi)} \cos(\Omega_1(x+L+\pi)), \quad x \in I_-, \\
\sin(\Omega_1 x), \quad \quad \quad \quad \quad \quad \quad \quad \quad \quad x \in I_0, \\
\frac{\sin(\Omega_1 L)}{\cos(\Omega_1 \pi)} \cos(\Omega_1(x-L-\pi)), \quad \quad x \in I_+.
\end{array} \right.
\end{equation}
The value $\Omega_1$ is found from the first positive root of the transcendental equation
\begin{equation}
\label{eigenvalue-Omega-1}
2 \sin(\omega \pi) \sin(\omega L) = \cos(\omega \pi) \cos(\omega L).
\end{equation}
The following proposition summarizes on properties of the root $\Omega_1$.

\begin{proposition}
\label{prop-root}
For every $L > 0$, the first positive root $\Omega_1$ of the dispersion relation (\ref{eigenvalue-Omega-1}) satisfies
$\Omega_1 < \frac{1}{2}$ and $\Omega_1 < \frac{\pi}{2L}$. Moreover, $\Omega_1 \to \frac{1}{2}$ as $L \to 0$ and $2 L \Omega_1 \to \pi$ as $L \to
\infty$.
\end{proposition}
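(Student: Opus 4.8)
The plan is to work directly with the dispersion relation (\ref{transc-2}), writing $D(\omega) := 2\tan(\omega\pi) - \cot(\omega L)$, which is equivalent to (\ref{eigenvalue-Omega-1}) wherever $\cos(\omega\pi)\neq 0$ and $\sin(\omega L)\neq 0$. I would localize the first positive root to the open interval $J := \left(0, \min\left\{\tfrac12, \tfrac{\pi}{2L}\right\}\right)$, on which neither denominator vanishes, so the two formulations agree. For $\omega \in J$ we have $\omega\pi \in (0,\pi/2)$ and $\omega L \in (0,\pi/2)$, hence $\tan(\omega\pi)$ is positive and strictly increasing while $\cot(\omega L)$ is positive and strictly decreasing; consequently $D$ is strictly increasing on $J$. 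This monotonicity is what makes the root unique and will drive the whole argument.

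Next I would examine the boundary behaviour of $D$ on $J$. As $\omega \to 0^+$ one has $\cot(\omega L)\to +\infty$ while $\tan(\omega\pi)\to 0$, so $D(\omega)\to -\infty$. At the right endpoint I split into two cases according to which of $\tfrac12$ and $\tfrac{\pi}{2L}$ is smaller. If $L<\pi$ the endpoint is $\tfrac12$, where $\tan(\omega\pi)\to+\infty$ while $\cot(\omega L)\to\cot(L/2)$ is finite, so $D\to+\infty$; if $L\geq \pi$ the endpoint is $\tfrac{\pi}{2L}$, where $\cot(\omega L)\to 0^+$ and $\tan(\omega\pi)\to\tan\!\big(\tfrac{\pi^2}{2L}\big)>0$, so $D$ tends to a positive limit (with the value $+\infty$ in the borderline case $L=\pi$). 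In every case $D$ passes from $-\infty$ to a positive value, so by the intermediate value theorem and strict monotonicity it has exactly one zero in $J$. Because $D<0$ throughout $(0,\Omega_1)$, this zero is the first positive root, which is $\Omega_1$; since it lies in $J$ we immediately obtain both $\Omega_1<\tfrac12$ and $\Omega_1<\tfrac{\pi}{2L}$. One point I would check carefully is that clearing denominators to pass to (\ref{eigenvalue-Omega-1}) introduces no spurious smaller root on $J$, which is automatic since the two equations are equivalent there.

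Finally I would read off the two limits from the identity $2\tan(\Omega_1\pi)=\cot(\Omega_1 L)$. As $L\to 0$, the bound $\Omega_1<\tfrac12$ keeps $\Omega_1 L \leq L/2 \to 0$, so $\cot(\Omega_1 L)\to+\infty$; the left-hand side can blow up only if $\tan(\Omega_1\pi)\to+\infty$, forcing $\Omega_1\to\tfrac12$ (a short contradiction or squeeze argument makes this rigorous). As $L\to\infty$, the bound $\Omega_1<\tfrac{\pi}{2L}$ forces $\Omega_1\to 0$, hence $\tan(\Omega_1\pi)\to 0$ and therefore $\cot(\Omega_1 L)\to 0$; since $\Omega_1 L\in(0,\pi/2)$ and $\cot$ is a decreasing bijection of that interval onto $(0,\infty)$, this gives $\Omega_1 L\to\tfrac{\pi}{2}$, i.e.\ $2L\Omega_1\to\pi$. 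I expect no serious obstacle here: the argument is elementary once the monotonicity of $D$ on $J$ and the endpoint case split are in hand, and the latter is the only place demanding a little care.
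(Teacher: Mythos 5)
Your proof is correct, and it takes a more self-contained route than the paper for the first half of the statement. The paper obtains the bounds $\Omega_1 < \frac{1}{2}$ and $\Omega_1 < \frac{\pi}{2L}$ simply by citing the orderings (\ref{order-1}) and (\ref{order-2}), which in the paper are justified only by the graphical discussion of the vertical asymptotes of $\tan(\omega\pi)$, $\tan(\omega L)$, $\cot(\omega L)$ in Section 2; your monotonicity-plus-intermediate-value argument for $D(\omega) = 2\tan(\omega\pi)-\cot(\omega L)$ on $J = \bigl(0,\min\{\tfrac12,\tfrac{\pi}{2L}\}\bigr)$ makes that step rigorous, handles the case split at the right endpoint (including the borderline $L=\pi$) cleanly, and yields as a bonus the uniqueness of the root in $J$, which the paper never states. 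Your care about the equivalence of the $\tan$/$\cot$ form (\ref{transc-2}) and the cleared-denominator form (\ref{eigenvalue-Omega-1}) on $J$ is exactly the right point to check, since spurious roots of the algebraic form could in principle occur where $\cos(\omega\pi)$ or $\sin(\omega L)$ vanish, and those points lie outside $J$. For the two limits, your argument and the paper's are the same in substance: the paper works multiplicatively with (\ref{eigenvalue-Omega-1}), noting that as $L\to 0$ the bound $\Omega_1<\tfrac12$ gives $\sin(L\Omega_1)\to 0$ and hence $\cos(\pi\Omega_1)\to 0$, while as $L\to\infty$ the bound $\Omega_1<\tfrac{\pi}{2L}$ gives $\sin(\pi\Omega_1)\to 0$ and hence $\cos(L\Omega_1)\to 0$; your version phrases the same squeeze through $2\tan(\Omega_1\pi)=\cot(\Omega_1 L)$, using that $\Omega_1\pi$ and $\Omega_1 L$ are confined to $(0,\tfrac{\pi}{2})$ where these functions are monotone bijections. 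In short: identical strategy for the limits, and a genuinely more rigorous, elementary replacement for the paper's appeal to the graphically derived eigenvalue orderings in the bounds.
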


\begin{proof}
The bounds $\Omega_1 < \frac{1}{2}$ and $\Omega_1 < \frac{\pi}{2L}$ follow from the orderings (\ref{order-1}) and (\ref{order-2}).

From the bound $\Omega_1 < \frac{1}{2}$ and the algebraic equation (\ref{eigenvalue-Omega-1}), we realize
that $\sin(L \Omega_1) \to 0$ and $\cos(\pi \Omega_1) \to 0$ as $L \to 0$. Therefore, $\Omega_1 \to \frac{1}{2}$ as $L \to 0$.

On the other hand, from the bound $\Omega_1 < \frac{\pi}{2L}$ and the algebraic equation (\ref{eigenvalue-Omega-1}), we realize
that $\sin(\pi \Omega_1) \to 0$ and $\cos(L \Omega_1) \to 0$ as $L \to \infty$. Therefore, $2 L \Omega_1 \to \pi$ as $L \to \infty$.
\end{proof}

\section{Proof of Theorem \ref{theorem-graph}}
\label{sec-bifurcation}

As we discussed in the introduction, critical points of the energy functional
$H_{\Lambda} := E - \Lambda Q$ in $\mathcal{E}(\Delta)$
are equivalent to the strong solutions of the stationary NLS equation (\ref{statNLS}).
Among all solutions of the stationary NLS equation (\ref{statNLS}), we are interested
in the solutions that yield the minimum of energy denoted by $E_0$ at the fixed charge
denoted by $Q_0$. These solutions correspond to the ground states
of the constrained minimization problem (\ref{minimizer}).
There exists a mapping between parameters $\Lambda$ and $Q_0$
for the ground state solutions.

The first elementary result describes the local bifurcation of the constant standing wave
(\ref{ground-1}) as a ground state of the minimization problem (\ref{minimizer}) for small charge $Q_0$.

\begin{lemma}
\label{lemma-1}
There exists $Q_0^*$ such that the constant standing wave (\ref{ground-1}) is a ground
state of the constrained minimization problem (\ref{minimizer}) for $Q_0 \in (0,Q_0^*)$.
\end{lemma}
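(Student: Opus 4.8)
The plan is to establish the sharp lower bound $E(\Psi) \ge E(\Phi_p)$ among all admissible $\Psi$ directly, where $\Phi_p \equiv p$ denotes the constant state (\ref{ground-1}), by exploiting the spectral gap of $-\Delta$ between the zero eigenvalue and the first positive eigenvalue $\Omega_1^2$ supplied by Proposition \ref{prop-spectrum} and Remark \ref{remark-roots}. Write $\ell := 2(L+2\pi)$ for the total length and let $e_0 \equiv \ell^{-1/2}$ be the $L^2$-normalized constant eigenfunction, so that $\Phi_p = \sqrt{Q_0}\,e_0$ and $E(\Phi_p) = -Q_0^2/\ell$. Given any $\Psi \in \mathcal{E}(\Delta)$ with $Q(\Psi) = Q_0$, I would use the phase invariance of $E$ and $Q$ to decompose $\Psi = a e_0 + \phi$ with $a \ge 0$ and $\langle \phi, e_0\rangle = 0$, so that $a^2 + \|\phi\|_{L^2}^2 = Q_0$. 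Since $\partial_x e_0 = 0$, the gap gives $\|\partial_x \Psi\|_{L^2}^2 = \|\partial_x \phi\|_{L^2}^2 \ge \Omega_1^2 \|\phi\|_{L^2}^2$.

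Next I would expand the quartic term. Using $\int \phi = 0$ to kill the contribution linear in $\phi$, one obtains
\[
\int |\Psi|^4\,dx = \frac{a^4}{\ell} + \frac{2a^2}{\ell}\|\phi\|_{L^2}^2 + \frac{4a^2}{\ell}\|\Re\phi\|_{L^2}^2 + R(\phi),
\]
where $R(\phi)$ gathers the cubic and quartic contributions in $\phi$. Substituting $a^2 = Q_0 - \|\phi\|_{L^2}^2$ makes the $Q_0^2$ terms cancel against $E(\Phi_p)$ and yields the exact identity
\[
E(\Psi) - E(\Phi_p) = \|\partial_x \phi\|_{L^2}^2 - \frac{4a^2}{\ell}\|\Re\phi\|_{L^2}^2 + \frac{1}{\ell}\|\phi\|_{L^2}^4 - R(\phi).
\]
Bounding $a^2 \le Q_0$ and $\|\Re\phi\|_{L^2} \le \|\phi\|_{L^2}$ together with the gap, the quadratic part in $\phi$ is at least $(\Omega_1^2 - 4Q_0/\ell)\|\phi\|_{L^2}^2$; this is coercive, and retains a fixed positive fraction of $\|\partial_x\phi\|_{L^2}^2$, precisely when $Q_0 < \ell \Omega_1^2/4 = (L+2\pi)\Omega_1^2/2$. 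This threshold is exactly the onset of the symmetry-breaking bifurcation ($4p^2 = \Omega_1^2$), so it is the natural candidate for $Q_0^*$.

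To close the argument I would control the remainder $R(\phi)$. It suffices to prove the inequality for competitors with $E(\Psi) \le E(\Phi_p) \le 0$. For these, the Sobolev embedding $H^1 \hookrightarrow L^\infty$ on the compact graph gives $\|\Psi\|_{L^4}^4 \le \|\Psi\|_{L^\infty}^2 Q_0 \le C(\|\partial_x\Psi\|_{L^2}^2 + Q_0)Q_0$, whence $E(\Psi) \ge (1 - CQ_0)\|\partial_x\Psi\|_{L^2}^2 - CQ_0^2$; combined with $E(\Psi) \le -Q_0^2/\ell$ this forces the a priori smallness $\|\partial_x\phi\|_{L^2} = \|\partial_x\Psi\|_{L^2} \le C Q_0$ for $Q_0$ small. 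Since $R(\phi)$ is, via Sobolev and the gap, bounded by a quantity of the form $C(\sqrt{Q_0}\,\|\partial_x\phi\|_{L^2} + \|\partial_x\phi\|_{L^2}^2)\|\partial_x\phi\|_{L^2}^2$, the a priori bound makes it a strictly lower-order correction absorbed by the retained kinetic term. Hence $E(\Psi) - E(\Phi_p) > 0$ unless $\phi = 0$, which gives both minimality and uniqueness of $\Phi_p$ up to phase for $Q_0 \in (0, Q_0^*)$.

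The main obstacle is the indefinite cross-term $\tfrac{4a^2}{\ell}\|\Re\phi\|_{L^2}^2$ produced by expanding $|\Psi|^4$: it is this term, and not the quartic self-interaction, that competes with the kinetic energy and sets the threshold $Q_0^*$. Establishing that the remaining cubic and quartic pieces in $R(\phi)$ are genuinely subordinate requires the a priori $H^1$-smallness of energy competitors, which is where the compactness of the graph and the Sobolev embedding enter decisively.
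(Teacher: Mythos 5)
Your proof is correct, but it takes a genuinely different route from the paper's. The paper disposes of Lemma \ref{lemma-1} in a few lines by appealing to local bifurcation theory: since the lowest point of $\sigma(-\Delta)$ is the simple zero eigenvalue with constant eigenfunction (Proposition \ref{prop-spectrum}), the ground state for small $Q_0$ is the standing-wave branch bifurcating from that eigenfunction, and this branch is precisely the constant solution (\ref{ground-1}); the relation $Q_0 = (L+2\pi)|\Lambda|$ is then read off from (\ref{charge}). Your argument replaces this soft appeal to general theory with a self-contained, quantitative energy comparison: the orthogonal decomposition $\Psi = a e_0 + \phi$, the spectral gap $\|\partial_x \phi\|_{L^2}^2 \ge \Omega_1^2 \|\phi\|_{L^2}^2$, the exact expansion of $E(\Psi)-E(\Phi_p)$ (which I have checked: the cancellation against $-Q_0^2/\ell$ after substituting $a^2 = Q_0 - \|\phi\|_{L^2}^2$ is correct), and the a priori bound $\|\partial_x\Psi\|_{L^2} \le C Q_0$ for competitors with $E(\Psi)\le E(\Phi_p)$. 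This buys you more than the paper proves at this stage: you get global minimality together with uniqueness up to phase, and you isolate the indefinite cross term $\frac{4a^2}{\ell}\|\mathrm{Re}\,\phi\|_{L^2}^2$ --- the nonlinear counterpart of the difference between the operators $L_+$ and $L_-$ in (\ref{L-plus})--(\ref{L-minus}) --- as the mechanism that sets the threshold $\frac{1}{2}(L+2\pi)\Omega_1^2$, which is exactly the $Q_0^*$ appearing later in Lemma \ref{lemma-2}. One caveat: the absorption of the remainder $R(\phi)$ uses Sobolev constants and the a priori smallness of $\|\partial_x\phi\|_{L^2}$, so as written your argument certifies minimality only for $Q_0$ sufficiently small, not all the way up to the sharp value $\frac{1}{2}(L+2\pi)\Omega_1^2$ that your quadratic-form analysis suggests; since the lemma asserts only the existence of some $Q_0^*>0$, this is harmless, and the paper's bifurcation-theoretic proof has the same limitation.
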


\begin{proof}
The spectrum of $-\Delta$ equipped with the domain $\mathcal{D}(\Delta) \subset L^2(I_- \cup I_0 \cup I_+)$
is given by eigenvalues described in Proposition \ref{prop-spectrum}.
The simple zero eigenvalue is the lowest eigenvalue in
$\sigma(-\Delta)$ corresponding to the constant eigenfunction.

By the local bifurcation theory \cite{Kapitula,Pelin-book},
the ground state of the constrained minimization problem
(\ref{minimizer}) for small values of $Q_0$ is the standing wave bifurcating
from the constant eigenfunction of $-\Delta$ in $\mathcal{D}(\Delta)$. Since
the constant solution $\Phi(x) = p$ exists for
the stationary NLS equation (\ref{statNLS}) with $\Lambda = -2p^2$ for every $\Lambda < 0$,
the bifurcating ground state is the constant standing wave given by (\ref{ground-1}).
The relation between $Q_0$ and $\Lambda$ is computed explicitly from the definition (\ref{charge}):
$$
Q_0 = 2 (L+2\pi) p^2 = (L+2\pi) |\Lambda|.
$$
This concludes the proof of the lemma.
\end{proof}

We shall now consider bifurcations of new standing waves of the stationary NLS equation (\ref{statNLS})
from the constant solution (\ref{ground-1}). For both $L < \pi$ and $L \geq \pi$, the lowest nonzero eigenvalue in
$\sigma(-\Delta)$ in Proposition \ref{prop-spectrum} is the positive eigenvalue $\Omega_1^2$,
which corresponds to the odd eigenfunction in $I_- \cup I_0 \cup I_+$, see orderings
(\ref{order-1}) and (\ref{order-2}). This smallest nonzero eigenvalue induces the symmetry-breaking
bifurcation of the ground state of the constrained minimization problem (\ref{minimizer})
at $Q_0^*$. This bifurcation only marks the first bifurcation in a sequence of bifurcations
of new standing waves from the constant solution (\ref{ground-1}).
The second bifurcation is induced due to the positive eigenvalue $\omega_1^2$ in $\sigma(-\Delta)$,
which corresponds to the even eigenfunction in $I_- \cup I_0 \cup I_+$.
The corresponding result is described in the following lemma.

\begin{lemma}
\label{lemma-2}
For $\Lambda < -\frac{1}{2} \Omega_1^2$,
the constant solution (\ref{ground-1}) is no longer the ground state
of the constrained minimization problem (\ref{minimizer}) for
$Q_0 > Q_0^* := \frac{1}{2} (L+2\pi) \Omega_1^2$. Moreover,
the constant solution (\ref{ground-1}) is a saddle point of $E(\Psi)$ under fixed $Q(\Psi) = Q_0$
with one negative eigenvalues for $Q_0 \in (Q_0^*,Q_0^{**})$ and
two negative eigenvalues for $Q_0 \gtrsim Q_0^{**}$, where $Q_0^{**} := \frac{1}{2} (L+2\pi) \omega_1^2$.
\end{lemma}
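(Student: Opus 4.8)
The plan is to analyze the constant solution (\ref{ground-1}) as a critical point of $H_\Lambda = E - \Lambda Q$ by computing its second variation and counting negative eigenvalues subject to the charge constraint. Writing a complex perturbation as $\Psi = p + \phi + i\psi$ with real $\phi,\psi$ and expanding $H_\Lambda$ to quadratic order, I expect the Hessian to decouple into the two real Schr\"odinger operators
\begin{equation*}
L_+ = -\partial_x^2 - 6 p^2 - \Lambda, \qquad L_- = -\partial_x^2 - 2 p^2 - \Lambda,
\end{equation*}
acting on $\mathcal{D}(\Delta)$, where $L_+$ governs the real part $\phi$ and $L_-$ the imaginary part $\psi$. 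Substituting $\Lambda = -2p^2$ from (\ref{ground-1}) gives $L_+ = -\Delta - 4p^2$ and $L_- = -\Delta$. Since $L_- = -\Delta \geq 0$ by Proposition \ref{prop-spectrum}, with only a simple zero eigenvalue on the constant function (the gauge mode), the imaginary direction contributes no negative eigenvalues, so the whole count comes from $L_+$.

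Next I would count the negative eigenvalues of $L_+$. Its spectrum is $\{\lambda - 4p^2 : \lambda \in \sigma(-\Delta)\}$, so $\lambda \in \sigma(-\Delta)$ produces a negative eigenvalue of $L_+$ exactly when $\lambda < 4p^2$. Using the charge relation $Q_0 = 2(L+2\pi)p^2$, the condition $4p^2 > \lambda$ reads $Q_0 > \tfrac12(L+2\pi)\lambda$. By Proposition \ref{prop-spectrum} and the orderings (\ref{order-1})--(\ref{order-2}), the eigenvalues of $-\Delta$ below the first ring eigenvalue $1$ are, in increasing order, $0 < \Omega_1^2 < \omega_1^2$. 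Hence the thresholds at which $L_+$ gains a negative eigenvalue from the $\Omega_1^2$ and $\omega_1^2$ modes are precisely $Q_0^* = \tfrac12(L+2\pi)\Omega_1^2$ and $Q_0^{**} = \tfrac12(L+2\pi)\omega_1^2$, and the number of negative eigenvalues $n(L_+)$ equals $1$ for $Q_0 < Q_0^*$, equals $2$ for $Q_0^* < Q_0 < Q_0^{**}$, and equals $3$ for $Q_0$ just above $Q_0^{**}$.

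To pass to the constrained problem I would note that $Q(\Psi)=Q_0$ linearizes to $Q'(\Phi)\cdot(\phi+i\psi) = 2p\int \phi\,dx = 0$, i.e. $\phi \perp \mathbf{1}$, while $\psi$ stays unconstrained. The reduced Hessian for minimizing $E$ at fixed $Q$ is the restriction of $H_\Lambda''$ to this tangent space, so the constrained Morse index equals $n(L_-) + n(L_+|_{\{\mathbf{1}\}^\perp}) = n(L_+|_{\{\mathbf{1}\}^\perp})$. The crucial point is that the constraint direction $\mathbf{1}$ is itself an eigenfunction of $L_+$, for the eigenvalue $-4p^2 < 0$ of the constant mode; thus $L_+$ leaves $\{\mathbf{1}\}^\perp$ invariant and its restriction there simply discards that one negative eigenvalue, giving $n(L_+|_{\{\mathbf{1}\}^\perp}) = n(L_+) - 1$ with no interlacing ambiguity. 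Combined with the count above this yields a constrained index of $0$ for $Q_0 < Q_0^*$, of $1$ for $Q_0^* < Q_0 < Q_0^{**}$, and of $2$ for $Q_0$ just above $Q_0^{**}$.

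Finally, since $\Lambda = -2p^2 < -\tfrac12\Omega_1^2$ is equivalent to $Q_0 > Q_0^*$, a positive constrained index for $Q_0 > Q_0^*$ shows the constant solution is no longer a local minimizer of $E$ at fixed $Q$, hence not the ground state, and it is a saddle point with the stated number of negative eigenvalues. The main obstacle, and the step requiring the most care, is the constrained count: one must verify that the charge gradient points along an eigenfunction of the Hessian so that restricting to the constraint removes exactly one negative eigenvalue, and one must invoke the ordering (\ref{order-1})--(\ref{order-2}) to ensure that immediately above $Q_0^{**}$ no further eigenvalue of $-\Delta$ (in particular none of the double ring eigenvalues $n^2$, which start at $1 > \omega_1^2$) has yet crossed below $4p^2$.
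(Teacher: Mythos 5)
Your proof is correct, and its skeleton matches the paper's: decompose the perturbation into real and imaginary parts, identify $L_- = -\Delta$ and $L_+ = -\Delta - 4p^2 = -\Delta + 2\Lambda$ at the constant solution (\ref{ground-1}), count the negative eigenvalues of $L_+$ by shifting $\sigma(-\Delta)$ and using the orderings (\ref{order-1})--(\ref{order-2}), and then reduce the count by one on the constrained space. Where you genuinely depart from the paper is in how that last reduction is justified. The paper invokes the Shatah--Strauss/Weinstein index theory: the number of negative eigenvalues of $L_+$ drops by exactly one in the constrained space $L^2_c$ of (\ref{constrained-L-2}) if and only if the slope condition $\frac{d}{d\Lambda} Q(\Phi) \leq 0$ holds, which is verified here since $\frac{d}{d\Lambda} Q(\Phi) = -(L+2\pi) < 0$. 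You instead observe that the constraint direction $\mathbf{1} \propto \Phi$ is itself an eigenfunction of $L_+$ (with eigenvalue $-4p^2 < 0$), so $\{\mathbf{1}\}^\perp$ is an invariant subspace and the restriction discards precisely that one negative eigenvalue, with no interlacing ambiguity. Your argument is more elementary and self-contained, being exact rather than criterion-based, but it exploits a special feature of the constant solution; the paper's slope-condition route is the one that generalizes to the bifurcating asymmetric wave in Lemma \ref{lemma-3}, where $\Phi$ is no longer an eigenfunction of $L_+$. Both routes give the same constrained Morse index ($0$, $1$, $2$ in the three regimes), and a positive index rules out even a local constrained minimum, hence the ground state property. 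One small imprecision: your claim that the eigenvalues of $-\Delta$ below the first ring eigenvalue $1$ are exactly $0 < \Omega_1^2 < \omega_1^2$ need not hold for all $L$ (for $\pi/2 < L < \pi$ the ordering (\ref{order-1}) permits $\Omega_2^2 < 1$); what your argument actually uses, and what the orderings do guarantee, is that $0$, $\Omega_1^2$, $\omega_1^2$ are the three smallest eigenvalues and that the spectrum is discrete above $\omega_1^2$, so the counts for $Q_0 \in (Q_0^*, Q_0^{**})$ and $Q_0 \gtrsim Q_0^{**}$ are unaffected.
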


\begin{proof}
When the perturbation to the constant solution (\ref{ground-1}) is decomposed into
the real and imaginary parts $U$ and $W$, the second variation of $H_{\Lambda}
= E - \Lambda Q$ is defined by two Schr\"{o}dinger operators $L_+$ and $L_-$ as follows:
\begin{equation}
\label{second-variation}
\delta^2 H_{\Lambda} := \langle L_+ U, U \rangle_{L^2} + \langle L_- W, W \rangle_{L^2},
\end{equation}
where
\begin{align}
\label{L-plus}
L_+ & =  - \Delta - \Lambda - 6 \Phi^2, \\
\label{L-minus}
L_- & =  - \Delta - \Lambda - 2 \Phi^2.
\end{align}
Because $\Phi^2$ is bounded on $I_- \cup I_0 \cup I_+$, the operators $L_+$ and $L_-$ are also
defined on the domain $\mathcal{D}(\Delta) \subset L^2(I_- \cup I_0 \cup I_+)$.
If $\Phi$ is the constant solution (\ref{ground-1}), then
$$
L_- = -\Delta \quad \mbox{\rm and} \quad L_+ = -\Delta + 2 \Lambda.
$$
Therefore, the smallest eigenvalue of $L_-$ is located at zero and it is simple.
It corresponds to the phase rotation of the standing wave $\Phi$.

Let us now recall the theory of constrained minimization from Shatah--Strauss \cite{SS} and Weinstein \cite{W}.
If $L_+$ has only one negative eigenvalue, then $\Phi$ is not a minimizer of energy $H_{\Lambda}$.
Nevertheless, it is a constrained minimizer of energy $H_{\Lambda}$
if a certain slope condition is satisfied. Associated with the constraint $Q(\Psi) = Q_0$, we can introduce
the constrained $L^2$ space by
\begin{equation}
\label{constrained-L-2}
L^2_c := \left\{ U \in L^2(I_- \cup I_0 \cup I_+) : \quad \langle U, \Phi \rangle_{L^2} = 0 \right\}.
\end{equation}
The number of negative eigenvalues of $L_+$ is reduced by one in the constrained space
(\ref{constrained-L-2}) if and only if $\frac{d}{d\Lambda} Q(\Phi) \leq 0$ \cite{SS,W}.
Moreover, if $\frac{d}{d \Lambda} Q(\Phi) < 0$, the constrained minimizer is non-degenerate
with respect to the real part of the perturbation $U$.

The smallest eigenvalue of $L_+$ is negative and the second eigenvalue of $L_+$ is located at
$\Omega_1^2 + 2 \Lambda$. Since $\frac{d}{d\Lambda} Q(\Phi) = -(L+2\pi) < 0$,
the constant standing wave (\ref{ground-1}) is a constrained minimizer of $E(\Psi)$
if $\Lambda \in (-\frac{1}{2} \Omega_1^2,0)$ but it is no longer the ground state
if $\Lambda \in (-\infty,-\frac{1}{2} \Omega_1^2)$.

By Remark \ref{remark-roots}, the eigenvalues $\Omega_1^2$ and $\omega_1^2$ in $\sigma(-\Delta)$
are simple. As a result, operator $L_+$ has exactly two negative eigenvalues for $-\frac{1}{2} \omega_1^2 < \Lambda < -\frac{1}{2} \Omega_1^2$
and exactly three negative eigenvalues for $\Lambda \lesssim -\frac{1}{2} \omega_1^2$.
The number of negative eigenvalues of $L_+$ is reduced by one in the constrained space $L^2_c$.
Therefore, the constant solution (\ref{ground-1})
is a saddle point of $E(\Psi)$ under fixed $Q(\Psi) = Q_0$
with exactly one negative eigenvalue for $Q_0 \in (Q_0^{*},Q_0^{**})$
and exactly two negative eigenvalues for $Q_0 \gtrsim Q_0^{**}$,
where $Q_0^{*} := \frac{1}{2} (L + 2\pi) \Omega_1^2$ and
$Q_0^{**} := \frac{1}{2} (L + 2\pi) \omega_1^2$.
\end{proof}

\begin{remark}
Lemmas \ref{lemma-1} and \ref{lemma-2} prove the first assertion of Theorem \ref{theorem-graph}.
\end{remark}

We shall now describe the first (symmetry-breaking) bifurcation of the constant standing wave (\ref{ground-1})
at $\Lambda = -\frac{1}{2} \Omega_1^2$ or $Q_0 = Q_0^*$. We will show that it is a pitchfork bifurcation, which leads to a family
of positive asymmetric standing waves of the stationary NLS equation (\ref{statNLS})  for $\Lambda \lesssim -\frac{1}{2} \Omega_1^2$.
The asymmetric states are the ground states of the constrained minimization problem (\ref{minimizer})
for $Q_0 \gtrsim Q_0^*$. The corresponding result is described in the following lemma.

\begin{lemma}
\label{lemma-3}
Let $\Lambda_0 := -\frac{1}{2}\Omega_1^2$. There exists $\Lambda_* \in \left(-\infty,\Lambda_0 \right)$ such that
the stationary NLS equation (\ref{statNLS}) with $\Lambda \in (\Lambda_*,\Lambda_0)$
admits a positive asymmetric standing wave
$\Phi$, which converges to the constant solution (\ref{ground-1})
in the $H^2$-norm as $\Lambda \to \Lambda_0$.
Moreover, there exists $\tilde{Q}_0^{*} \in (Q_0^*,\infty)$ such that
the positive asymmetric standing wave is a ground state
of the constrained minimization problem (\ref{minimizer}) for $Q_0 \in (Q_0^*,\tilde{Q}_0^{*})$.
\end{lemma}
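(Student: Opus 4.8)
The plan is to produce the asymmetric branch by a Lyapunov--Schmidt reduction at the critical value $\Lambda_0 = -\frac{1}{2}\Omega_1^2$, where the second eigenvalue of $L_+$ crosses zero, and then to upgrade the local branch to a ground state via the constrained variational theory of \cite{SS,W,GSS1}. Writing a solution of (\ref{statNLS}) as $\Phi = p + v$ with $p = \sqrt{|\Lambda|/2}$ the constant solution (\ref{ground-1}) and subtracting the equation for $p$, one gets $L_+(\Lambda)v = 6pv^2 + 2v^3$ with $L_+(\Lambda) = -\Delta + 2\Lambda$ as in (\ref{L-plus}). At $\Lambda=\Lambda_0$ the operator $L_+(\Lambda_0) = -\Delta - \Omega_1^2$ has, by Proposition \ref{prop-spectrum} and Remark \ref{remark-roots}, a one-dimensional kernel spanned by the odd eigenfunction $U_1$ of (\ref{eigenfunction-Omega-1}), with the rest of the spectrum bounded away from zero. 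The structural ingredient I would record first is the reflection $\mathcal{R}$ that interchanges $I_+$ and $I_-$ and reflects $I_0$: it commutes with $\Delta$ and with the nonlinearity, fixes $p$, and satisfies $\mathcal{R}U_1 = -U_1$. This $\mathbb{Z}_2$-equivariance is what forces a pitchfork rather than a transcritical bifurcation.

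Next I would decompose $v = aU_1 + w$ with $w \perp U_1$ and $a\in\mathbb{R}$. Since $L_+(\Lambda)$ is boundedly invertible on $U_1^\perp$ for $\Lambda$ near $\Lambda_0$, the implicit function theorem solves the orthogonal projection for a smooth $w = w(a,\Lambda) = O(a^2)$ in $H^2$ (following the local bifurcation framework of \cite{Kapitula,Pelin-book}). Substituting into the projection onto $U_1$ yields a scalar reduced equation $g(a,\Lambda)=0$. Using $\mathcal{R}U_1=-U_1$ and equivariance one checks $w(-a,\Lambda) = \mathcal{R}w(a,\Lambda)$ and hence $g(-a,\Lambda) = -g(a,\Lambda)$; in particular the quadratic term drops out (consistently with $\int U_1^3\,dx = 0$, as $U_1$ is odd), and
\[ g(a,\Lambda) = a\left[\, (\Omega_1^2 + 2\Lambda) - c\,a^2 + O(a^4) \,\right], \]
with $c = 2\int U_1^4\,dx + 72\,p_0^2\,\langle L_+(\Lambda_0)^{-1}U_1^2, U_1^2\rangle$ and $p_0 = \Omega_1/2$ (here $U_1^2 \perp U_1$, so the resolvent acts within $U_1^\perp$ directly). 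The nontrivial branch solves $2(\Lambda - \Lambda_0) = c\,a^2 + O(a^4)$, so the asymmetric wave exists on the side $\Lambda < \Lambda_0$ precisely when $c < 0$, and $\Lambda_*$ measures the extent of the expansion. Positivity of $\Phi = p + aU_1 + w$ for small $|a|$ follows from $p_0 > 0$ and $\|aU_1+w\|_{L^\infty}\to 0$, asymmetry follows from the nonzero odd part $aU_1$, and the $H^2$-convergence to the constant solution as $\Lambda\to\Lambda_0$ follows from $a\to0$, $w=O(a^2)$ in $H^2$, and continuity of $p(\Lambda)$.

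The main obstacle is the sign of $c$. The first term is positive, while the resolvent term is a difference of signs: $L_+(\Lambda_0) = -\Delta - \Omega_1^2$ is indefinite on $U_1^\perp$, with a single negative direction, the constant mode (eigenvalue $-\Omega_1^2$), and strictly positive eigenvalues $\omega_1^2 - \Omega_1^2, \ldots$ and $n^2 - \Omega_1^2$ on all higher $\mathcal{R}$-even modes. Expanding the even, positive function $U_1^2$ in the eigenbasis of $-\Delta$, the constant-mode contribution to $\langle L_+(\Lambda_0)^{-1}U_1^2, U_1^2\rangle$ equals $-\frac{\|U_1\|_{L^2}^4}{2(L+2\pi)\Omega_1^2} < 0$ while the higher modes contribute positively. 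Establishing $c < 0$ thus amounts to showing the negative constant-mode term dominates; I would do this from the explicit form (\ref{eigenfunction-Omega-1}) by solving $L_+(\Lambda_0)\chi = U_1^2$ piecewise under the Kirchhoff conditions and evaluating $\langle\chi, U_1^2\rangle$ in closed form, using $\Omega_1 < \frac{1}{2}$ and $\Omega_1 < \frac{\pi}{2L}$ from Proposition \ref{prop-root} to control the sign uniformly in $L$. This quantitative resolvent estimate is the delicate point.

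Finally, for the ground-state claim I would argue in two moves. First, on the branch the eigenvalue of $L_+$ crossing zero at $\Lambda_0$ becomes positive (since $c<0$, the reduced linearization $g'(a) = -2c\,a^2 > 0$), so $L_+$ retains exactly one negative eigenvalue; as $\frac{d}{d\Lambda}Q(\Phi) < 0$ persists by continuity from Lemma \ref{lemma-2}, the Shatah--Strauss--Weinstein criterion \cite{SS,W} lowers the constrained count to zero on $L^2_c$ of (\ref{constrained-L-2}), so the asymmetric wave is a nondegenerate local constrained minimizer, orbitally stable by \cite{GSS1}; moreover a direct energy comparison on the branch gives $E_{\mathrm{asym}}(Q_0) < E_{\mathrm{const}}(Q_0)$ for $Q_0 \gtrsim Q_0^*$. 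Second, to identify it with the global minimizer, I would use that a ground state exists for every $Q_0$ (compactness of the graph) and that, as $Q_0 \downarrow Q_0^*$, a sequence of ground states is bounded in $H^1$ and converges, along a subsequence and strongly in $L^4$, to a minimizer at $Q_0^*$; since the constant is the unique ground state at $Q_0^*$ (by Lemma \ref{lemma-1} and continuity from the left, the asymmetric branch having zero amplitude there), the limit is the constant solution. Hence for $Q_0$ in a right-neighborhood $(Q_0^*, \tilde{Q}_0^*)$ the global minimizer lies in the bifurcation neighborhood, where the constant is excluded as a minimizer by Lemma \ref{lemma-2}, so it coincides with the asymmetric wave; the symmetric branch appears only at $Q_0^{**} > Q_0^*$ and the truncated solitary waves of Theorem \ref{theorem-limit} live at large $|\Lambda|$, so neither interferes. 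The monotone map $\Lambda\mapsto Q_0$ along the branch, with $Q_0=Q_0^*$ at $\Lambda_0$ and $\frac{d}{d\Lambda}Q<0$, sends $(\Lambda_*,\Lambda_0)$ onto such a right-neighborhood, fixing $\tilde{Q}_0^*$.
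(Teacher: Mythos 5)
Your strategy coincides with the paper's own proof: a Lyapunov--Schmidt reduction at $\Lambda_0$ in which the $\mathbb{Z}_2$ reflection symmetry forces a pitchfork, with the direction of bifurcation governed by the sign of the cubic coefficient (your $c$ is exactly $2\Omega$ in the paper's normalization, cf.\ (\ref{solvability-1})), followed by the Shatah--Strauss--Weinstein count in $L^2_c$. But there are two genuine gaps, one of omission and one of flawed reasoning.

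First, the sign $c<0$ is the analytic heart of the existence statement, and you leave it as a plan rather than a proof. The paper establishes it by solving $(-\Delta-\Omega_1^2)\tilde{\Phi}_2 = U^2$ in closed form piecewise, fixing the integration constants through the Kirchhoff conditions and the transcendental identity (\ref{eigenvalue-Omega-1}), and reducing $9\Omega_1^2\langle U^2,\tilde{\Phi}_2\rangle_{L^2}+\|U\|_{L^4}^4$ to the expression (\ref{positomega}), each term of which is negative because $2L\Omega_1<\pi$ (Proposition \ref{prop-root}). Your structural observation --- negative constant-mode contribution to the resolvent pairing versus positive contributions from higher even modes --- is correct but does not by itself decide the sign; without carrying out the closed-form evaluation, the bifurcation could a priori be supercritical and the asymmetric branch would then live on the wrong side of $\Lambda_0$.

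Second, and more seriously, the claim that the slope condition $\frac{d}{d\Lambda}Q(\Phi)<0$ ``persists by continuity from Lemma \ref{lemma-2}'' is incorrect: at a pitchfork the $Q$-versus-$\Lambda$ curves of the two branches meet at an angle, not tangentially. Along the bifurcating branch one has $\Lambda-\Lambda_0=\Omega a^2$ and $Q(\Phi)-Q_0^*=-a^2\bigl(\Omega(L+2\pi)+2\|U\|_{L^2}^2\bigr)+\mathcal{O}(a^3)$, hence
\[
\frac{dQ}{d\Lambda}\Big|_{\rm asym} = -(L+2\pi)-\frac{2\|U\|_{L^2}^2}{\Omega},
\]
which differs from the constant branch's slope $-(L+2\pi)$ by the amount $-2\|U\|_{L^2}^2/\Omega>0$ that does \emph{not} vanish as $\Lambda\to\Lambda_0$. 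The slope condition therefore requires the strictly stronger inequality $\Omega<-2\|U\|_{L^2}^2/(L+2\pi)$, which is precisely (\ref{inequality-to-prove}); the paper devotes a second substantial closed-form computation to it (the decomposition into $I+II+III$ with each term shown negative, again using Proposition \ref{prop-root} and identity (\ref{trascendental-expr-2})). Since $\Omega<0$ alone is compatible with $\frac{dQ}{d\Lambda}>0$ along the new branch, your argument as written cannot conclude that the single negative eigenvalue of $L_+$ is removed in $L^2_c$, and the minimality claim collapses. On the positive side, your closing compactness argument identifying the local constrained minimizer with the global ground state for $Q_0$ slightly above $Q_0^*$ is a genuine supplement: the paper passes from the spectral counts to the ground-state conclusion more quickly, and your limiting argument for ground states as $Q_0\downarrow Q_0^*$ fills in that identification.
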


\begin{proof}
By Proposition \ref{prop-spectrum}, the eigenfunction $U$
corresponding to the eigenvalue $\Omega_1^2$ in $\sigma(-\Delta)$ is odd
with respect to the central point $x = 0$, whereas the constant
solution (\ref{ground-1}) is even. Therefore,
we have the symmetry-breaking bifurcation, which is similar
to the one studied in \cite{KPP}. In order to unfold the bifurcation,
we study how the odd mode $U$ can be continued as $\Lambda$ is defined near
the bifurcation value $\Lambda_0 := -\frac{1}{2}\Omega_1^2$.
We use the explicit expression for $U$ given by (\ref{eigenfunction-Omega-1})
and the characterization of the values of $\Omega_1$ given by Proposition \ref{prop-root}.
By Remark \ref{remark-roots}, $\Omega_1^2$ is a simple eigenvalue in $\sigma(-\Delta)$.

Using a simplified version of the Lyapunov--Schmidt reduction method \cite{KPP},
we consider a regular perturbation expansion for
solutions of the stationary NLS equation (\ref{statNLS})
near the constant solution (\ref{ground-1}) at $\Lambda = \Lambda_0$. Thus, we expand
\begin{equation}
\label{bif-sol}
\Lambda = -\frac{1}{2} \Omega_1^2 + a^2 \Omega + \mathcal{O}(a^4), \quad
\Phi(x) = \frac{1}{2} \Omega_1 + a U(x) + a^2 \Phi_2(x) + a^3 \Phi_3(x) + \mathcal{O}_{H^2}(p^4),
\end{equation}
where $a$ is a small parameter for the amplitude of the critical odd eigenfunction $U$
of the operator $-\Delta$ in $\mathcal{D}(\Delta)$,
whereas the corrections $\Omega$ and $\{ \Phi_n \}_{n \geq 2} \in \mathcal{D}(\Delta)$
are defined uniquely under the constraints $\langle U, \Phi_k \rangle_{L^2} = 0$, $k \geq 2$.
Note that the decomposition (\ref{bif-sol}) already incorporates the near-identity transformation
that removes quadratic terms in $a$ and ultimately leads to the normal-form equation,
derived in a similar context in \cite{KPP}.

At $a^2$, we obtain the inhomogeneous linear equation
\begin{equation}
\label{Phi-2-inhom-equation}
(-\Delta - \Omega_1^2) \Phi_2 = \frac{1}{2} \Omega_1 (\Omega + 6 U^2).
\end{equation}
Since $U$ is odd, the right-hand-side is even.  Thus, the Fredholm solvability condition is
satisfied and there exists a unique even solution for $\Phi_2$, which can be represented
in the form
\begin{equation}
\label{Phi-2}
\Phi_2(x) = -\frac{\Omega}{2 \Omega_1} + 3 \Omega_1 \tilde{\Phi}_2(x),
\end{equation}
where $\tilde{\Phi}_2$ is uniquely defined from the linear inhomogeneous equation
\begin{equation}
\label{solvability-0}
(-\Delta - \Omega_1^2) \tilde{\Phi}_2 = U^2.
\end{equation}

At $a^3$, we obtain another inhomogeneous linear equation
\begin{equation}
(-\Delta - \Omega_1^2) \Phi_3 = \Omega U + 6 \Omega_1 U \Phi_2 + 2 U^3.
\end{equation}
The right-hand-side is now odd and the Fredholm solvability condition produces a nontrivial
equation for $\Omega$:
\begin{equation}
\label{solvability}
\Omega \| U \|_{L^2}^2 + 6 \Omega_1 \langle U^2, \Phi_2 \rangle_{L^2} + 2 \| U \|_{L^4}^4 = 0.
\end{equation}
Substituting (\ref{Phi-2}) into (\ref{solvability}), we obtain
\begin{equation}
\label{solvability-1}
\Omega \| U \|_{L^2}^2 = 9 \Omega_1^2 \langle U^2, \tilde{\Phi}_2 \rangle_{L^2} + \| U \|_{L^4}^4.
\end{equation}

We need to show that the right-hand-side of equation (\ref{solvability-1}) is negative, which
yields $\Omega < 0$. In view of the decomposition (\ref{bif-sol}),
for $a$ sufficiently small, the new solution $\Phi$ represents a positive asymmetric
standing wave satisfying the stationary NLS equation (\ref{statNLS}) with $\Lambda \lesssim \Lambda_0$.
This would imply the first assertion of the lemma.

Using the explicit representations (\ref{eigenfunction-Omega-1}) and (\ref{eigenvalue-Omega-1}),
we obtain an explicit solution of the linear inhomogeneous equation (\ref{solvability-0}):
\begin{equation*}
\quad \quad \quad
\tilde{\Phi}_2(x) = \left\{ \begin{array}{l}
A \cos(\Omega_1 x) - \frac{1}{6 \Omega_1^2} \left[ \cos(2 \Omega_1 x) + 3 \right], \quad \quad \quad
\quad \quad \quad \quad \quad \quad \quad \quad \quad \quad \quad \;\; x \in I_0, \\
B \cos(\Omega_1 (x-L-\pi)) + \frac{\sin^2(L \Omega_1)}{6 \Omega_1^2 \cos^2(\pi \Omega_1)} \left[
\cos(2 \Omega_1(x-L-\pi)) - 3 \right], \quad \quad x \in I_+, \end{array} \right.
\end{equation*}
where $A$ and $B$ are constants of integration to be defined, the symmetry $\tilde{\Phi}_2(-x) = \tilde{\Phi}_2(x)$
can be used, and the homogeneous sinusoidal solutions are thrown away since $\sin(\pi \Omega_1) \neq 0$.

Using Kirchhoff boundary conditions (\ref{bc-2}), we uniquely determine constants $A$ and $B$ from
the following linear system of algebraic equations:
\begin{eqnarray}
\label{lin-syst-A-B} \quad \quad
\left[ \begin{matrix} \cos(L \Omega_1) & - \cos(\pi \Omega_1) \\
\sin(L \Omega_1) & 2 \sin(\pi \Omega_1) \end{matrix} \right]
\left[ \begin{matrix} A \\ B \end{matrix} \right]
= \frac{1}{6 \Omega_1^2}
\left[ \begin{matrix}
3 + \cos(2 L \Omega_1) + \frac{\sin^2(L \Omega_1)}{\cos^2(\pi \Omega_1)} \left( \cos(2 \pi \Omega_1) - 3 \right) \\
2 \sin(2 L \Omega_1) - 4 \frac{\sin^2(L \Omega_1)}{\cos^2(\pi \Omega_1)} \sin(2 \pi \Omega_1)
\end{matrix} \right].
\end{eqnarray}
Using the transcendental equation (\ref{eigenvalue-Omega-1}), we can see that the second entry in
the right-hand side of (\ref{lin-syst-A-B})
is zero. As a result, the second equation of the system (\ref{lin-syst-A-B}) yields
\begin{equation}
\label{transcendental-expr-0}
\sin(L \Omega_1) A + 2 \sin(\pi \Omega_1) B = 0.
\end{equation}
Let us prove that the system (\ref{lin-syst-A-B}) admits the unique solution in the following explicit form:
\begin{equation}
\label{lin-solution-A-B}
A = \frac{1}{2 \Omega_1^2} \cos^3(L \Omega_1), \quad B = -\frac{\sin^2(L \Omega_1) \cos^2(L \Omega_1)}{2 \Omega_1^2 \cos(\pi \Omega_1)}.
\end{equation}
Indeed, the constraint (\ref{transcendental-expr-0}) is satisfied with the solution (\ref{lin-solution-A-B}). Furthermore,
the first equation of the system (\ref{lin-syst-A-B}) is satisfied with the exact solution (\ref{lin-solution-A-B}) if and only if
the following transcendental equation is met:
\begin{equation}
\label{trascendental-expr-1}
1 - \frac{\sin^2(L \Omega_1)}{\cos^2(\pi \Omega_1)} + \frac{1}{3} \left[ \cos(2 L \Omega_1)
+ \frac{\sin^2(L \Omega_1)}{\cos^2(\pi \Omega_1)} \cos(2 \pi \Omega_1) \right] = \cos^2(L \Omega_1).
\end{equation}
Using the transcendental equation (\ref{eigenvalue-Omega-1}), we rewrite equation (\ref{trascendental-expr-1}) in the equivalent form
\begin{equation}
\label{trascendental-expr-2}
1 - \frac{\sin^2(L \Omega_1)}{\cos^2(\pi \Omega_1)} = \frac{3}{4} \cos^2(L \Omega_1),
\end{equation}
which is satisfied identically, thanks again to the transcendental equation (\ref{eigenvalue-Omega-1}).

Using (\ref{lin-solution-A-B}) in the expression for $\tilde{\Phi}_2$,
we rewrite the expression for $U^2 + 9 \Omega_1^2 \tilde{\Phi}_2$ explicitly as
\begin{equation*}
\left\{ \begin{array}{l}
\frac{9}{2} \cos^3(L \Omega_1) \cos(\Omega_1 x) -2  \cos(2 \Omega_1 x) - 4, \quad \quad \quad \quad \quad \quad \quad
\quad \quad \quad \quad \quad \quad \quad \quad \quad \quad \quad \quad \;\; x \in I_0, \\
\frac{\sin^2(L \Omega_1)}{\cos^2(\pi \Omega_1)} \left[ -\frac{9}{2} \cos^2(L \Omega_1) \cos(\pi \Omega_1) \cos(\Omega_1 (x-L-\pi))
+ 2 \cos(2 \Omega_1(x-L-\pi)) - 4 \right], \quad  x \in I_+. \end{array} \right.
\end{equation*}
After computations of the integrals and simplifications with the help of the transcendental equation (\ref{eigenvalue-Omega-1}),
the right-hand side of equation (\ref{solvability-1}) is simplified to the form
\begin{eqnarray}
\label{positomega}
&  9 \Omega_1^2 \langle U^2, \tilde{\Phi}_2 \rangle_{L^2}  + \| U \|_{L^4}^4  \\
\notag
& = \hspace{.1cm} - 3 L \left( 1 - \frac{\sin (2 L \Omega_1)}{ 2 L \Omega_1} \right)
-6 \pi \frac{\sin^4(L \Omega_1)}{\cos^4(\pi \Omega_1)}
   -  \frac{3 \sin^3(L \Omega_1) \cos(L \Omega_1)}{\Omega_1 \cos^2(\pi \Omega_1)}
\left[ 1 + 2 \cos^2(L \Omega_1) \right].
\end{eqnarray}
By Proposition \ref{prop-root}, we have $2 L \Omega_1 < \pi$ for every $L$,
so that every term in (\ref{positomega}) is negative. Therefore, $\Omega < 0$ in \eqref{solvability-1}.

Thus, the first assertion of the lemma is proved. In order to prove the second assertion of the lemma,
which states that the positive asymmetric standing wave $\Phi$ given by the decomposition (\ref{bif-sol})
is a minimizer of the constrained minimization problem (\ref{minimizer}) for $Q_0 \gtrsim Q_0^*$, we need to
compute the negative eigenvalues of the operators $L_+$ and $L_-$ in (\ref{L-plus}) and (\ref{L-minus})
for $\Phi$. Since $L_- \Phi = 0$ and $\Phi$ is positive, the zero
eigenvalue of $L_-$ is the smallest eigenvalue of $L_-$. The smallest eigenvalue is simple. It corresponds
to the phase rotation of the standing wave $\Phi$.

Since $\Phi$ bifurcates from the constant solution (\ref{ground-1}), the operator $L_+$ has a unique
negative eigenvalue and a simple zero eigenvalue at $\Lambda = \Lambda_0$. We shall now construct a regular perturbation
expansion for small $|\Lambda - \Lambda_0|$ in order to prove that the zero eigenvalue becomes
a small positive eigenvalue for the bifurcating solution (\ref{bif-sol}). Therefore, we expand
\begin{eqnarray*}
L_+ = - \Delta - \Omega_1^2 - 6 a \Omega_1 U - 6 a^2 \Omega_1 \Phi_2 - 6 a^2 U^2 - a^2 \Omega +
\mathcal{O}_{L^{\infty}}(a^3).
\end{eqnarray*}
The zero eigenvalue of $-\Delta - \Omega_1^2$ corresponds again to the eigenfunction $U$.
For $a$ sufficiently small, we expand the eigenvalue $\lambda$ and the eigenfunction $u$
of the operator $L_+$:
\begin{equation}
\label{bif-eig}
\lambda = a^2 \Lambda + \mathcal{O}(a^3), \quad
u(x) = U(x) + a U_1(x) + a^2 U_2(x) + \mathcal{O}_{H^2}(a^3),
\end{equation}
where the  corrections $\Lambda$ and $\{ U_n \}_{n \geq 1} \in \mathcal{D}(\Delta)$ are defined uniquely
under the constraints $\langle U, U_k \rangle_{L^2} = 0$, $k \geq 1$.
At $a$, we obtain the inhomogeneous linear equation
\begin{equation}
(-\Delta - \Omega_1^2) U_1 = 6 \Omega_1 U^2,
\end{equation}
which has the unique even solution $U_1 = 6 \Omega_1 \tilde{\Phi}_2$.
At $a^2$, we obtain the inhomogeneous linear equation
\begin{equation}
(-\Delta - \Omega_1^2) U_2 = 6 \Omega_1 U U_1 + 6 \Omega_1 \Phi_2 U + 6 U^3 - \Omega U + \Lambda U.
\end{equation}
With the account of (\ref{Phi-2}), (\ref{solvability-1}), and $U_1 = 6 \Omega_1 \tilde{\Phi}_2$,
the Fredholm solvability condition yields
\begin{equation}
\label{solvability-2}
\Lambda \| U \|_{L^2}^2 = -18 \Omega_1^2 \langle U^2, \tilde{\Phi}_2 \rangle_{L^2} - 2 \| U \|_{L^4}^4.
\end{equation}
Comparison with (\ref{solvability-1}) yields $\Lambda = - 2 \Omega$.
Since we have already proved that $\Omega < 0$, we obtain $\Lambda > 0$, so that
for $|\Lambda - \Lambda_0|$ sufficiently small, the operator $L_+$ has a small positive
eigenvalue bifurcating from the zero eigenvalue as $\Lambda \to \Lambda_0$. Thus,
the operator $L_+$ has only one simple negative eigenvalue for $\Lambda \lesssim \Lambda_0$.

It remains to show that $Q$ computed at the positive asymmetric standing wave $\Phi$
given by the decomposition (\ref{bif-sol}) is an increasing function of the amplitude
parameter $a$. In this case, the slope condition $\frac{d}{d\Lambda} Q(\Phi) < 0$ holds and
the only negative eigenvalue of operator $L_+$ is removed by the constraint in $L^2_c$
defined by (\ref{constrained-L-2}). From (\ref{charge}), (\ref{bif-sol}), (\ref{Phi-2-inhom-equation}), and (\ref{solvability-0}),
we obtain
\begin{eqnarray*}
Q(\Phi) & = & Q_0^* + a^2 \left( \Omega_1 \langle 1, \Phi_2 \rangle_{L^2} + \| U \|^2_{L^2} \right) + \mathcal{O}(a^3) \\
& = & Q_0^* - a^2 \left( \Omega (L + 2\pi) + 2 \| U \|^2_{L^2} \right) + \mathcal{O}(a^3).
\end{eqnarray*}
We observe $\Omega (L + 2\pi) + 2 \| U \|^2_{L^2} < 0$ if and only if
\begin{equation}
\label{inequality-to-prove}
(L+2 \pi ) (9 \Omega_1^2 \langle U^2, \tilde{\Phi}_2 \rangle_{L^2}  + \| U \|_{L^4}^4) + 2 \| U \|_{L^2}^4  <0.
\end{equation}
Using \eqref{positomega} and
\begin{eqnarray*}
\| U \|_{L^2}^2 & = & L - \sin(L \Omega_1) \cos(L \Omega_1) + 2 \frac{\sin^2(L \Omega_1)}{\cos^2(\pi \Omega_1)}
\left[ \pi + \sin(\pi \Omega_1) \cos(\pi \Omega_1) \right] \\
& = & L + 2 \pi \frac{\sin^2(L \Omega_1)}{\cos^2(\pi \Omega_1)},
\end{eqnarray*}
the left-hand side of (\ref{inequality-to-prove}) can be written as
\begin{eqnarray*}
 &  -3(L + 2 \pi ) \left\{   L \left( 1 - \frac{\sin (2 L \Omega_1)}{ 2 L \Omega_1} \right)    + 2\pi
\frac{\sin^4(L \Omega_1)}{\cos^4(\pi \Omega_1)}  + \frac{ \sin^3(L \Omega_1) \cos(L \Omega_1)}{\Omega_1 \cos^2(\pi \Omega_1)}
\left[ 1 + 2 \cos^2(L \Omega_1) \right]  \right\}    \\
& \hspace{1cm} + 2 \left[ L + 2 \pi   \frac{\sin^2(L \Omega_1)}{\cos^2(\pi \Omega_1)}   \right]^2.
\end{eqnarray*}
We regroup these terms as the sum $I + II + III$, where
\begin{align*}
I & =   -3(L + 2 \pi ) \left[   \frac{3L}{4}  + 2\pi
\frac{\sin^4(L \Omega_1)}{\cos^4(\pi \Omega_1)} \right] + 2 \left[ L + 2 \pi   \frac{\sin^2(L \Omega_1)}{\cos^2(\pi \Omega_1)}   \right]^2 \\
II & = -\frac{3}{4} (L + 2 \pi ) L \left[ 1 - \frac{\sin (2 L \Omega_1)}{ 2 L \Omega_1} \right] \\
III & =   -3(L + 2 \pi ) \frac{\sin(L \Omega_1) \cos(L \Omega_1)}{\Omega_1} \left\{
\frac{ \sin^2(L \Omega_1)}{\cos^2(\pi \Omega_1)}
\left[ 1 + 2 \cos^2(L \Omega_1) \right] - \frac{3}{4} \right\}.
\end{align*}
After expanding the brackets, the first term becomes
$$
I = -\frac{1}{4} L^2 -\frac{1}{2} \pi L \left[ 9 - 16 \frac{\sin^2(L \Omega_1)}{\cos^2(\pi \Omega_1)}
+ 12 \frac{\sin^4(L \Omega_1)}{\cos^4(\pi \Omega_1)} \right] - 4 \pi^2 \frac{\sin^4(L \Omega_1)}{\cos^4(\pi \Omega_1)},
$$
where every term is negative because $f(x) = 9 - 16 x^2 + 12 x^4 \geq \frac{11}{3} > 0$. Hence, $I < 0$.
Furthermore, $II < 0$ holds. Since $L \Omega_1 < \frac{\pi}{2}$ by Proposition \ref{prop-root},
then $III < 0$ if and only if $g(\Omega_1) > 0$, where
$$
g(\Omega_1) := \frac{ \sin^2(L \Omega_1)}{\cos^2(\pi \Omega_1)}
\left[ 1 + 2 \cos^2(L \Omega_1) \right] - \frac{3}{4}.
$$
Using equation (\ref{trascendental-expr-2}), we rewrite the left-hand-side as follows:
\begin{eqnarray*}
g(\Omega_1) & = & \left[ 1 - \frac{3}{4} \cos^2(L \Omega_1) \right]
\left[ 1 + 2 \cos^2(L \Omega_1) \right] - \frac{3}{4}\\
& = & \frac{1}{4} \left[ 1 - \cos^2(L \Omega_1) \right] \left[ 1 + 6 \cos^2(L \Omega_1) \right],
\end{eqnarray*}
from which it follows that $g(\Omega_1) > 0$ for every $L > 0$. Thus, $III < 0$, so that $Q(\Phi)$ is an increasing
function of the amplitude parameter $a$.

Since $\frac{d}{d\Lambda} Q(\Phi) < 0$, the operator $L_+$ does not have a negative eigenvalue
in the constrained space $L^2_c$, so that $\Phi$ is a ground state of the constrained minimization
problem (\ref{minimizer}). The statement of the lemma is proved.
\end{proof}

\begin{remark}
\label{remark-4}
Asymptotic expansions similar to the ones used in the proof of Lemma \ref{lemma-3} can be
developed for the second (symmetry-preserving) bifurcation of the constant standing wave (\ref{ground-1})
at $\Lambda = -\frac{1}{2} \omega_1^2$ or $Q_0 = Q_0^{**} := \frac{1}{2} (L+2\pi) \omega_1^2$,
where the positive eigenvalue $\omega_1^2$ in $\sigma(-\Delta)$
corresponds to the even eigenfunction. As a result of this bifurcation, a new family of
positive symmetric standing waves exists  for $\Lambda \lesssim -\frac{1}{2} \omega_1^2$.
The positive symmetric wave is not, however, the ground state of the constrained minimization problem
(\ref{minimizer}) for $Q_0 \approx Q_0^{**}$, because the operator $L_+$ has two negative
eigenvalue and a simple zero eigenvalue at $\Lambda = -\frac{1}{2} \omega_1^2$.
Even if the zero eigenvalue becomes small positive eigenvalue for $\Lambda \lesssim - \frac{1}{2} \omega_1^2$
and if one negative eigenvalue is removed by a constraint in $L^2_c$, there operator $L_+$ still has one
negative eigenvalue in $L^2_c$.
\end{remark}

\begin{remark}
Lemma \ref{lemma-3} and the result stated in Remark \ref{remark-4} prove the second assertion of Theorem \ref{theorem-graph}.
\end{remark}

\section{Proof of Theorem \ref{theorem-limit}}

Here we study standing wave solutions of the stationary NLS equation (\ref{statNLS}) in the limit $\Lambda \to -\infty$.
The standing wave solutions are represented asymptotically
by a solitary wave of the stationary NLS equation on the infinite line.
We use the scaling transformation
\begin{equation}
\label{scaling}
\Phi(x) = |\Lambda|^{\frac{1}{2}} \Psi(z), \quad z = |\Lambda|^{\frac{1}{2}} x,
\end{equation}
and consider the positive solutions of the stationary NLS equation (\ref{statNLS}) for $\Lambda < 0$.
The stationary problem can then be written in the equivalent form
\begin{equation}
\label{statNLS-limit-graph}
-\Delta_z \Psi + \Psi - 2 \Psi^3 = 0, \qquad z \in J_- \cup J_0 \cup J_+,
\end{equation}
where $\Delta_z$ is the Laplacian operator in variable $z$ and the
intervals on the real line are now given by
$$
J_- := \left[ - (L+2\pi) \mu, - L \mu \right], \quad
J_0 := \left[ - L \mu, L \mu \right], \quad
J_+ := \left[ L \mu,(L+2\pi) \mu \right],
$$
with $\mu := |\Lambda|^{\frac{1}{2}}$. The Kirchhoff boundary conditions (\ref{bc-1}) and (\ref{bc-2}) are to be
used at the two junction points.

The stationary NLS equation $-\Delta_z \Psi + \Psi - 2 \Psi^3 = 0$ on the infinite
line is satisfied by the solitary wave
\begin{equation}
\label{NLS-soliton}
\Psi_{\infty}(z) = {\rm sech}(z), \quad z \in \mathbb{R}.
\end{equation}
To yield a suitable approximation of the stationary equation (\ref{statNLS-limit-graph}) on the dumbbell graph
$J_- \cup J_0 \cup J_+$, we have to satisfy the Kirchhoff boundary conditions at the two junction points.
Two particular configurations involving a single solitary wave
will be considered below: one where the solitary wave is located in the central line segment and
the other one where the solitary wave is located in one of the two loops.
As follows from numerical results reported on Figures \ref{stat2}, \ref{stat3}, \ref{statcomp}, and \ref{stat8} below,
these configurations are continuations of the two families of
positive non-constant standing waves in Lemma \ref{lemma-3} and Remark \ref{remark-4}.

\subsection{Symmetric solitary wave}

We are looking for the symmetric standing wave
\begin{equation}
\label{symmetry-even}
\Psi_0(-z) = \Psi_0(z), \quad z \in J_0, \quad \Psi_-(-z) = \Psi_+(z), \quad z \in J_+.
\end{equation}
We will first provide an approximation of the solitary wave
with the required Kirchhoff boundary conditions by using
the limiting solitary wave (\ref{NLS-soliton}). Then, we will
develop analysis based on the fixed-point iterations
to control the correction terms to this approximation.
The following lemma summarizes the corresponding result.

\begin{lemma}
\label{lemma-soliton-line}
There exist $\mu_0 > 0$ sufficiently large and a positive $\mu$-independent constant $C$
such that the stationary NLS equation (\ref{statNLS-limit-graph})
for $\mu \in (\mu_0,\infty)$ admits a symmetric standing wave
$\Psi$ near $\Psi_{\infty}$ satisfying the estimate
\begin{equation}
\label{estimate-1}
\| \Psi - \Psi_{\infty} \|_{L^{\infty}(J_- \cup J_0 \cup J_+)} \leq C \mu^{3/2} e^{-L \mu}.
\end{equation}
\end{lemma}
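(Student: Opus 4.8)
The plan is to run a Lyapunov--Schmidt / fixed-point construction around the truncated soliton, using the even symmetry (\ref{symmetry-even}) to kill the dangerous translational mode. First I would reduce to the half-graph: by (\ref{symmetry-even}) it suffices to solve (\ref{statNLS-limit-graph}) on $[0,L\mu]\cup J_+$ with the Neumann condition $\Psi'(0)=0$ at the centre and the Kirchhoff conditions (\ref{bc-2}) at $z=L\mu$, extending afterwards to $J_-$ and the left half of $J_0$ by reflection. Throughout I write $s:=\sech(L\mu)$, so that $s=O(e^{-L\mu})$ and $s'=-s\tanh(L\mu)$ are exponentially small.

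Next I construct an explicit approximation $\Psi_{\mathrm{app}}$. On the central half-segment I set $\Psi_{\mathrm{app}}(z)=\sech(z)$, which solves the ODE in (\ref{statNLS-limit-graph}) exactly and satisfies $\Psi_{\mathrm{app}}'(0)=0$. On the ring, writing $y=z-L\mu\in[0,2\pi\mu]$, I set
$$\Psi_{\mathrm{app}}(z)=\frac{\sech(L\mu)}{\cosh(\pi\mu)}\,\cosh(y-\pi\mu),$$
the unique solution of the linearised equation $-\Psi''+\Psi=0$ that is continuous across the junction (value $s$ at both ends $y=0,2\pi\mu$) and symmetric under the ring reflection about $y=\pi\mu$. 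By construction $\Psi_{\mathrm{app}}$ is continuous and closes up on the ring exactly, the differential equation holds up to the cubic term $-2\Psi_{\mathrm{app}}^3=O(e^{-3L\mu})$ on the ring, and the only genuine defect is in the derivative Kirchhoff condition: a direct computation gives the mismatch $R:=\Psi_{\mathrm{app}}'(L\mu)-[\Psi_{\mathrm{app}}'(L\mu^+)-\Psi_{\mathrm{app}}'((L+2\pi)\mu)]=s\,(2\tanh(\pi\mu)-\tanh(L\mu))=O(e^{-L\mu})$. I would absorb this jump either by a unit-scale cutoff that forces $\Psi_{\mathrm{app}}\in\mathcal{D}(\Delta)$ at the cost of a localised $L^2$ residual of size $O(e^{-L\mu})$, or by passing to the weak formulation in $\mathcal{E}(\Delta)$, where $R$ enters as the bounded functional $v\mapsto R\,v(L\mu)$ of $H^{-1}$-norm $O(e^{-L\mu})$.

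With $\Psi=\Psi_{\mathrm{app}}+\varphi$, the correction solves $L_+\varphi=\mathrm{Res}+6\Psi_{\mathrm{app}}\varphi^2+2\varphi^3$, where $L_+:=-\Delta_z+1-6\Psi_{\mathrm{app}}^2$ and $\|\mathrm{Res}\|=O(e^{-L\mu})$ in the relevant norm. The crux — and the step I expect to be the main obstacle — is the uniform invertibility of $L_+$. On the whole line the operator $-\partial_z^2+1-6\sech^2(z)$ carries the translational zero mode $\sech(z)\tanh(z)$, which is \emph{odd} and which on the truncated graph degenerates into an eigenvalue of size $O(e^{-cL\mu})$; were it present it would destroy the estimate. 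Restricting to functions even under (\ref{symmetry-even}) removes it exactly: on the even subspace the line operator has spectrum $\{-3\}\cup[1,\infty)$ with a gap around $0$, the ring contributes only eigenvalues of $-\partial^2+1$ that are $\ge 1$, and the Kirchhoff coupling preserves the gap. Thus $0\notin\sigma(L_+|_{\mathrm{even}})$ uniformly in $\mu$ and $\|L_+^{-1}\|=O(1)$. Proving this gap rigorously — controlling the coupled ring/segment spectrum and ruling out small even eigenvalues for all large $\mu$ — is the delicate point.

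Given the uniform bound on $L_+^{-1}$ over the even subspace, I would close the argument by contraction: the map $\varphi\mapsto L_+^{-1}(\mathrm{Res}+6\Psi_{\mathrm{app}}\varphi^2+2\varphi^3)$ is a contraction on a ball of $H^2_{\mathrm{even}}$ of radius $O(e^{-L\mu})$ for $\mu$ large, since the source is $O(e^{-L\mu})$ and the quadratic and cubic terms are higher order there. Elliptic regularity on each edge upgrades the $H^2$ bound to a pointwise one, and because the embedding and operator bounds over the expanding edges of length $O(\mu)$ cost only polynomial-in-$\mu$ factors, one arrives at $\|\Psi-\Psi_\infty\|_{L^\infty}\le C\mu^{3/2}e^{-L\mu}$ once we note that $\Psi_{\mathrm{app}}-\Psi_\infty$ is itself $O(e^{-L\mu})$ uniformly on $J_-\cup J_0\cup J_+$ (so the stated power of $\mu$ is a convenient, non-sharp bookkeeping bound). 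The fixed point automatically lies in $\mathcal{D}(\Delta)$ and inherits the even symmetry, giving the desired symmetric standing wave.
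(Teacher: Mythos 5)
Your proposal is correct in outline and shares the paper's overall architecture --- an explicit approximation that equals the truncated soliton on the central segment, followed by a contraction-mapping argument whose linchpin is uniform invertibility of the linearized operator on the subspace of functions even under (\ref{symmetry-even}) --- but your approximation on the rings is genuinely different, and it is precisely the one the paper rejects. The paper observes that a solution of $G_+'' - G_+ = 0$ on $J_+$ cannot satisfy the three Kirchhoff conditions (two continuity, one derivative jump), so it abandons the ODE altogether and takes the quadratic polynomial (\ref{approx-3}), which satisfies all Kirchhoff conditions exactly at the price of a larger amplitude $O(\mu e^{-L\mu})$ and an $L^2$ residual $O(\mu^{3/2} e^{-L\mu})$; that loss is the origin of the factor $\mu^{3/2}$ in (\ref{estimate-1}). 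You instead keep the $\cosh$ profile (which solves the linear ODE and respects continuity), accept the $O(e^{-L\mu})$ mismatch $R$ in the derivative condition, and repair it either by a localized unit-scale correction putting the approximation back in $\mathcal{D}(\Delta_z)$, or by passing to the weak formulation where $R$ enters as a point functional, which in one dimension is $H^{-1}$-bounded uniformly in $\mu$. Both repairs are sound (the cutoff alters neither the continuity conditions nor the symmetry, and the weak solution upgrades to a strong one in $\mathcal{D}(\Delta_z)$ by the natural-boundary-condition bootstrapping the paper invokes in the introduction), and your route buys a sharper residual and hence a bound $O(e^{-L\mu})$ rather than $O(\mu^{3/2} e^{-L\mu})$ --- an improvement the paper only obtains later, in Lemma \ref{remark-soliton-line}, through the cnoidal-function construction. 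One caveat: the step you call the delicate point, namely the uniform spectral gap of the linearization on the even subspace, is asserted rather than proved in your sketch; but the paper's own proof treats this step at essentially the same level of rigor (pointwise convergence of $L_\mu$ to $L_\infty$, which is invertible on even functions, is said to imply uniform invertibility of $L_\mu$), so this is not a gap relative to the paper.
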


\begin{proof}
The proof consists of two main steps.\\

{\bf Step 1: Approximation.} We denote the approximation of $\Psi$ in $J_0$ by $G_0$ and set it to
\begin{equation}
\label{approx-1}
G_0(z) = {\rm sech}(z), \quad z \in J_0.
\end{equation}
The approximation of $\Psi$ in $J_+$, denoted by $G_+$, cannot be defined from a solution
of the linear equation $G_+'' - G_+ = 0$ because the
second-order differential equation does not provide three parameters
to satisfy the three Kirchhoff boundary conditions:
\begin{equation}
\label{approx-2}
G_+(L \mu) = G_+((L+2\pi) \mu) = G_0(L \mu), \quad
G'_+(L \mu) - G_+'((L+2\pi) \mu) = G_0'(L \mu).
\end{equation}
Instead, we construct a polynomial approximation to the Kirchhoff boundary conditions,
which does not solve any differential equation. Using quadratic polynomials,
we satisfy the boundary conditions (\ref{approx-2}) for the solitary wave
(\ref{approx-1}) with the following approximation
\begin{equation}
\label{approx-3}
G_+(z) = {\rm sech}(L \mu) \left[ 1 + \frac{\tanh(L \mu)}{4 \pi \mu} (z - L \mu) (z - (L+2\pi) \mu) \right].
\end{equation}
Note that the maximum of $G_+$ occurs at the middle point of $J_+$ at $z = (L+\pi) \mu$,
and for sufficiently large $\mu$, we have
\begin{equation}
\label{approx-4}
\| G_+ \|_{L^{\infty}(J_+)} \leq C \mu e^{-L \mu}
\end{equation}
for a positive $\mu$-independent constant $C$. Also, the first and second derivatives of $G_+$ do not exceed the
upper bound in (\ref{approx-4}).\\

{\bf Step 2: Fixed-point arguments.}
Next, we consider the correction terms to the approximation $G$ in (\ref{approx-1}) and (\ref{approx-3}).
Using the decomposition $\Psi = G + \psi$, we obtain the persistence problem in the form
\begin{equation}
\label{persistence-problem-graph}
L_{\mu} \psi = {\rm Res}(G) + N(G,\psi),
\end{equation}
where $L_{\mu} := -\Delta_z + 1 - 6 G(z)^2$, ${\rm Res}(G) := \Delta_z G - G + 2 G^3$, and
$N(G,\psi) := 6 G \psi^2 + 2 \psi^3$.
Since the approximation $G$ satisfies Kirchhoff boundary conditions, which are linear and homogeneous,
the correction term $\psi$ is required to satisfy the same Kirchhoff boundary conditions.
The residual term ${\rm Res}(G)$
is supported in $J_-$ and $J_+$ and it satisfies the same estimate as in (\ref{approx-4}).
Transferring this estimate to the $L^2$ norm, since the length of $J_+$ grows linearly in $\mu$,
we have for all sufficiently large $\mu$,
\begin{equation}
\label{approx-5}
\| {\rm Res}(G) \|_{L^2(J_- \cup J_0 \cup J_+)} \leq C \mu^{3/2} e^{-L \mu},
\end{equation}
where the positive constant $C$ is $\mu$-independent.

The operator $L_{\mu}$ is defined on $L^2(J_- \cup J_0 \cup J_+)$ with the domain in $\mathcal{D}(\Delta_z)$,
that incorporates homogeneous Kirchhoff boundary conditions. As $\mu \to \infty$, the operator $L_{\mu}$
converges pointwise to the operator
$$
L_{\infty} := -\frac{d^2}{dz^2} + 1 - 6 {\rm sech}^2(z) : \; H^2(\R) \to L^2(\R),
$$
which has a one-dimensional kernel spanned
by $\Psi'_{\infty}(z)$, whereas the rest of the spectrum of $L_{\infty}$ includes an isolated eigenvalue
at $-3$ and the continuous spectrum for $[1,\infty)$. Since the operator $L_{\infty}$ is invertible in the space of
even functions, it follows that the operator $L_{\mu} : \mathcal{D}(\Delta_z) \to L^2(J_- \cup J_0 \cup J_+)$,
is also invertible with a bounded inverse on the space of even functions if $\mu$ is sufficiently large.
In other words, there is a positive $\mu$-independent constant $C$ such that for every even $f \in L^2(J_- \cup J_0 \cup J_+)$
and sufficiently large $\mu$, the even function $L_{\mu}^{-1} f$ satisfies the estimate
\begin{equation}
\label{approx-6}
\| L_{\mu}^{-1} f \|_{H^2(J_- \cup J_0 \cup J_+)} \leq C \| f \|_{L^2(J_- \cup J_0 \cup J_+)}.
\end{equation}
Hence we can analyze the fixed-point problem
\begin{equation}
\label{fixed-point-graph}
\psi = L_{\mu}^{-1} \left[ {\rm Res}(G) + N(G,\psi) \right], \quad \psi \in \mathcal{D}(\Delta_z)
\end{equation}
with the contraction mapping method. By using (\ref{approx-5}), (\ref{approx-6}), and the Banach
algebra properties of $H^2(J_- \cup J_0 \cup J_+)$ in the estimates of the nonlinear term $N(G,u)$,
we deduce the existence of a small unique solution $\psi \in \mathcal{D}(\Delta_z)$
of the fixed-point problem (\ref{fixed-point-graph}) satisfying the estimate
\begin{equation}
\label{estimate-fixed-point}
\| \psi \|_{H^2(J_- \cup J_0 \cup J_+)} \leq C \mu^{3/2} e^{-L \mu},
\end{equation}
for sufficiently large $\mu$ and a positive $\mu$-independent constant $C$.
By the construction above, there exists a solution $\Psi = G + \psi$ of
the stationary NLS equation (\ref{statNLS-limit-graph}) that is close
to the solitary wave (\ref{NLS-soliton}) placed symmetrically in the central line segment.
The estimate (\ref{estimate-1}) is obtained from (\ref{estimate-fixed-point}) by Sobolev's
embedding of $H^2(J_- \cup J_0 \cup J_+)$ to $L^{\infty}(J_- \cup J_0 \cup J_+)$.
\end{proof}

\begin{remark}
The method in the proof of Lemma \ref{lemma-soliton-line}
cannot be used to argue that $\Psi$ is positive, although
positivity of $\Psi$ is strongly expected. In particular, $G_+$ is not positive on $J_+$
and the correction term $\psi_+$ is comparable with $G_+$ in $J_+$.
Similarly, the approximation $G$ of the solution $\Psi$ is not unique, although
for every $G$, there exists a unique correction $\psi$ by the contraction mapping
method used in analysis of the fixed-point problem (\ref{fixed-point-graph}). We will obtain a better result
in Lemma \ref{remark-soliton-line} with a more sophisticated analytical technique in order to
remove these limitations of Lemma \ref{lemma-soliton-line}.
\end{remark}

\subsection{Solitary wave in the ring}

We are now looking for an approximation of the solution $\Psi$ of the stationary
NLS equation (\ref{statNLS-limit-graph}), which represents as $\mu \to \infty$
a solitary wave residing in one of the rings, e.g. in $J_+$.
Because of the Kirchhoff boundary conditions in $J_+$, the approximation of $\Psi$ in $J_+$,
denoted by $G_+$, must be symmetric with respect to the middle point in $J_+$. Therefore,
we could take
\begin{equation}
\label{approx-1-ring}
G_+(z) = {\rm sech}(z - L \mu - \pi  \mu), \quad z \in J_+.
\end{equation}
However, the method used in the proof of Lemma \ref{lemma-soliton-line} fails
to continue the approximation (\ref{approx-1-ring}) with respect to finite values of
parameter $\mu$. Indeed, the linearization operator $L_{\mu} : \mathcal{D}(\Delta_z) \to L^2(J_- \cup J_0 \cup J_+)$
defined on the approximation $G_+$ has zero eigenvalue in the limit $\mu \to \infty$,
which becomes an exponentially small eigenvalue for large values of $\mu$.
Since no spatial symmetry can be used for the correction term $\psi_+$ to $G_+$ on $J_+$
because of the Kirchhoff boundary conditions in $J_+$,
it becomes very hard to control the projection of $\Psi_+$ to the
subspace of $\mathcal{D}(\Delta_z)$ related to the smallest eigenvalue of $L_{\mu}$.

To avoid the aforementioned difficulty and to prove persistence of the approximation (\ref{approx-1-ring}),
we develop here an alternative analytical technique. We solve the existence problem on $J_+$
in terms of Jacobi elliptic functions with an unknown parameter and then transform
the existence problem on $J_- \cup J_0$ with the unknown parameter to the fixed-point
problem. After a unique solution is obtained, we define a unique value
of the parameter used in the Jacobi elliptic functions.
The following lemma summarizes the corresponding result.

\begin{lemma}
\label{lemma-soliton-ring}
There exist $\mu_0 > 0$ sufficiently large and a positive $\mu$-independent constant $C$
such that the stationary NLS equation (\ref{statNLS-limit-graph})
for $\mu \in (\mu_0,\infty)$ admits a unique positive asymmetric standing wave
$\Psi$ given by
\begin{equation}
\label{dnoidal}
\Psi_+(z) = \frac{1}{\sqrt{2-k^2}} {\rm dn}\left(\frac{z- L \mu - \pi \mu}{\sqrt{2-k^2}};k\right), \quad z \in J_+,
\end{equation}
and satisfying the estimate
\begin{equation}
\label{dnoidal-estimates}
\| \Psi_0 \|_{H^2(J_0)} + \|\Psi_- \|_{H^2(J_-)} \leq C e^{-\pi \mu},
\end{equation}
where ${\rm dn}(\xi;k)$ is the Jacobi elliptic function defined for the elliptic modulus parameter
$k \in (0,1)$. The unique value of $k$ satisfies the asymptotic expansion
\begin{equation}
\label{dnoidal-expansion}
\sqrt{1 - k^2} = \frac{4}{\sqrt{3}} e^{-\pi \mu} \left[ 1 + \mathcal{O}(\mu e^{-2 \pi \mu},e^{- 4 L \mu}) \right]
\quad \mbox{\rm as} \quad \mu \to \infty.
\end{equation}
\end{lemma}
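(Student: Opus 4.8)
The plan is to avoid linearizing on the ring $J_+$ altogether---this is precisely where the translational near-zero mode of $L_\mu$ obstructs the naive fixed point used in Lemma \ref{lemma-soliton-line}---and instead to carry an \emph{exact} nonlinear profile on $J_+$ with one free parameter $k$, reducing the whole construction to a scalar root-finding problem for $k$. \textbf{Step 1 (exact profile on $J_+$).} I would first record that, for every $k \in (0,1)$, the dnoidal profile (\ref{dnoidal}) solves $-\Psi_+'' + \Psi_+ - 2\Psi_+^3 = 0$ on $J_+$; this follows from the identity $w'' = (2-k^2) w - 2 w^3$ for $w = {\rm dn}(\cdot;k)$, which forces both the amplitude and the inverse width to equal $(2-k^2)^{-1/2}$. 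Since ${\rm dn}$ is even and the profile is centred at the midpoint $(L+\pi)\mu$ of $J_+$, the closure condition $\Psi_+((L+2\pi)\mu) = \Psi_+(L\mu)$ in (\ref{bc-2}) holds automatically, while $\Psi_+'((L+2\pi)\mu) = -\Psi_+'(L\mu)$, so the derivative Kirchhoff condition collapses to $\Psi_0'(L\mu) = 2 \Psi_+'(L\mu)$. Thus $k$ is the only remaining unknown, and the profile supplies the junction value $\Psi_+(L\mu) = (2-k^2)^{-1/2} {\rm dn}(\xi_*;k)$ and slope $\Psi_+'(L\mu) = k^2 (2-k^2)^{-1} {\rm sn}(\xi_*) {\rm cn}(\xi_*)$, where $\xi_* := \pi\mu/\sqrt{2-k^2}$, as explicit functions of $k$.

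\textbf{Step 2 (fixed point on $J_- \cup J_0$).} For $k$ close to $1$ I would solve for $(\Psi_0,\Psi_-)$ on $J_0 \cup J_-$ the boundary-value problem given by the value-matching $\Psi_0(L\mu) = \Psi_+(L\mu)$ at the right junction together with the full Kirchhoff conditions (\ref{bc-1}) at the left junction; these four scalar conditions match the four constants of the two second-order ODEs, so the problem is well posed. The decisive difference from Lemma \ref{lemma-soliton-line} is that here the solution is expected to be exponentially small, so the relevant operator is $-\Delta_z + 1$ perturbed by an $\mathcal{O}(e^{-\pi\mu})$ potential; because $-\Delta_z + 1$ is positive definite under the homogeneous Kirchhoff conditions it carries no small eigenvalue and is boundedly invertible uniformly in $\mu$. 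Recasting the problem as $\psi = (-\Delta_z+1)^{-1}[\,\text{data} + N(\psi)\,]$ with cubic $N$ and using the Banach-algebra property of $H^2$, a contraction argument then yields, for each such $k$, a unique small solution obeying $\|\Psi_0\|_{H^2(J_0)} + \|\Psi_-\|_{H^2(J_-)} \le C e^{-\pi\mu}$, the size being inherited from $\Psi_+(L\mu) = \mathcal{O}(e^{-\pi\mu})$; positivity of the tail follows since it is governed by the decaying exponential with positive coefficient.

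\textbf{Step 3 (scalar reduction and the value of $k$).} The one Kirchhoff relation not yet enforced---the right-junction slope condition $\Psi_0'(L\mu) = 2\Psi_+'(L\mu)$---becomes a single transcendental equation $F(k)=0$ once $\Psi_0$ is the solution of Step 2. To leading order the tail on $J_0$ is the purely decaying mode $\Psi_0(z) \approx \Psi_0(L\mu)\, e^{z-L\mu}$, so $\Psi_0'(L\mu) \approx \Psi_0(L\mu) = \Psi_+(L\mu)$ and $F(k)=0$ reduces to $2\Psi_+'(L\mu) = \Psi_+(L\mu)$, i.e. $2 k^2 (2-k^2)^{-1/2} {\rm sn}(\xi_*) {\rm cn}(\xi_*) = {\rm dn}(\xi_*)$. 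Writing $\xi_* = K(k) - \eta$ and $\kappa := \sqrt{1-k^2}$, the quarter-period shifts ${\rm sn}(K-\eta) = {\rm cn}(\eta)/{\rm dn}(\eta)$, ${\rm cn}(K-\eta) = \kappa\,{\rm sn}(\eta)/{\rm dn}(\eta)$, ${\rm dn}(K-\eta) = \kappa/{\rm dn}(\eta)$ collapse this, as $k \to 1$, to $2\tanh\eta = 1$, hence $\eta \to \tfrac12 \ln 3$. Combined with $K(k) = \ln(4/\kappa) + o(1)$ and $\xi_* = \pi\mu + \mathcal{O}(\mu\kappa^2)$ this gives $\pi\mu = \ln(4/\kappa) - \tfrac12\ln 3 + \cdots$, equivalently the asserted expansion (\ref{dnoidal-expansion}); the $\mathcal{O}(\mu e^{-2\pi\mu})$ correction arises from the $\mathcal{O}(\kappa^2)$ shift of $\xi_*$ and the next order of the $k\to1$ expansions, while the $\mathcal{O}(e^{-4L\mu})$ correction is the reflected (growing) mode on $J_0$ returning after a round trip of length $4L\mu$.

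I expect the genuine obstacle to lie in Step 3. The matching point $\xi_* \approx K(k)$ sits exactly in the transition region $\kappa^2 e^{2\xi_*} = \mathcal{O}(1)$, where neither the solitonic nor the small-amplitude expansion of the Jacobi functions is valid, so these asymptotics must be controlled uniformly there. One must then upgrade the formal balance to a rigorous statement---showing that $F$ is $C^1$ in $k$ with $F'(k)\neq 0$ at the root (transversality), so that the implicit function theorem yields a truly unique $k=k(\mu)$ with the stated expansion---and verify that the reflected-mode contribution is indeed only $\mathcal{O}(e^{-4L\mu})$ and hence cannot disturb the leading balance.
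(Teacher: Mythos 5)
Your proposal is correct and its architecture coincides with the paper's proof: an exact one-parameter dnoidal profile on $J_+$, whose evenness about the midpoint reduces the Kirchhoff conditions there to the single slope relation $\Psi_0'(L\mu)=2\Psi_+'(L\mu)$; a contraction argument on $J_-\cup J_0$ using the four remaining boundary conditions and the uniform invertibility of $-\Delta_z+1$ (the paper implements this by first solving the linear problem $-G''+G=0$ explicitly and then contracting for the correction, which is $\mathcal{O}(e^{-3\pi\mu})$); and a final scalar equation in $k$ from the deferred slope condition, with positivity of the tail read off from the explicit decaying mode. The only real divergence is in how the scalar equation is analyzed: the paper expands $p(k,\mu)=\Psi_+(L\mu)$ and $q(k,\mu)=\Psi_+'(L\mu)$ in $k$ near $k=1$, which requires the derivative formulas and uniform bounds for $\partial_k{\rm sn}$, $\partial_k{\rm cn}$, $\partial_k{\rm dn}$ of Proposition \ref{prop-elliptic} (proved in Appendix A); these simultaneously give monotonicity of $p$ (decreasing) and $q$ (increasing) on $(k_*(\mu),1)$, hence the transversality needed for uniqueness via the implicit function theorem. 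Your substitution $\xi_*=K(k)-\eta$ with the quarter-period identities is a cleaner way to get the leading balance---it collapses the equation to $2\tanh\eta=1$, i.e. $\eta=\frac{1}{2}\ln 3$, and correctly reproduces the constant $4/\sqrt{3}$ in (\ref{dnoidal-expansion}) together with the error terms $\mathcal{O}(\mu e^{-2\pi\mu})$ (from the $\mathcal{O}(1-k^2)$ shift of $\xi_*$) and $\mathcal{O}(e^{-4L\mu})$ (from the reflected mode on $J_0$, matching the paper's expansion of $G_0'(L\mu)$). But, as you yourself anticipate, upgrading this to a rigorous, unique root still demands uniform control of the Jacobi-function asymptotics in the window $k\in(k_*(\mu),1)$ and a nonvanishing $k$-derivative of the matching function, which is precisely what the paper's Proposition \ref{prop-elliptic} supplies; so the two routes differ only in the formal computation and must converge on the same technical lemma at the end.
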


\begin{remark}
It is well-known (see, e.g., Lemma 2 in \cite{Pelin} for similar estimates) that
if $\mu \to \infty$ and $k \to 1$ according to the asymptotic expansion (\ref{dnoidal-expansion}),
then the dnoidal wave (\ref{dnoidal}) is approximated
by the solitary wave (\ref{approx-1-ring}) on $J_+$ such that
\begin{equation}
\label{estimate-2}
\| \Psi_+ - G_+ \|_{L^{\infty}(J_+)} \leq C e^{-\pi \mu},
\end{equation}
where the positive constant $C$ is $\mu$-independent. Therefore, for sufficiently large $\mu$,
we have justified the bound
$$
\| \Psi - \Psi_{\infty}(\cdot - L \mu - \pi \mu) \|_{L^{\infty}(G_- \cup G_0 \cup G_+)} \leq C e^{-\pi \mu},
$$
for the asymmetric standing wave of Lemma \ref{lemma-soliton-ring}.
\end{remark}

\begin{proof}
The proof of Lemma \ref{lemma-soliton-ring} consists of three main steps.\\

{\bf Step 1: Dnoidal wave solution.}
The second-order differential equation (\ref{statNLS-limit-graph}) is integrable
and all solutions can be studied on the phase plane $(\Psi,\Psi')$. The trajectories on the phase plane
correspond to the level set of the first-order invariant
\begin{equation}
\label{invariant}
I := \left( \frac{d \Psi}{d z} \right)^2 - \Psi^2 + \Psi^4 = {\rm const}.
\end{equation}
The level set of $I$ is shown on Figure \ref{fig3}.
There are two families of periodic solutions. One family is sign-indefinite and the corresponding
trajectories on the phase plane $(\Psi,\Psi')$ surround
the three equilibrium points. This family is expressed in terms
of the Jacobi cnoidal function. The other family of periodic solutions is
strictly positive and the corresponding trajectories on the phase plane $(\Psi,\Psi')$
are located inside the positive homoclinic orbit. This other family
is expressed in terms of the Jacobi dnoidal function.

\begin{figure}[htp]
\includegraphics[width=12cm]{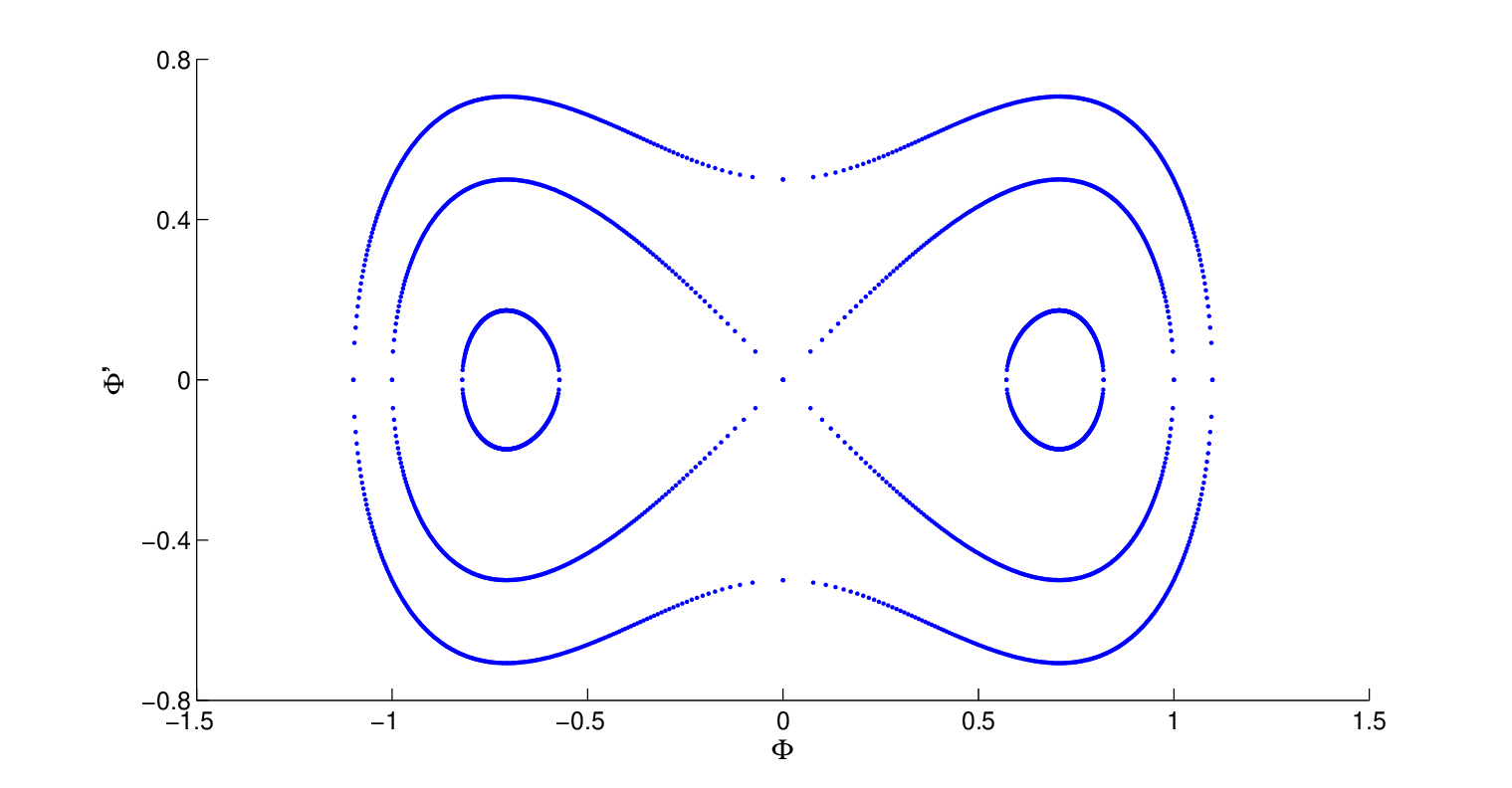}
\caption{Level set (\ref{invariant}) on the phase plane $(\Psi,\Psi')$.}
\label{fig3}
\end{figure}

Because of the Kirchhoff boundary condition $\Psi_+(L \mu) = \Psi_+(L\mu + 2 \pi \mu)$,
we consider a trajectory on the phase plane $(\Psi,\Psi')$, which is symmetric
about the middle point in $J_+$ at $z = (L+\pi) \mu$.   Since the trajectory
is supposed to converge to $G_+$ given by (\ref{approx-1-ring}) as $\mu \to \infty$,
we select an incomplete orbit with $\Psi_+'(L \mu) = -\Psi_+'(L\mu + 2 \pi \mu)$. For the trajectory
inside the homoclinic orbit, the corresponding solution of the first-order invariant (\ref{invariant})
is given by the exact expression (\ref{dnoidal}). We now define
\begin{eqnarray}
\label{def-p}
p(k,\mu) & := & \Psi_+(L \mu) =  \frac{1}{\sqrt{2-k^2}} {\rm dn}\left(\frac{\pi \mu}{\sqrt{2-k^2}};k\right), \\
\label{def-q}
q(k,\mu) & := & \Psi_+'(L \mu) = \frac{k^2}{2-k^2} {\rm sn}\left(\frac{\pi \mu}{\sqrt{2-k^2}};k\right) {\rm cn}\left(\frac{\pi \mu}{\sqrt{2-k^2}};k\right)
\end{eqnarray}
and consider the range of the values of $k$ for which $q(k,\mu) \geq 0$.
Since ${\rm cn}(\xi;k)$ vanishes at $\xi = K(k)$, where $K(k)$ is the complete elliptic integrals of the first kind,
we can define $k_*(\mu)$ from the unique root of the equation
\begin{equation}
\label{root-k-star}
\pi \mu = \sqrt{2 - k_*^2} K(k_*).
\end{equation}
Therefore, $q(k_*(\mu),\mu) = 0$. On the other hand, we have
\begin{equation}
\label{limiting-1}
q(k,\mu) \to q_*(\mu) := \tanh(\pi \mu) {\rm sech}(\pi \mu) = 2 e^{-\pi \mu} + \mathcal{O}(e^{-3\pi \mu}) \quad
\mbox{\rm as} \quad k \to 1.
\end{equation}

Since (see 8.113 in \cite{Grad})
\begin{equation}
\label{expansion-K}
K(k) = \log\left(\frac{4}{\sqrt{1-k^2}}\right) + \mathcal{O}\left((1-k^2)|\log(1-k^2)|\right)\quad
\mbox{\rm as} \quad k \to 1,
\end{equation}
the root $k_*(\mu)$ of the transcendental equation (\ref{root-k-star}) satisfies the asymptotic expansion
$$
\sqrt{1 - k_*^2} = 4 e^{-\pi \mu} + \mathcal{O}(e^{-3 \pi \mu}) \quad \mbox{\rm as} \quad \mu \to \infty.
$$
Thus, the entire interval $(k_*(\mu),1)$ is exponentially small in terms of large $\mu$.
With this asymptotic in mind, we compute the limiting values of the function $p(k,\mu)$ for $k \in (k_*(\mu),1)$.
At one end, we obtain
\begin{equation}
\label{limiting-2}
p(k_*(\mu),\mu) = \frac{\sqrt{1 - k_*^2}}{\sqrt{2 - k_*^2}} = 4 e^{-\pi \mu} +  \mathcal{O}(e^{-3 \pi \mu}) \quad \mbox{\rm as} \quad \mu \to \infty,
\end{equation}
whereas at the other end, we obtain
\begin{equation}
\label{limiting-3}
p(k,\mu) \to p_*(\mu) := {\rm sech}(\pi \mu) = 2 e^{-\pi \mu} +  \mathcal{O}(e^{-3 \pi \mu}) \quad \mbox{\rm as} \quad k \to 1.
\end{equation}

Figure \ref{fig5} shows the dependencies of $p$ and $q$ versus $k$ in $(k_*(\mu),1)$ for a particular value $\mu = 2$.
The graph illustrates that $q(k,\mu)$ is a monotonically increasing function with respect to $k$ in $k \in (k_*(\mu),1)$ from
$0$ to $q_*(\mu)$, whereas $p(k,\mu)$ is a monotonically decreasing function from $p(k_*(\mu),\mu)$ to $p_*(\mu)$.\\

\begin{figure}[htp]
\includegraphics[width=12cm]{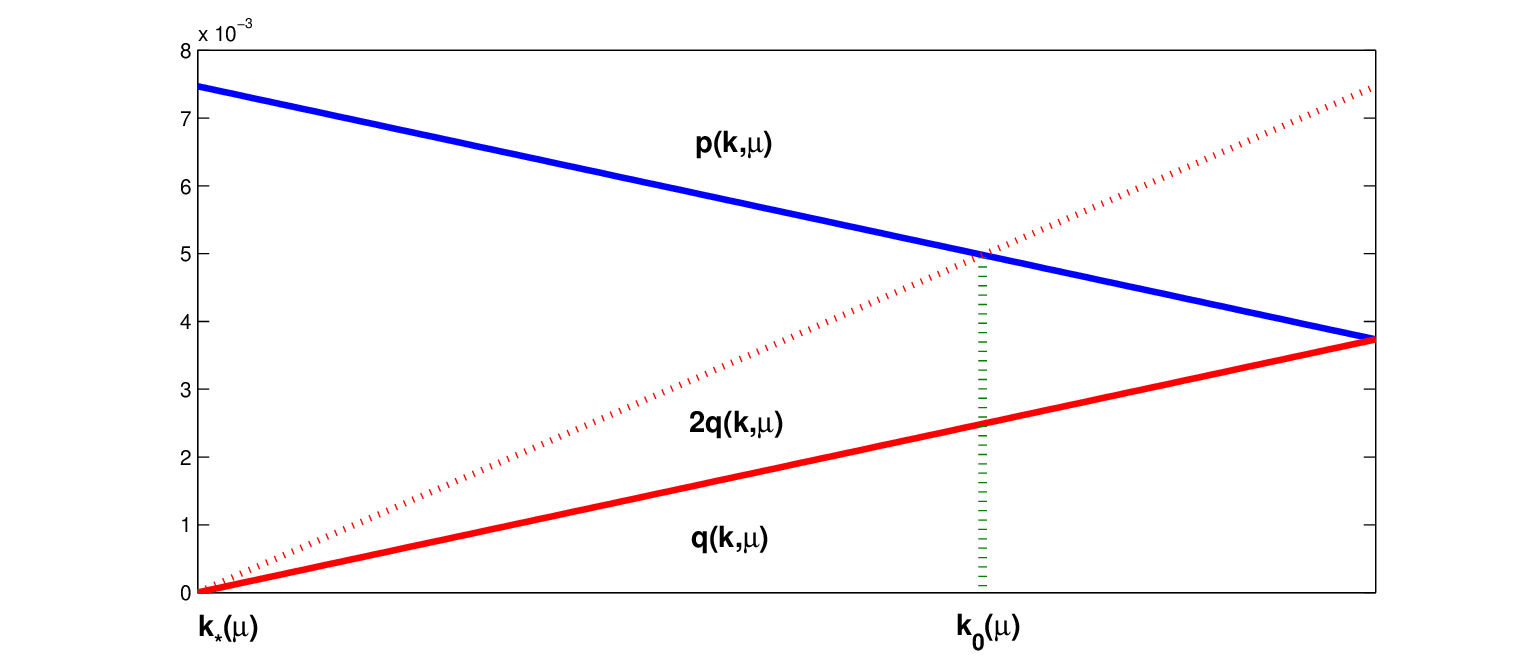}
\caption{The graphs of $p$ (solid blue), $q$ (solid red), and $2 q$ (dashed red) versus $k$ in $(k_*(\mu),1)$
for $\mu = 2$. The dotted line shows a graphical solution of the equation $p = 2q$ at $k_0(\mu)$.}
\label{fig5}
\end{figure}

{\bf Step 2: Fixed-point arguments.} By substituting the explicit solution (\ref{dnoidal}) to the stationary
NLS equation (\ref{statNLS-limit-graph}), we close the system of two second-order differential equations
\begin{eqnarray}
\left\{ \begin{array}{l}
-\Psi_0'' + \Psi_0 - 2 \Psi_0^2 = 0, \quad z \in J_0, \\
-\Psi_-'' + \Psi_- - 2 \Psi_-^2 = 0, \quad z \in J_-, \end{array} \right. \label{system-ode}
\end{eqnarray}
with five boundary conditions
\begin{eqnarray}
\label{system-ode-bc}
\left\{ \begin{array}{l}
\Psi_-(-L\mu) = \Psi_-(-L\mu - 2 \pi \mu) = \Psi_0(-L\mu), \\
\Psi_-'(-L\mu) - \Psi_-'(-L\mu - 2 \pi \mu) = \Psi_0'(-L \mu), \\
\Psi_0 (L \mu) = p(k,\mu), \quad \Psi_0' (L \mu) = 2 q(k,\mu),\end{array} \right.
\end{eqnarray}
where $p(k,\mu)$ and $q(k,\mu)$ are explicit functions of an unknown parameter $k$ defined in
the interval $(k_*(\mu),1)$.
Thus, although the five boundary conditions over-determine the system of differential equations (\ref{system-ode}),
the parameter $k$ can be used to complete the set of unknowns.
For this step, we will neglect the last boundary condition in (\ref{system-ode-bc})
given by $\Psi_0' (L \mu) = 2 q(k,\mu)$.

Let us define the approximation of the solution to the system (\ref{system-ode}),
denoted by $(G_-,G_0)$ by the solution of the linear system
\begin{eqnarray}
\left\{ \begin{array}{l}
- G_0'' + G_0 = 0, \quad z \in J_0, \\
-G_-'' + G_- = 0, \quad z \in J_-, \end{array} \right. \label{system-ode-linear}
\end{eqnarray}
subject to the four boundary conditions
\begin{eqnarray}
\left\{ \begin{array}{l} G_-(-L\mu) = G_-(-L\mu - 2 \pi \mu) = G_0(-L\mu), \\
G_-(-L\mu) - G_-'(-L\mu - 2 \pi \mu) = G_0'(-L \mu), \\
G_0(L \mu) = p(k,\mu). \end{array} \right.
\end{eqnarray}
The boundary-value problem has the unique solution
\begin{align}
\label{approximation-G-minus}
G_-(z) & =  p(k,\mu) \frac{\cosh(z + L \mu + \pi \mu)}{\cosh(\pi \mu) \cosh(2L \mu) + 2 \sinh(\pi \mu) \sinh(2 L \mu)}, \\
\label{approximation-G-0}
G_0(z) & =  p(k,\mu) \frac{\cosh(\pi \mu) \cosh(z + L \mu) + 2 \sinh(\pi \mu) \sinh(z + L \mu)}{
\cosh(\pi \mu) \cosh(2L \mu) + 2 \sinh(\pi \mu) \sinh(2 L \mu)}.
\end{align}
It follows from the explicit solution (\ref{approximation-G-minus}) and (\ref{approximation-G-0}) that
for $\mu > 0$ sufficiently large, we have
\begin{eqnarray}
\label{estimate-G}
\| G_- \|_{H^2(J_-)} \leq C p(k,\mu) e^{-2L \mu}, \quad \| G_0 \|_{H^2(J_0)} \leq C p(k,\mu),
\end{eqnarray}
where $p(k,\mu) = \mathcal{O}(e^{-\pi \mu})$ as $\mu \to \infty$ for every $k \in (k_*(\mu),1)$
and $C$ is a positive $\mu$-independent constant.

\begin{remark}
Compared with the estimates (\ref{approx-4}) and (\ref{approx-5}), where we are losing $\mu^{1/2}$
between the $L^{\infty}$ and $L^2$ bounds, the estimate (\ref{estimate-G}) works equally well
in $L^{\infty}$ and $L^2$ thanks to the integration of the explicit solutions (\ref{approximation-G-minus})
and (\ref{approximation-G-0}).
\end{remark}

Applying the decomposition
$$
\Psi_-(z) = G_-(z) + \psi_-(z), \quad \Psi_0(z) = G_0(z) + \psi_0(z),
$$
we obtain the following persistence problem
\begin{eqnarray}
\left\{ \begin{array}{l}
- \psi_0'' + \psi_0 = 2 (G_0 + \psi_0)^3, \quad z \in J_0, \\
- \psi''_- + \psi_- = 2 (G_- + \psi_-)^3, \quad z \in J_-, \end{array} \right. \label{system-ode-persistence}
\end{eqnarray}
subject to the four homogeneous boundary conditions
\begin{eqnarray}
\left\{ \begin{array}{l}
\psi_-(-L\mu) = \psi_-(-L\mu - 2 \pi \mu) = \psi_0(-L\mu), \\
\psi_-'(-L\mu) - \psi_-'(-L\mu - 2 \pi \mu) = \psi_0'(-L \mu), \\
\psi_0(L \mu) = 0.\end{array} \right. \label{homo-boun-cond}
\end{eqnarray}
Since $1 - \Delta_z$ is invertible on $L^2(J_- \cup J_0)$ with a bounded inverse in $H^2(J_- \cup J_0)$
due to the four symmetric homogeneous boundary conditions (\ref{homo-boun-cond})
and the inhomogeneous term is estimated by using the bound (\ref{estimate-G}),
a contraction mapping method applies to the fixed-point problem (\ref{system-ode-persistence}).
As a result, there exists a small unique solution for $\psi_-$ and $\psi_0$
of the persistence problem  (\ref{system-ode-persistence}) satisfying the estimate
\begin{eqnarray}
\label{estimate-psi}
\| \psi_- \|_{H^2(J_-)} + \| \psi_0 \|_{H^2(J_0)} \leq C p(k,\mu)^3,
\end{eqnarray}
where $p(k,\mu) = \mathcal{O}(e^{-\pi \mu})$ as $\mu \to \infty$ for every $k \in (k_*(\mu),1)$
and the positive constant $C$ is $\mu$-independent.\\

{\bf Step 3: Unique value for the parameter $k$ in the interval $(k_*(\mu),1)$.}
It remains to satisfy the fifth boundary condition in (\ref{system-ode-bc}), which can be written
in the following form
\begin{equation}
\label{root-finding}
2 q(k,\mu) = G_0'(L \mu) + \psi_0'(L \mu) = p(k,\mu) +  \mathcal{O}(e^{-\pi \mu - 4 L \mu},e^{-3 \pi \mu}) \quad \mbox{\rm as} \quad \mu \to \infty,
\end{equation}
where we have used the exact result (\ref{approximation-G-0}) yielding
$$
G_0'(L \mu) = p(k,\mu) \left( 1 + e^{-2 L \mu}
\frac{2 \sinh(\pi \mu) - \cosh(\pi \mu)}{\cosh(\pi \mu) \cosh(2L \mu) + 2 \sinh(\pi \mu) \sinh(2 L \mu)} \right),
$$
as well as the estimate (\ref{estimate-psi})
with the account that $p(k,\mu) = \mathcal{O}(e^{-\pi \mu})$ as $\mu \to \infty$ for every $k \in (k_*(\mu),1)$.

Figure \ref{fig5} illustrates graphically that $q(k,\mu)$ is monotonically increasing with respect to $k$ in the interval $(k_*(\mu),1)$
from $0$ to $q_*(\mu) = 2 e^{-\pi \mu} +  \mathcal{O}(e^{-3 \pi \mu})$,
whereas $p(k,\mu)$ is monotonically decreasing with respect to $k$ in the interval $(k_*(\mu),1)$
from $p(k_*(\mu),\mu) = 4 e^{-\pi \mu} +  \mathcal{O}(e^{-3 \pi \mu})$
to $p_*(\mu) = 2 e^{-\pi \mu} +  \mathcal{O}(e^{-3 \pi \mu})$. Therefore,
there exists exactly one solution $k_0(\mu) \in (k_*(\mu),1)$ of the equation (\ref{root-finding}).

In order to prove monotonicity of $p$ and $q$ in $k$, as well as
the asymptotic expansion (\ref{dnoidal-expansion}), we study the rate of change
of the functions $p$ and $q$ with respect to $k$. From the explicit expression (\ref{def-p}),
we obtain
\begin{align}
\label{def-der-p}
\partial_k p (k, \mu) & =  \frac{k}{\sqrt{(2-k^2)^3}}  {\rm dn}\left(\frac{\pi \mu}{\sqrt{2-k^2}};k\right)  \\
\nonumber
& \phantom{t}  -  \frac{ \pi \mu k^3}{ (2-k^2)^2}   {\rm sn}\left(\frac{\pi \mu}{\sqrt{2-k^2}};k\right) {\rm cn}\left(\frac{\pi \mu}{\sqrt{2-k^2}};k\right) \\
& \phantom{t}  + \frac{1}{\sqrt{2-k^2}} \partial_k {\rm dn}\left(\xi;k\right)  \biggr|_{\xi = \frac{\pi \mu}{\sqrt{2-k^2}}}.
\nonumber
\end{align}
For every $k \in (k_*(\mu),1)$ and sufficiently large $\mu$, the first term in (\ref{def-der-p}) is exponentially small of
the order of $\mathcal{O}(e^{-\pi \mu})$, whereas the second term is larger of the order of $\mathcal{O}(\mu e^{-\pi \mu})$.
Nevertheless, we show that the last term in (\ref{def-der-p}) is dominant as it is exponentially large as $\mu \to \infty$.
To show this, we use the following result proved in Appendix A.

\begin{proposition}
\label{prop-elliptic}
For every $\xi \in \mathbb{R}$, it is true that
\begin{align}
\label{sn-der}
& {\rm sn}(\xi;1) =  \tanh(\xi), \quad \partial_k {\rm sn}(\xi;1) = -\frac{1}{2} \left[ \sinh(\xi) \cosh(\xi) - \xi \right] {\rm sech}^2(\xi), \\
\label{cn-der}
& {\rm cn}(\xi;1)  =  {\rm sech}(\xi), \quad \partial_k {\rm cn}(\xi;1) = \frac{1}{2} \left[ \sinh(\xi) \cosh(\xi) - \xi \right] \tanh(\xi) {\rm sech}(\xi), \\
\label{dn-der}
& {\rm dn}(\xi;1)  =  {\rm sech}(\xi), \quad \partial_k {\rm dn}(\xi;1) = -\frac{1}{2} \left[ \sinh(\xi) \cosh(\xi) + \xi \right] \tanh(\xi) {\rm sech}(\xi).
\end{align}
Moveover, if $\mu$ is sufficiently large, then for every $\xi \in (0,\pi \mu)$ and every $k \in (k_*(\mu),1)$,
there is a positive $\mu$-independent constant $C$ such that
\begin{equation}
\label{bound-second-derivative}
|\partial_k {\rm sn}(\xi;k) - \partial_k {\rm sn}(\xi;1)| +
| \partial_k {\rm cn}(\xi;k) - \partial_k {\rm cn}(\xi;1)| +
|\partial_k {\rm dn}(\xi;k) - \partial_k {\rm dn}(\xi;1)| \leq C \mu e^{-\pi \mu}.
\end{equation}
\end{proposition}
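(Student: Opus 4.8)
The plan is to base everything on the first-order system that the Jacobi elliptic functions satisfy in the argument $\xi$, namely $\partial_\xi {\rm sn} = {\rm cn}\,{\rm dn}$, $\partial_\xi {\rm cn} = -{\rm sn}\,{\rm dn}$, $\partial_\xi {\rm dn} = -k^2\,{\rm sn}\,{\rm cn}$, together with the $k$-independent initial data ${\rm sn}(0;k)=0$, ${\rm cn}(0;k)={\rm dn}(0;k)=1$. The limits ${\rm sn}(\xi;1)=\tanh\xi$ and ${\rm cn}(\xi;1)={\rm dn}(\xi;1)={\rm sech}\,\xi$ are the classical $k\to1$ degeneration of this system. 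For the modulus derivatives I would differentiate the three equations in $k$, producing a linear inhomogeneous system for the triple $(\partial_k {\rm sn}, \partial_k {\rm cn}, \partial_k {\rm dn})$ whose coefficients are built from the elliptic functions themselves; differentiating the initial data shows this triple vanishes at $\xi=0$. Setting $k=1$ and inserting the hyperbolic limits reduces it to a linear system with $\tanh$ and ${\rm sech}$ coefficients and zero initial data, and I would verify directly that the closed forms on the right-hand sides of (\ref{sn-der})--(\ref{dn-der}) solve it --- a short computation using $\partial_\xi(\sinh\xi\cosh\xi-\xi)=2\sinh^2\xi$ and $\partial_\xi(\sinh\xi\cosh\xi+\xi)=2\cosh^2\xi$ --- after which uniqueness of solutions of a linear ODE with prescribed initial data gives the explicit formulas of the first part.

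For the uniform bound (\ref{bound-second-derivative}) the decisive input is that the admissible interval is exponentially thin. From (\ref{root-k-star}) and the expansion (\ref{expansion-K}) one has $\sqrt{1-k_*^2}=4e^{-\pi\mu}+\mathcal{O}(e^{-3\pi\mu})$, hence $1-k\le 1-k_*=\mathcal{O}(e^{-2\pi\mu})$ uniformly for $k\in(k_*(\mu),1)$. Since for each fixed $\xi$ the elliptic functions are smooth in $k$ up to $k=1$, the mean value theorem in $k$ reduces the claim to bounding the second modulus derivatives $\partial_k^2 {\rm sn}, \partial_k^2 {\rm cn}, \partial_k^2 {\rm dn}$ by $C\mu e^{\pi\mu}$ on $\xi\in(0,\pi\mu)$, $k\in(k_*(\mu),1)$; multiplying by $|k-1|=\mathcal{O}(e^{-2\pi\mu})$ then produces the claimed $C\mu e^{-\pi\mu}$ and, summing over the three functions, the full estimate.

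To bound those second derivatives I would differentiate the system once more in $k$, obtaining a linear inhomogeneous system for $(\partial_k^2 {\rm sn}, \partial_k^2 {\rm cn}, \partial_k^2 {\rm dn})$ driven by products of the functions (which are $\mathcal{O}(1)$ and, by the first part, close to their hyperbolic limits) and of the first modulus derivatives (which grow at most like $e^\xi\le e^{\pi\mu}$), together with the elementary inhomogeneities from $\partial_k(k^2)$ and $\partial_k^2(k^2)$, and close it with a Gronwall estimate over $\xi\in(0,\pi\mu)$; the extra factor of $\mu$ appears from integrating the terms growing linearly in $\xi$ over a window of length $\mathcal{O}(\mu)$. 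The main obstacle is precisely this growth control: because the period of the elliptic functions diverges as $k\to1$, a naive Gronwall argument returns a double exponential, and the point is to exploit the near-hyperbolic structure of the coefficients (quantified through the first part) so that the homogeneous dynamics grows only like $e^{\pm\xi}$, keeping each second derivative at the single-exponential size $C\mu e^{\pi\mu}$. Balancing this single exponential against the exponentially small width $\mathcal{O}(e^{-2\pi\mu})$ of the interval $(k_*(\mu),1)$ is what makes the final bound come out to $C\mu e^{-\pi\mu}$.
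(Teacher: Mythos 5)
Your treatment of the explicit limits \eqref{sn-der}--\eqref{dn-der} is correct and takes a genuinely different route from the paper: you differentiate the first-order system $\partial_\xi {\rm sn}={\rm cn}\,{\rm dn}$, $\partial_\xi {\rm cn}=-{\rm sn}\,{\rm dn}$, $\partial_\xi {\rm dn}=-k^2{\rm sn}\,{\rm cn}$ in $k$ and identify the limits by uniqueness for the resulting linear system with zero data at $\xi=0$, whereas the paper differentiates the integral representation $\xi=\int_{{\rm dn}(\xi;k)}^1 dt/\bigl(\sqrt{1-t^2}\sqrt{t^2-1+k^2}\bigr)$ under the integral sign. Your route is fine, modulo the standard remark that ${\rm sn},{\rm cn},{\rm dn}$ are smooth in $k$ up to $k=1$ for fixed $\xi$, so that the one-sided derivatives at $k=1$ do satisfy the limiting system.

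The second half of your argument has a genuine gap, and for ${\rm sn}$ it cannot be repaired. Your mean-value reduction needs the uniform bound $|\partial_k^2 f(\xi;k)|\le C\mu e^{\pi\mu}$ for all three functions, but this is false for ${\rm sn}$: the equation for $\partial_k^2{\rm sn}$ in the twice-differentiated system carries the source $2\,\partial_k{\rm cn}\,\partial_k{\rm dn}$, a product of \emph{two} first derivatives, each of size $e^{\xi}/4$ (your bookkeeping only admits products of a bounded function with one first derivative, so this term is missed). Its effect does not cancel; for instance, differentiating the identity $k^2{\rm sn}^2=1-{\rm dn}^2$ twice in $k$ and using \eqref{dn-der} together with $\partial_k^2{\rm dn}(\xi;1)=\mathcal{O}(\xi e^{\xi})$ gives $\partial_k^2{\rm sn}(\xi;1)=-\bigl(\partial_k{\rm dn}(\xi;1)\bigr)^2+\mathcal{O}(\xi)=-\tfrac{1}{16}e^{2\xi}\,(1+o(1))$. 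Hence at $\xi$ near $\pi\mu$ the mean-value bound yields only $|k-1|\,e^{2\xi}=\mathcal{O}(1)$, not $\mathcal{O}(\mu e^{-\pi\mu})$, and no refinement of the Gronwall step can do better, because the ${\rm sn}$ part of \eqref{bound-second-derivative} is itself false on the stated region: since ${\rm sn}(K(k);k)=1$ and ${\rm cn}(K(k);k)=0$ for every $k$, one has $\partial_k{\rm sn}(K(k);k)=0$ exactly, while $\partial_k{\rm sn}(\xi;1)\to-\tfrac12$ as $\xi\to\infty$; by \eqref{root-k-star} the point $\xi=K(k)$ lies in $(0,\pi\mu)$ for $k$ slightly above $k_*(\mu)$, and there the ${\rm sn}$ difference equals about $\tfrac12$. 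For comparison, the paper proves the bound only for ${\rm dn}$ — where the quadratic sources are merely $\mathcal{O}(e^{\xi})$, the second derivative is $\mathcal{O}(\xi e^{\xi})$, and the whole Taylor series in $(k-1)$ is summed with inductively established growth rates $\mathcal{O}(\cosh((2j+1)\xi))$, $\mathcal{O}(\xi\sinh((2j+1)\xi))$ — and its assertion that the other functions are ``similar'' is precisely where this issue hides (harmlessly: in Lemma \ref{lemma-soliton-ring} the term $\partial_k{\rm sn}$ is always multiplied by ${\rm cn}=\mathcal{O}(e^{-\pi\mu})$, so an $\mathcal{O}(1)$ bound suffices there). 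With the coupled-system bookkeeping corrected, your method would establish the ${\rm cn}$ and ${\rm dn}$ parts of \eqref{bound-second-derivative} together with an $\mathcal{O}(1)$ bound for the ${\rm sn}$ part, which is the most that is true.
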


From (\ref{dn-der}) and (\ref{bound-second-derivative}),
we obtain the dominant contribution of (\ref{def-der-p}) for every $k \in (k_*(\mu),1)$:
\begin{equation}
\label{der-p-dominant}
\partial_k p (k, \mu)  = -\frac{1}{4} e^{\pi \mu} + \mathcal{O}(\mu e^{-\pi \mu}) \quad \mbox{\rm as} \quad \mu \to \infty.
\end{equation}
Similarly, we differentiate (\ref{def-q}) in $k$, use (\ref{sn-der}), (\ref{cn-der}), and (\ref{bound-second-derivative}),
and obtain the asymptotic expansion for every $k \in (k_*(\mu),1)$:
\begin{equation}
\label{der-q-dominant}
\partial_k q (k, \mu) = \frac{1}{4} e^{\pi \mu} + \mathcal{O}(\mu e^{-\pi \mu}) \quad \mbox{\rm as} \quad \mu \to \infty.
\end{equation}
It follows from (\ref{der-p-dominant}) and (\ref{der-q-dominant}) that $p(k,\mu)$ and $q(k,\mu)$ are monotonically
decreasing and increasing functions with respect to $k$ as $k \to 1$, in agreement with the behavior on
Figure \ref{fig5}. Furthermore, the algebraic equation (\ref{root-finding}) can be analyzed in
the asymptotic limit of large $\mu$. Indeed, multiplying (\ref{root-finding}) by $e^{-\pi \mu}$, we obtain
\begin{align}
\nonumber
& \phantom{t}  4 e^{-2\pi \mu} + \frac{1}{2} (k-1) + \mathcal{O}(e^{-4 \pi \mu},(k-1) \mu e^{-2\pi \mu})\\
& =
2 e^{-2\pi \mu} - \frac{1}{4} (k-1) + \mathcal{O}(e^{-4 \pi \mu},(k-1) \mu e^{-2\pi \mu},e^{-2\pi \mu - 4 L \mu}),
\label{root-finding-asymptotics}
\end{align}
where remainder terms are all smooth in their variables.
By the Implicit Function Theorem, we obtain the unique root of the algebraic equation (\ref{root-finding-asymptotics})
denoted by $k_0(\mu)$. The root satisfies the asymptotic expansion
$$
k_0(\mu) = 1 - \frac{8}{3} e^{-2\pi \mu} \left[ 1 + \mathcal{O}(\mu e^{-2 \pi \mu},e^{- 4 L \mu}) \right],
$$
which justifies the asymptotic expansion (\ref{dnoidal-expansion}). Furthermore, the
bound (\ref{dnoidal-estimates}) follows from estimates (\ref{estimate-G}) and (\ref{estimate-psi}).

Finally, because the perturbation term $(\psi_-,\psi_0)$ is triply exponentially small,
whereas the leading-order approximation $(G_-,G_0)$ is exponentially small and positive,
we deduce that $(\Psi_-,\Psi_0)$ is positive on $J_- \cup J_0$.
From the exact representation (\ref{dnoidal}), we also know that $\Psi_+$ is positive on $J_+$. Thus, the asymmetric
standing wave is positive on $J_- \cup J_0 \cup J_+$. The proof of the lemma is complete.
\end{proof}

The same method in the proof of Lemma \ref{lemma-soliton-ring}
can be applied to construct the symmetric solitary wave described in Lemma \ref{lemma-soliton-line}.
However, because of the Kirchhoff boundary conditions, we need to take the symmetric orbit
outside of the homoclinic orbit on Figure \ref{fig3}.
The following lemma summarizes the corresponding result.

\begin{lemma}
\label{remark-soliton-line}
There exist $\mu_0 > 0$ sufficiently large and a positive $\mu$-independent constant $C$
such that the stationary NLS equation (\ref{statNLS-limit-graph})
for $\mu \in (\mu_0,\infty)$ admits a unique positive symmetric standing wave
$\Psi$ given by
\begin{equation}
\label{cnoidal}
\Psi_0(z) = \frac{k}{\sqrt{2k^2-1}} {\rm cn}\left(\frac{z}{\sqrt{2k^2-1}};k\right), \quad z \in J_0
\end{equation}
and satisfying the estimate
\begin{equation}
\label{cnoidal-estimates}
\| \Psi_+ \|_{H^2(J_+)} + \|\Psi_- \|_{H^2(J_-)} \leq C e^{-L \mu},
\end{equation}
where ${\rm cn}(\xi;k)$ is the Jacobi elliptic function defined for the elliptic modulus parameter
$k \in (0,1)$. The unique value for $k$ satisfies the asymptotic expansion
\begin{equation}
\label{cnoidal-expansion}
\sqrt{1 - k^2} = \frac{4}{\sqrt{3}} e^{-L \mu} \left[ 1 + \mathcal{O}(\mu e^{-2 L \mu},e^{- 4 \pi \mu}) \right]
\quad \mbox{\rm as} \quad \mu \to \infty.
\end{equation}
\end{lemma}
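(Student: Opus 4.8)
The plan is to mirror the three-step construction of Lemma~\ref{lemma-soliton-ring}, interchanging the roles of the central segment and the ring. I place the exact cnoidal wave (\ref{cnoidal}) on $J_0$; since ${\rm cn}(\xi;k)$ is even, $\Psi_0$ is automatically even, and the required symmetry $\Psi_-(-z)=\Psi_+(z)$ lets me solve the whole problem on the single ring $J_+$ (the conditions at the left junction then follow from those at $z=L\mu$ by reflection, because $\Psi_0$ is even). Substituting (\ref{cnoidal}) into (\ref{statNLS-limit-graph}) fixes the internal scale as $(2k^2-1)^{-1/2}$ and the amplitude as $k(2k^2-1)^{-1/2}$, so the wave is real and positive on $J_0$ exactly when $k\in(1/\sqrt2,1)$ and its argument stays below the quarter-period $K(k)$. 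As in the ring case I introduce the boundary data $p(k,\mu):=\Psi_0(L\mu)$ and $q(k,\mu):=-\Psi_0'(L\mu)$, which are explicit in ${\rm cn},{\rm sn},{\rm dn}$ at $L\mu/\sqrt{2k^2-1}$ and positive on the relevant interval.

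Step~1 isolates that interval. Positivity of $\Psi_0$ on $J_0$ holds while $L\mu/\sqrt{2k^2-1}<K(k)$; equality defines $k_*(\mu)$ through $L\mu=\sqrt{2k_*^2-1}\,K(k_*)$, the analog of (\ref{root-k-star}) with $\pi\mu,\sqrt{2-k^2}$ replaced by $L\mu,\sqrt{2k^2-1}$. At $k=k_*$ one has ${\rm cn}=0$, so $p(k_*,\mu)=0$, and (\ref{expansion-K}) gives $\sqrt{1-k_*^2}=4e^{-L\mu}+\mathcal{O}(e^{-3L\mu})$, so $(k_*(\mu),1)$ is exponentially small. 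I record the limiting values $p(k_*,\mu)=0$, $p(k,\mu)\to{\rm sech}(L\mu)$ and $q(k,\mu)\to\tanh(L\mu){\rm sech}(L\mu)$ as $k\to1$, and $q(k_*,\mu)\to4e^{-L\mu}$.

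Step~2 builds the tail on the ring. To leading order $\Psi_+$ solves $-G_+''+G_+=0$ on $J_+$ with the continuity conditions $G_+(L\mu)=G_+((L+2\pi)\mu)=p(k,\mu)$ from (\ref{bc-2}); this forces $G_+(z)=p(k,\mu)\cosh(z-(L+\pi)\mu)/\cosh(\pi\mu)$, whose explicit form gives $\|G_+\|_{H^2(J_+)}\le Cp(k,\mu)\le Ce^{-L\mu}$ with no loss of $\mu^{1/2}$. Writing $\Psi_+=G_++\psi_+$, the correction satisfies $-\psi_+''+\psi_+=2(G_++\psi_+)^3$ with homogeneous Dirichlet data at both ends of $J_+$. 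Since $1-\Delta_z$ with Dirichlet conditions is positive definite with a $\mu$-uniform bounded inverse, a contraction argument produces a unique $\psi_+$ with $\|\psi_+\|_{H^2(J_+)}\le Cp(k,\mu)^3=\mathcal{O}(e^{-3L\mu})$; this step is cleaner than in Lemma~\ref{lemma-soliton-ring} because no near-kernel intervenes. Positivity is automatic: the source $2(G_++\psi_+)^3$ is nonnegative and the Dirichlet Green's function of $1-\Delta_z$ is positive, so $\psi_+\ge0$ and $\Psi_+>0$; the bound (\ref{cnoidal-estimates}) follows together with $\Psi_-(-z)=\Psi_+(z)$.

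Step~3 selects $k$, and is where the real work lies. The one unused condition is the Kirchhoff current relation at $z=L\mu$; because the boundary value problem on $J_+$ with equal Dirichlet data is invariant under reflection about $(L+\pi)\mu$, uniqueness forces $\Psi_+'((L+2\pi)\mu)=-\Psi_+'(L\mu)$, so the current condition collapses to $2\Psi_+'(L\mu)=\Psi_0'(L\mu)$, that is $q(k,\mu)=2p(k,\mu)\tanh(\pi\mu)+\mathcal{O}(e^{-3L\mu})$. The function $q-2p\tanh(\pi\mu)$ is positive at $k_*$ (where $p=0$) and negative as $k\to1$ (where $q\approx2e^{-L\mu}$ while $2p\approx4e^{-L\mu}$), so a root exists. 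The main obstacle is proving monotonicity, $\partial_k p>0$ and $\partial_k q<0$ on $(k_*,1)$, which forces uniqueness of the root. As in (\ref{def-der-p}) the dominant terms come from the exponentially large derivatives $\partial_k{\rm cn}$ and $\partial_k{\rm dn}$; the limiting formulas (\ref{cn-der})--(\ref{dn-der}) give $\partial_k{\rm cn}(L\mu;1)\sim\tfrac14e^{L\mu}>0$ and $\partial_k{\rm dn}(L\mu;1)\sim-\tfrac14e^{L\mu}<0$, hence $\partial_k p\sim ce^{L\mu}>0$ and $\partial_k q\sim-ce^{L\mu}<0$. The technical difficulty is that the uniform remainder (\ref{bound-second-derivative}) is stated only for $\xi\in(0,\pi\mu)$, whereas here the argument has size $L\mu$, exceeding $\pi\mu$ when $L\ge\pi$; I therefore need the Appendix~A estimates for $\xi\in(0,L\mu)$ and the cnoidal $k_*$, which the same method supplies. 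Granting monotonicity, the implicit function theorem yields a unique $k_0(\mu)$; inserting $p\approx e^{-L\mu}(2-\tfrac18\sigma^2)$ and $q\approx e^{-L\mu}(2+\tfrac18\sigma^2)$ with $\sigma:=\sqrt{1-k^2}\,e^{L\mu}$ into $q=2p$ gives $\tfrac38\sigma^2=2$, so $\sigma=4/\sqrt3$, exactly the leading coefficient of (\ref{cnoidal-expansion}); carrying the $\tanh(\pi\mu)$ and $\mathcal{O}(e^{-3L\mu})$ corrections through the implicit function theorem produces the stated remainder.
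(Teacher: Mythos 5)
Your proposal is correct and follows essentially the same route as the paper: the paper's own proof of Lemma \ref{remark-soliton-line} is exactly the mirror of Lemma \ref{lemma-soliton-ring} that you describe, with the same boundary data $p(k,\mu)=\Psi_0(L\mu)$, $q(k,\mu)=-\Psi_0'(L\mu)$, the same threshold $k_*(\mu)$ defined by $L\mu=\sqrt{2k_*^2-1}\,K(k_*)$, a linear $\cosh$-tail on the ring solved by a contraction with homogeneous data, the matching equation $q\approx 2p$ with $p$ increasing and $q$ decreasing in $k$, and the same leading-order balance giving the coefficient $4/\sqrt{3}$.

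Two details in your write-up deserve comment, and both are places where you are \emph{more} careful than the paper's outline. First, your observation that Proposition \ref{prop-elliptic} is stated only for $\xi\in(0,\pi\mu)$ and for the dnoidal $k_*(\mu)$, while the symmetric wave needs it for $\xi\in(0,L\mu)$ and the cnoidal $k_*(\mu)$, identifies a real gap that the paper passes over silently with ``similar estimates''; your remark that the Appendix argument extends (it only uses $|\xi|\lesssim\mu$ and $1-k^2$ exponentially small) is the right fix. Second, your matching equation $q=2p\tanh(\pi\mu)+\mathcal{O}(e^{-3L\mu})$ is the correct one, but your final claim that carrying the $\tanh(\pi\mu)$ factor through the implicit function theorem ``produces the stated remainder'' does not check out. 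Since $\tanh(\pi\mu)=1-2e^{-2\pi\mu}+\mathcal{O}(e^{-4\pi\mu})$, replacing $\tanh(\pi\mu)$ by $1$ costs $\mathcal{O}(e^{-L\mu-2\pi\mu})$, not the $\mathcal{O}(e^{-L\mu-4\pi\mu})$ claimed in the paper's root-finding equation: geometrically, the ring tail reflects off the ring's midpoint after a round trip of length $2\pi\mu$, unlike the asymmetric case where the reflection traverses the segment twice, a distance $4L\mu$; the paper's error term appears to come from a naive swap $\pi\leftrightarrow L$. Solving $q=2p\tanh(\pi\mu)$ honestly gives
\begin{equation*}
\sqrt{1-k^2}=\frac{4}{\sqrt{3}}\,e^{-L\mu}\left[1-\tfrac{4}{3}e^{-2\pi\mu}
+\mathcal{O}\left(\mu e^{-2L\mu},e^{-4\pi\mu}\right)\right],
\end{equation*}
which is consistent with the remainder stated in (\ref{cnoidal-expansion}) only when $L\leq\pi$ (then $e^{-2\pi\mu}\lesssim \mu e^{-2L\mu}$); for $L>\pi$ the $e^{-2\pi\mu}$ correction dominates both stated remainder terms. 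This is a defect inherited from the lemma's statement rather than from your argument: the leading coefficient $4/\sqrt{3}$, which is all that is used downstream (Lemma \ref{lemma-limit}), is unaffected, but you should either keep the $-\tfrac43 e^{-2\pi\mu}$ term explicitly or weaken the claimed remainder, rather than assert the paper's form.
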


\begin{proof}
We only outline the minor differences in the computations compared to the proof given in Lemma \ref{lemma-soliton-ring}.
For the trajectory outside the homoclinic orbit, the corresponding solution of the first-order invariant (\ref{invariant})
is given by the exact expression (\ref{cnoidal}). We now define
\begin{align}
p(k,\mu) & :=  \Psi_0(L \mu) =  \frac{k}{\sqrt{2k^2-1}} {\rm cn}\left(\frac{L \mu}{\sqrt{2k^2 -1}};k\right), \\
q(k,\mu) & :=  -\Psi_0'(L \mu) =
 \frac{k}{2k^2-1} {\rm sn}\left(\frac{L \mu}{\sqrt{2k^2-1}};k\right) {\rm dn}\left(\frac{L \mu}{\sqrt{2k^2-1}};k\right).
\end{align}
The trajectory is already even in $z$. We consider the range of the values of $k$ for which $p(k,\mu) \geq 0$.
Therefore, $k$ is defined in $(k_*(\mu),1)$, where $k_*(\mu)$ is the root of the algebraic equation
$$
\mu L = \sqrt{2k_*^2-1} K(k_*),
$$
which is expanded asymptotically as
$$
\sqrt{1 - k_*^2} = 4 e^{-L \mu} + \mathcal{O}(e^{-3 L \mu}) \quad \mbox{\rm as} \quad \mu \to \infty.
$$
Again, the interval $(k_*(\mu),1)$ is exponentially small as $\mu \to \infty$.

Figure \ref{fig5b} shows the dependencies of $p$ and $q$ versus $k$ in $(k_*(\mu),1)$ for a particular value $\mu = 2$.
The graph illustrates that $p(k,\mu)$ is a monotonically increasing function with respect to $k$ from $0$ at $k = k_*(\mu)$
to
$$
p_*(\mu) := {\rm sech}(L \mu) = 2 e^{-L \mu} +  \mathcal{O}(e^{-3 L \mu})
$$
as $k \to 1$, whereas $q(k,\mu)$ is a monotonically decreasing function in $k$ from
$$
q(k_*(\mu),\mu) = \frac{k_* \sqrt{1 - k_*^2}}{2 k_*^2-1} = 4 e^{-L \mu} +  \mathcal{O}(e^{-3 L \mu})
$$
at $k = k_*(\mu)$ to
$$
q_*(\mu) := {\rm tanh}(L \mu) {\rm sech}(L \mu) = 2 e^{-L \mu} + \mathcal{O}(e^{-3 L \mu})
$$
as $k \to 1$. Again, there is a unique root $k = k_0(\mu)$ in $(k_*(\mu),1)$ of the algebraic equation
$$
q(k,\mu) = 2 p(k,\mu) + \mathcal{O}(e^{-3 L \mu},e^{-L \mu - 4 \pi \mu}) \quad \mbox{\rm as} \quad \mu \to \infty.
$$
The bound (\ref{cnoidal-estimates}), the asymptotic expansion (\ref{cnoidal-expansion}),
and the positivity of the symmetric wave are proved by similar estimates
to those in Lemma \ref{lemma-soliton-ring}.
\end{proof}

\begin{figure}[htp]
\includegraphics[width=12cm]{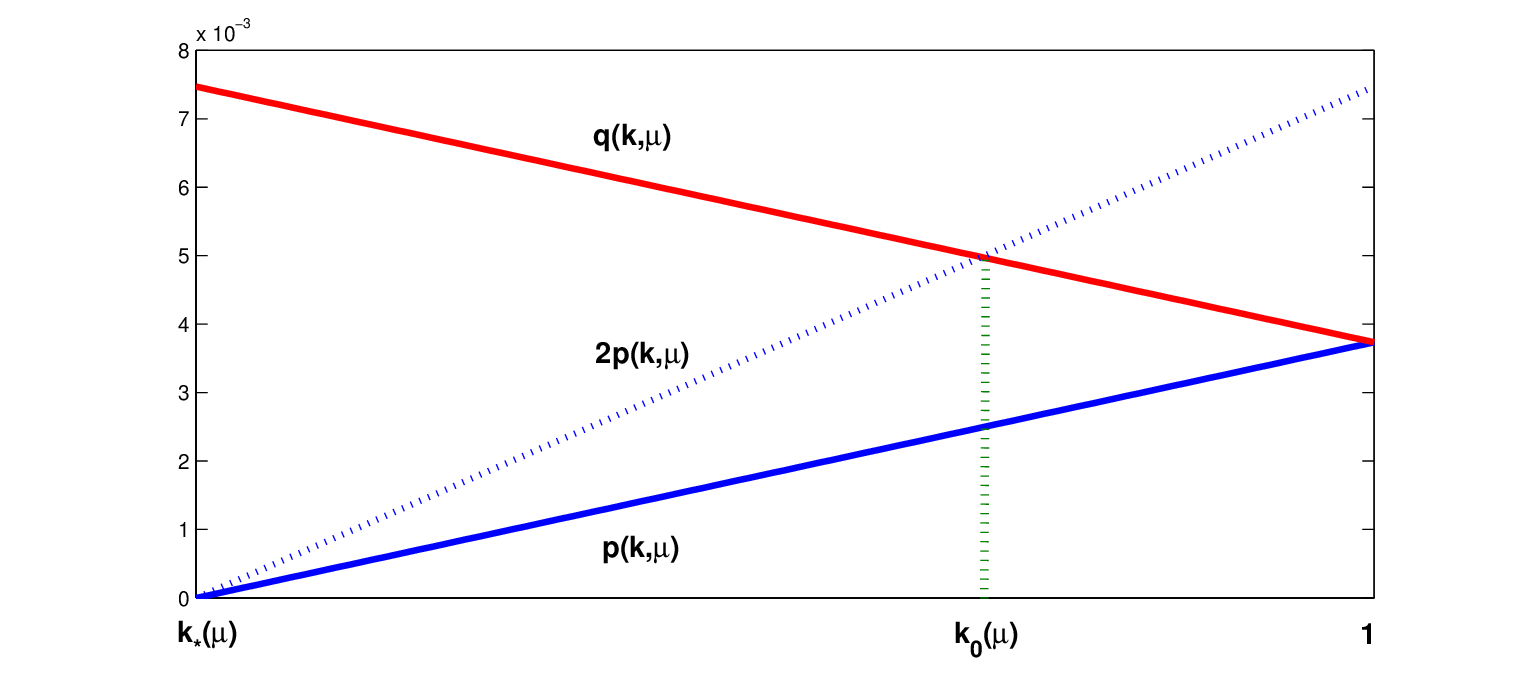}
\caption{The graphs of $p$ (solid blue), $q$ (solid red), and $2 p$ (dashed blue) versus $k$ in $(k_*(\mu),1)$
for $\mu = 2$ and $L = \pi$. The dotted line shows a graphical solution of the equation $q = 2p$ at $k_0(\mu)$.}
\label{fig5b}
\end{figure}

\begin{remark}
Lemma \ref{remark-soliton-line} gives an improvement of Lemma \ref{lemma-soliton-line}, since the symmetric standing
wave $\Psi$ is now proved to be strictly positive and the estimate (\ref{estimate-1}) is now improved to be
\begin{equation}
\label{estimate-3}
\| \Psi - \Psi_{\infty} \|_{L^{\infty}(J_- \cup J_0 \cup J_+)} \leq C e^{-L \mu},
\end{equation}
where the positive constant $C$ is $\mu$-independent.
\end{remark}

\subsection{Energy levels for the symmetric and asymmetric waves}

There exists a simple argument why the ground state of the constrained minimization problem (\ref{minimizer})
is represented by a single solitary wave in the asymptotic limit $\Lambda \to -\infty$.
Indeed, computing asymptotically $E_0$ and $Q_0$ at the representation (\ref{ground-3}), we obtain
\begin{equation}
\label{limit-E-Q}
E_0 \sim -\frac{2}{3} |\Lambda|^{\frac{3}{2}}, \quad
Q_0 \sim 2 |\Lambda|^{\frac{1}{2}}, \quad \mbox{\rm as} \quad
\Lambda \to -\infty.
\end{equation}
For $N$ solitary waves packed in the same graph $I_- \cup I_0 \cup I_+$ at different points, we estimate
$Q_0$ and $E_0$ roughly by multiplying (\ref{limit-E-Q}) by $N$, so that if $Q_0$ is preserved, then
\begin{equation}
|\Lambda|^{\frac{1}{2}} \sim \frac{Q_0}{2N} \quad \Rightarrow \quad E_0 \sim -\frac{Q_0^3}{12 N^2}.
\end{equation}
Therefore, the standing wave of minimal energy as $\Lambda \to -\infty$ corresponds to $N = 1$.
Two single solitary waves are given by Lemmas \ref{lemma-soliton-ring} and \ref{remark-soliton-line}.
The following lemma clarifies the energy levels for the symmetric and asymmetric waves.

\begin{lemma}
\label{lemma-limit}
For sufficiently large negative $\Lambda$, the symmetric wave of Lemma \ref{remark-soliton-line}
has a smaller $Q_0$ at a fixed $\Lambda$ compared to the asymmetric wave of Lemma \ref{lemma-soliton-ring}.
Consequently, it has larger $E_0$ at fixed $Q_0$ for sufficiently large $Q_0$.
\end{lemma}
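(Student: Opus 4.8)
The plan is to reduce the statement to a comparison of charges at a common $\Lambda$ and then to convert that into an energy comparison at a common charge. Using the scaling \eqref{scaling} with $\mu:=|\Lambda|^{1/2}$, I would write $Q_0=\mu\,\mathcal N$ and $E_0=\mu^3\,\mathcal E$, where $\mathcal N:=\int\Psi^2\,dz$ and $\mathcal E:=\int(\Psi_z^2-\Psi^4)\,dz$ are taken over $J_-\cup J_0\cup J_+$; by \eqref{limit-E-Q} both tend to $\mathcal N_\infty=2$ and $\mathcal E_\infty=-\tfrac23$ as $\mu\to\infty$. The first task is to pin down the exponentially small corrections $\mathcal N-2$ for each of the two waves, since the whole statement lives at the level of these corrections.

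For the asymmetric wave of Lemma \ref{lemma-soliton-ring} I would integrate \eqref{dnoidal} exactly, as $\int_{J_+}\Psi_+^2\,dz=\tfrac{2}{\sqrt{2-k^2}}\,E\!\big(\mathrm{am}(\pi\mu/\sqrt{2-k^2}),k\big)$, and feed in the $k\to1$ expansions of the complete elliptic integrals $K(k)$, $E(k)$ together with the calibration $\sqrt{1-k^2}\sim\tfrac{4}{\sqrt3}e^{-\pi\mu}$ from \eqref{dnoidal-expansion}; the positive pedestal of the dnoidal profile over the whole loop yields a correction $\mathcal N^{\mathrm{ring}}-2$ of order $+\mu e^{-2\pi\mu}$. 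For the symmetric wave of Lemma \ref{remark-soliton-line} the analogue uses $\int_0^{y}\mathrm{cn}^2=k^{-2}\big[E(\mathrm{am}(y),k)-(1-k^2)\,y\big]$ with $y\sim L\mu$, and here the subtracted term $-(1-k^2)\,y$ produces a correction of order $-\mu e^{-2L\mu}$. In either case the contributions of the tails in the complementary edges are controlled by the bounds \eqref{dnoidal-estimates} and \eqref{cnoidal-estimates} and are subdominant. This should give $\mathcal N^{\mathrm{central}}<2<\mathcal N^{\mathrm{ring}}$ for all $L>0$ and all large $\mu$, hence $Q_0^{\mathrm{central}}(\Lambda)<Q_0^{\mathrm{ring}}(\Lambda)$ at fixed $\Lambda$ — the first assertion, and notably independent of whether $L\lessgtr\pi$.

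To reach the energy statement I would invoke the slope relation $dE_0/dQ_0=\Lambda$, valid along each branch. Since $\mathcal N$ is increasing in $\mu$ and $\mathcal N^{\mathrm{central}}<\mathcal N^{\mathrm{ring}}$, the symmetric wave realises any prescribed large charge $Q_0$ at a strictly more negative $\Lambda$, so $\Lambda^{\mathrm{central}}(Q_0)<\Lambda^{\mathrm{ring}}(Q_0)$ and therefore $\tfrac{d}{dQ_0}\big(E_0^{\mathrm{central}}-E_0^{\mathrm{ring}}\big)<0$: the energy gap is strictly decreasing in $Q_0$. The hard part is then to fix the sign of this gap. At a common charge the leading ($\mu e^{-2\pi\mu}$ and $\mu e^{-2L\mu}$) corrections cancel — both waves are the same soliton of \eqref{limit-E-Q} to leading exponential order — so $E_0^{\mathrm{central}}-E_0^{\mathrm{ring}}$ is smaller than either individual correction and its sign is dictated by the sub-leading, tail-level terms of $\mathcal N$ and $\mathcal E$. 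To resolve this I would compute $\mathcal E$ to the same order through the first-integral identity $\mathcal E=-\tfrac13\mathcal N+\tfrac13\sum_{\mathrm{edges}}I\,\ell$, with $I=\Psi_z^2-\Psi^2+\Psi^4$ constant on each edge, which is available because the Kirchhoff conditions make the boundary contributions $[\Psi\Psi_z]$ telescope to zero at every junction, and then combine the resulting exponentially small gap with the monotonicity above and with the fact, from Lemma \ref{lemma-3} and Remark \ref{remark-4}, that just past the bifurcations the asymmetric wave is the ground state while the symmetric one is not. Showing that these ingredients indeed force the gap to be negative for all sufficiently large $Q_0$, uniformly in $L$, is the crux of the argument.
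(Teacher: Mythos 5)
Your first part is correct and is essentially the paper's own proof of the first assertion: both you and the paper compute the rescaled charge from the exact dnoidal/cnoidal profiles via the $k\to 1$ expansion of the complete elliptic integral $E(k)$ and the calibration of $1-k^2$, arriving at $Q_0^{\rm asym} = 2|\Lambda|^{1/2} + \tfrac{16}{3}\pi|\Lambda| e^{-2\pi|\Lambda|^{1/2}} + \cdots$ and $Q_0^{\rm sym} = 2|\Lambda|^{1/2} - \tfrac{16}{3}L|\Lambda| e^{-2L|\Lambda|^{1/2}} + \cdots$, whence $Q_0^{\rm sym} < 2|\Lambda|^{1/2} < Q_0^{\rm asym}$ at fixed $\Lambda$, for every $L>0$.

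The second part has a genuine gap, and it cannot be closed in the direction you want, because your own two observations already determine the sign of the energy gap, with the opposite outcome. You established (i) $\frac{d}{dQ_0}\bigl(E_0^{\rm sym}-E_0^{\rm asym}\bigr)=\Lambda^{\rm sym}(Q_0)-\Lambda^{\rm asym}(Q_0)<0$ for large $Q_0$; and (ii) is implicit in your setup: along each branch $E_0+\frac{1}{12}Q_0^3=|\Lambda|^{3/2}\bigl(\mathcal E+\frac{1}{12}\mathcal N^3\bigr)=\mathcal O\bigl(|\Lambda|^{2}e^{-2c|\Lambda|^{1/2}}\bigr)\to 0$ with $c=\pi$ or $L$, so the gap $E_0^{\rm sym}-E_0^{\rm asym}$ tends to zero as $Q_0\to\infty$. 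A function that is strictly decreasing near infinity and tends to zero there is strictly positive; hence $E_0^{\rm sym}>E_0^{\rm asym}$ for all large $Q_0$, which contradicts the lemma. No input from the bifurcation region $Q_0\gtrsim Q_0^{**}$ (Lemma \ref{lemma-3}, Remark \ref{remark-4}) can repair this: the sign at infinity is pinned by (i) and (ii) alone, and no sub-leading computation of $\mathcal N$, $\mathcal E$ is needed. Explicitly, the correct integrand in the relation (\ref{slope-E-versus-Q}) is
\begin{equation*}
\Lambda+\tfrac14 Q_0^2=-\bigl(|\Lambda|^{1/2}+\tfrac12 Q_0\bigr)\bigl(|\Lambda|^{1/2}-\tfrac12 Q_0\bigr)\approx -Q_0\bigl(|\Lambda|^{1/2}-\tfrac12 Q_0\bigr),
\end{equation*}
so that $E_0^{\rm sym}+\frac1{12}Q_0^3=\int_{Q_0}^{\infty}\frac23 L s^3 e^{-Ls}\,ds+\cdots>0$, while $E_0^{\rm asym}+\frac1{12}Q_0^3=-\int_{Q_0}^{\infty}\frac23 \pi s^3 e^{-\pi s}\,ds+\cdots<0$. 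This also locates the flaw in the paper's own proof: in its displayed integrals the paper inserts as integrand the correction to $|\Lambda|^{1/2}$, namely $\pm\frac23 c\,Q_0^2 e^{-cQ_0}$, rather than $\Lambda+\frac14 Q_0^2$; the omitted factor $\approx -Q_0$ flips the sign (and the power of $Q_0$), and with it the conclusion. The corrected statement is $E_0^{\rm asym}<-\frac1{12}Q_0^3<E_0^{\rm sym}$: for large charge it is the wave localized in the ring, not the one in the central segment, that has smaller energy (this is precisely the correction made in the authors' published erratum). So do not try to force the gap negative; carried out correctly, your monotonicity argument is the shortest proof of the reverse inequality, and the second assertion as stated is false.
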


\begin{proof}
By the scaling transformation (\ref{scaling}), we have $Q_0 = \mu \tilde{Q}_0$,
where
$$
\tilde{Q}_0 = \| \Psi \|_{L^2(J_- \cup J_0 \cup J_+)}^2 \quad \mbox{\rm and} \quad \mu = |\Lambda|^{1/2}.
$$
We represent $\tilde{Q}_0$ at the asymmetric wave of Lemma \ref{lemma-soliton-ring}
as the sum of three terms
$$
\tilde{Q}_0 = \| \Psi_+ \|_{L^2(J_+)}^2 + \| \Psi_0 \|_{L^2(J_0)}^2 + \| \Psi_- \|_{L^2(J_-)}^2.
$$
By the estimate (\ref{dnoidal-estimates}), there exists a positive $\mu$-independent constant $C$ such that
for sufficiently large $\mu$, we have
\begin{equation}
\label{estimate-charge-1}
\| \Psi_0 \|_{L^2(J_0)}^2 + \| \Psi_- \|_{L^2(J_-)}^2 \leq C e^{-2 \pi \mu}.
\end{equation}
On the other hand, by the explicit expression (\ref{dnoidal}), we have
\begin{align}
\nonumber
\| \Psi_+ \|_{L^2(J_+)}^2 & =  \frac{2}{\sqrt{2-k^2}} \int_0^{\pi \mu/\sqrt{2-k^2}} {\rm dn}(\xi;k)^2 d\xi \\
& =  \frac{2}{\sqrt{2-k^2}} \int_0^{K(k)} {\rm dn}(\xi;k)^2 d\xi + \frac{2}{\sqrt{2-k^2}}
\int_{K(k)}^{\pi \mu/\sqrt{2-k^2}} {\rm dn}(\xi;k)^2 d\xi. \label{estimate-charge-2}
\end{align}
It follows from the asymptotic expansions (\ref{dnoidal-expansion}) and (\ref{expansion-K}) that
the second term has the following asymptotic behavior as $\mu \to \infty$:
\begin{eqnarray}
\label{estimate-charge-0}
\left| \int_{K(k)}^{\pi \mu/\sqrt{2-k^2}} {\rm dn}(\xi;k)^2 d\xi \right| =
\int_{\pi \mu + \mathcal{O}(\mu e^{-\pi \mu})}^{\pi \mu + \log(\sqrt{3}) + \mathcal{O}(\mu e^{-2\pi \mu}, e^{-4L \mu})}
{\rm dn}(\xi;k)^2 d\xi = \mathcal{O}(e^{-2\pi \mu}),
\end{eqnarray}
since ${\rm dn}(\xi;k)^2 = \mathcal{O}(e^{-2\pi \mu})$ for every $\xi = \mathcal{O}(\mu)$ and $k \in (k_*(\mu),1)$
as $\mu \to \infty$. Therefore, the second term in (\ref{estimate-charge-2}) is comparable with the other
remainder terms in (\ref{estimate-charge-1}).
We will show that the first term in (\ref{estimate-charge-1}) has the larger value as $\mu \to \infty$.
We recall that (see 8.114 in \cite{Grad})
\begin{align*}
E(k) & :=  \int_0^{K(k)} {\rm dn}(\xi;k)^2 d\xi \\
 & =  1 + \frac{1}{2} (1-k^2) \left[ \log \frac{4}{\sqrt{1-k^2}} - \frac{1}{2} \right] + \mathcal{O}\left((1-k^2)^2|\log(1-k^2)|\right)\quad
\mbox{\rm as} \quad k \to 1,
\end{align*}
where $E(k)$ is a complete elliptic integral of the second kind. As a result, we obtain
\begin{eqnarray*}
\frac{2}{\sqrt{2-k^2}} \int_0^{K(k)} {\rm dn}(\xi;k)^2 d\xi = 2 + \frac{16}{3}
e^{-2\pi \mu} \left[ \pi \mu + \log(\sqrt{3}) - \frac{3}{2} \right]  + \mathcal{O}(\mu e^{-4 \pi \mu},e^{-2 \pi \mu - 4L \mu}).
\end{eqnarray*}
Combining this estimate with (\ref{estimate-charge-1}), (\ref{estimate-charge-2}), and (\ref{estimate-charge-0}),
we obtain for the asymmetric wave of Lemma \ref{lemma-soliton-ring} that
\begin{equation}
Q_0^{\rm asym} = 2 |\Lambda|^{1/2} + \frac{16}{3} \pi |\Lambda| e^{-2 \pi |\Lambda|^{1/2}} +
\mathcal{O}\left(|\Lambda|^{1/2} e^{-2 \pi |\Lambda|^{1/2}}\right) \quad
\mbox{\rm as} \quad \Lambda \to -\infty.
\end{equation}

We now report a similar computation for the symmetric wave of Lemma \ref{remark-soliton-line}.
We represent $\tilde{Q}_0$ for the symmetric wave as the sum of two terms
$$
\tilde{Q}_0 = \| \Psi_0 \|_{L^2(J_0)}^2 + 2 \| \Psi_+ \|_{L^2(J_+)}^2.
$$
By the estimate (\ref{cnoidal-estimates}), there exists a positive $\mu$-independent constant $C$ such that
for sufficiently large $\mu$, we have
\begin{equation}
\label{estimate-charge-1-cn}
 \| \Psi_+ \|_{L^2(J_+)}^2 \leq C e^{-2L \mu}.
\end{equation}
On the other hand, by the explicit expression (\ref{cnoidal}), we have
\begin{align}
\nonumber
\| \Psi_0 \|_{L^2(J_0)}^2 & =  \frac{2k^2}{\sqrt{2k^2-1}} \int_0^{\mu L/\sqrt{2k^2-1}} {\rm cn}(\xi;k)^2 d\xi \\
\nonumber
& =  \frac{2}{\sqrt{2k^2-1}} \int_0^{\mu L/\sqrt{2k^2-1}} {\rm dn}(\xi;k)^2 d\xi - \frac{2(1-k^2) \mu L}{2k^2-1} \\
& =  \frac{2}{\sqrt{2k^2-1}} \int_0^{K(k)} {\rm dn}(\xi;k)^2 d\xi
- \frac{2(1-k^2) \mu L}{2k^2-1} + \frac{2}{\sqrt{2k^2-1}}
\int_{K(k)}^{\mu L/\sqrt{2k^2-1}} {\rm dn}(\xi;k)^2 d\xi.
\label{estimate-charge-2-cn}
\end{align}
By the same estimate as in (\ref{estimate-charge-0}), the last term in (\ref{estimate-charge-2-cn}) is comparable with
the estimate (\ref{estimate-charge-1-cn}), whereas the other two terms give a larger contribution.
We now compute these terms explicitly
\begin{eqnarray*}
\frac{2}{\sqrt{2k^2-1}} E(k) - \frac{2(1-k^2) \mu L}{2k^2-1}  = 2 - \frac{16}{3} e^{-2 L \mu} \left[ L \mu +
\log(\sqrt{3}) - \frac{3}{2} \right] + \mathcal{O}(\mu e^{-4 L \mu},e^{-2L \mu - 4 \pi \mu}).
\end{eqnarray*}
Combining this estimate with (\ref{estimate-charge-1-cn}) and (\ref{estimate-charge-2-cn}),
we obtain for the symmetric wave of Lemma \ref{remark-soliton-line} that
\begin{equation}
Q_0^{\rm sym} = 2 |\Lambda|^{1/2} - \frac{16}{3} L |\Lambda| e^{-2 L |\Lambda|^{1/2}}
+ \mathcal{O}\left(|\Lambda|^{1/2} e^{-2 L |\Lambda|^{1/2}}\right) \quad
\mbox{\rm as} \quad \Lambda \to -\infty.
\end{equation}
For sufficiently large $|\Lambda|$, we have $Q_0^{\rm sym} < 2 |\Lambda|^{1/2} < Q_0^{\rm asym}$, which proves the
first assertion of the lemma.

We shall now prove that this estimate for $Q_0$ at a fixed $\Lambda$ can be transferred to the similar estimate for
$E_0$ at a fixed $Q_0$. This is done from the variational principle for the standing wave solutions
of the stationary NLS equation (\ref{statNLS}):
$$
\frac{d E_0}{d \Lambda} = \Lambda \frac{d Q_0}{d \Lambda},
$$
which implies that
\begin{equation}
\label{slope-E-versus-Q}
\frac{d E_0}{d Q_0} = \Lambda,
\end{equation}
where $\Lambda$ is supposed to be expressed from $Q_0$. Since $Q_0 = \mathcal{O}(|\Lambda|^{1/2})$ as $\Lambda \to -\infty$,
the dependence $\Lambda \mapsto Q_0$ is a decreasing diffeomorphism, which can be inverted. Indeed, for the symmetric wave
of Lemma \ref{remark-soliton-line}, we have
$$
|\Lambda|^{1/2} = \frac{1}{2} Q_0 + \frac{2}{3} L Q_0^2 e^{-L Q_0} + \mathcal{O}(Q_0 e^{-L Q_0}), \quad \mbox{\rm as} \quad
Q_0 \to \infty,
$$
so that (\ref{slope-E-versus-Q}) implies that
$$
E_0^{\rm sym} + \frac{1}{12} Q_0^3 = \int_{+\infty}^{Q_0} \left[ -\frac{2}{3} L Q_0^3 e^{-L Q_0} + \mathcal{O}(Q_0^2 e^{-L Q_0}) \right] d Q_0
> 0 \quad \mbox{\rm as} \quad Q_0 \to \infty.
$$
On the other hand, for the asymmetric wave
of Lemma \ref{lemma-soliton-ring}, we have
$$
|\Lambda|^{1/2} = \frac{1}{2} Q_0 - \frac{2}{3} \pi Q_0^2 e^{-\pi Q_0} + \mathcal{O}(Q_0 e^{-\pi Q_0}), \quad \mbox{\rm as} \quad
Q_0 \to \infty,
$$
so that (\ref{slope-E-versus-Q}) implies that
$$
E_0^{\rm asym} + \frac{1}{12} Q_0^3 = \int_{+\infty}^{Q_0} \left[  \frac{2}{3} \pi Q_0^3 e^{-\pi Q_0}
+ \mathcal{O}(Q_0^2 e^{-\pi Q_0}) \right] d Q_0 < 0 \quad \mbox{\rm as} \quad Q_0 \to \infty.
$$
Therefore, for sufficiently large $Q_0$, we have
$E_0^{\rm asym} < -\frac{1}{12} Q_0^3 < E_0^{\rm sym}$, which proves the second assertion of the lemma.
\end{proof}

\begin{lemma}
\label{lemma-eigenvalue}
There exists $\tilde{Q}_0^{**} \in (Q_0^{**},\infty)$ such that
the symmetric wave is a local constrained minimizer of energy 
for fixed $Q_0 \in (\tilde{Q}_0^{**},\infty)$. In particular, the second eigenvalue of the linearization operator $L_+$
at the symmetric wave of Lemma \ref{remark-soliton-line} is strictly positive.
\end{lemma}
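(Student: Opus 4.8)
The plan is to invoke the theory of constrained minimizers (Shatah--Strauss \cite{SS}, Weinstein \cite{W}) together with the localization of the symmetric wave proved in Lemma \ref{remark-soliton-line}. Working in the scaled variables of (\ref{scaling}), the second variation at the symmetric wave $\Psi$ is governed by $L_+ = -\Delta_z + 1 - 6\Psi^2$ and $L_- = -\Delta_z + 1 - 2\Psi^2$, the counterparts of (\ref{L-plus}) and (\ref{L-minus}). Because $\Psi$ is strictly positive and $L_-\Psi = 0$, the zero eigenvalue is the simple lowest eigenvalue of $L_-$, so $L_- \geq 0$ and $L_-$ contributes only the phase-rotation mode; the entire question thus reduces to the negative index of $L_+$, and the slope condition. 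I would first record this reduction, and note that $\frac{d}{d\Lambda}Q_0 < 0$ holds for the symmetric wave: from the expansion $Q_0^{\rm sym} = 2|\Lambda|^{1/2} - \tfrac{16}{3}L|\Lambda|e^{-2L|\Lambda|^{1/2}} + \cdots$ of Lemma \ref{lemma-limit}, $Q_0^{\rm sym}$ is increasing in $|\Lambda|$, hence decreasing in $\Lambda$.

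The spectral count for $L_+$ is organized by parity. Since $\Psi$ is even with respect to the center of $J_0$, the operator $L_+$ commutes with $z\mapsto -z$ and splits into even and odd parts. As $\mu\to\infty$ it converges pointwise to $L_\infty = -\frac{d^2}{dz^2} + 1 - 6\,{\rm sech}^2(z)$ on the central segment and to $-\Delta_z + 1 \geq 1$ on the rings, exactly as in the proof of Lemma \ref{lemma-soliton-line}. The Pöschl--Teller operator $L_\infty$ has only two eigenvalues below its continuous spectrum $[1,\infty)$: the simple negative eigenvalue $-3$, carried by the even function ${\rm sech}^2(z)$, and the zero eigenvalue, carried by the odd function $\Psi_\infty'(z) = -\,{\rm sech}(z)\tanh(z)$. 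Consequently, for $\mu$ large the even part of $L_+$ carries exactly one negative eigenvalue (a perturbation of $-3$) with all higher spectrum near or above $1$, while the odd part carries a single eigenvalue $\lambda_0(\mu)$ bifurcating from $0$, again with the remaining spectrum near or above $1$. The assertion that the second eigenvalue of $L_+$ is strictly positive is therefore equivalent to $\lambda_0(\mu) > 0$.

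Showing $\lambda_0(\mu) > 0$ is the crux and the step I expect to be the main obstacle. On the odd subspace functions vanish at $z=0$, so this is a Dirichlet eigenvalue problem on the half-graph $[0,(L+2\pi)\mu]$ whose limiting eigenvalue is the isolated, simple, bottom eigenvalue $0$ of $L_\infty$ on $[0,\infty)$ with Dirichlet condition (the odd eigenfunction $\Psi_\infty'$ indeed vanishes at the origin). The shift $\lambda_0(\mu)$ is thus exponentially small and its sign must be extracted by matched asymptotics, parallel to the existence analysis of Lemmas \ref{lemma-soliton-ring} and \ref{remark-soliton-line}. The guiding heuristic is encouraging: testing with $v\approx {\rm sech}(z)\tanh(z)$ and integrating $-\Delta_z$ by parts on $[0,L\mu]$ produces a boundary contribution $-v'(L\mu)v(L\mu)$, and since $v(L\mu)\approx 2e^{-L\mu}$, $v'(L\mu)\approx -2e^{-L\mu}$, this term is positive of order $e^{-2L\mu}$, while the continuation into the ring, where the operator is bounded below by $1$, also contributes positively. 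To make this rigorous I would solve the linear eigenvalue problem $L_+ v = \lambda v$ on $[0,L\mu]$ with the Dirichlet condition at $z=0$ (expressing $v$ through the decaying and growing fundamental solutions near $\lambda = 0$), match it through the Kirchhoff conditions at $z=L\mu$ to the ring, and reduce the solvability to a transcendental root equation for $\lambda$ near $0$; the leading asymptotics of that equation must then be shown to force $\lambda_0(\mu) > 0$. This also yields $\ker L_+ = \{0\}$, i.e. nondegeneracy.

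Granting $\lambda_0(\mu) > 0$, the operator $L_+$ has exactly one simple negative eigenvalue, trivial kernel, and otherwise positive spectrum. Combined with $\frac{d}{d\Lambda}Q_0 < 0$, the criterion of \cite{SS,W} removes the single negative eigenvalue of $L_+$ in the constrained space $L^2_c$ of (\ref{constrained-L-2}), so the symmetric wave is a nondegenerate constrained local minimizer of $E$ subject to fixed $Q_0$, hence orbitally stable. To upgrade ``local'' to ``global ground state'' for $Q_0$ sufficiently large, I would combine the energy comparison of Lemma \ref{lemma-limit}, which gives $E_0^{\rm sym} < -\tfrac{1}{12}Q_0^3 < E_0^{\rm asym}$, with the $N$-soliton lower bound $E \gtrsim -Q_0^3/(12N^2)$ sketched at the beginning of this subsection: for large $Q_0$ any global minimizer, which exists by compactness of $I_-\cup I_0\cup I_+$, must concentrate after scaling on a single solitary wave, hence by the uniqueness established in Lemmas \ref{lemma-soliton-ring} and \ref{remark-soliton-line} must be one of the two truncated solitary waves, and by the energy comparison it is the symmetric one. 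This fixes the threshold $\tilde{Q}_0^{**}$ and completes the proof.
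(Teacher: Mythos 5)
Your reduction of the problem (positivity of $\ell_-$ via $\ell_-\Psi = 0$ and $\Psi > 0$, the slope condition read off from the expansion of $Q_0^{\rm sym}$ in Lemma \ref{lemma-limit}, and the parity splitting of $\ell_+$ into an even part with exactly one negative eigenvalue and an odd part with a single eigenvalue $\lambda_0(\mu)$ bifurcating from zero) matches the paper's setup, and you have correctly identified that everything hinges on proving $\lambda_0(\mu) > 0$. But that is exactly the step you do not prove. Your heuristic --- testing the quadratic form with $v \approx {\rm sech}(z)\tanh(z)$ and noting that the boundary term $-v'(L\mu)v(L\mu) \approx 4e^{-2L\mu}$ is positive --- is logically inconclusive: by the variational principle, positivity of $\langle \ell_+ v, v\rangle$ at one admissible test function only yields a positive \emph{upper} bound on the lowest odd eigenvalue, which is compatible with $\lambda_0(\mu)$ being negative, zero, or positive. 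The rigorous route you then propose (solve the Dirichlet problem on $[0,L\mu]$, match through the Kirchhoff conditions into the ring, and extract the sign of the root of a transcendental equation near $\lambda = 0$) is a plausible program, but it is precisely the hard computation, and you have not carried it out; as written, the proposal defers the crux rather than closing it. (Your final ``local to global'' upgrade is likewise only sketched, but there you are on the same level of rigor as the paper, which settles the ground-state claim by combining the spectral statement with Lemma \ref{lemma-limit}; so the decisive gap is the eigenvalue sign.)

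For comparison, the paper closes this gap with a different and much cleaner device. Since $\ell_+\Psi' = 0$ pointwise but $\Psi'$ violates the Kirchhoff conditions, one writes the eigenfunction in the product form $U(z) = a(z)\Psi'(z)$, derives from the vertex conditions the boundary relations $2a_0(L\mu) = a_+(L\mu) = -a_+(L\mu+2\pi\mu)$ in (\ref{bc-a-1}) together with the derivative condition (\ref{bc-a-2}), and then multiplies $\ell_+ U = \lambda U$ by $\Psi'$ to obtain $-\frac{d}{dz}\bigl[(\Psi')^2 a'\bigr] = \lambda (\Psi')^2 a$. Pairing with $a$ and integrating by parts gives the identity (\ref{quad-form}), $\lambda\|a\Psi'\|_{L^2}^2 = \|a'\Psi'\|_{L^2}^2 - \bigl[ a a' (\Psi')^2 \bigr]$, where the bracket is the total jump across the vertices, which the boundary relations collapse to $3a_+(L\mu)^2\,\Psi_+'(L\mu)\,\Psi_+''(L\mu)$. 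Lemma \ref{remark-soliton-line} gives $\Psi_+'(L\mu) < 0$ and $\Psi_+''(L\mu) > 0$ for large $\mu$, so the jump is negative and hence $\lambda > 0$: the sign comes out exactly, with no matched asymptotics and no transcendental root analysis. If you wish to keep your own route, this product-substitution identity is the missing ingredient; otherwise you must actually carry your matching argument through to an explicit determination of the sign of the root near $\lambda = 0$, which is what your proposal leaves open.
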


\begin{proof}
By using the scaling transformation (\ref{scaling}), we transform the linearized operators $L_+$ and $L_-$
given by (\ref{L-plus}) and (\ref{L-minus}) to the form $L_{\pm} = |\Lambda| \ell_{\pm}$, where
\begin{align*}
\ell_+ & =  - \Delta_z + 1 - 6 \Psi^2, \\
\ell_- & =  - \Delta_z + 1 - 2 \Psi^2,
\end{align*}
where both operators are defined on the domain $\mathcal{D}(\Delta_z)$ in $L^2(J_- \cup J_- \cup J_+)$.
We consider the symmetric standing wave $\Psi$ given by Lemma \ref{remark-soliton-line}.

Since $\ell_- \Psi = 0$ and $\Psi(z) > 0$ for every $z \in J_- \cup J_- \cup J_+$ by Lemma \ref{remark-soliton-line},
the operator $\ell_-$ is positive definite. Therefore, we only need to show that
the operator $\ell_+$ has a simple negative eigenvalue and no zero eigenvalue. It is clear that
$\ell_+$ is not positive definite because $\langle \ell_+ \Psi, \Psi \rangle_{L^2} = - 4 \| \Psi \|_{L^4}^4 < 0$.

In the limit $\mu \to \infty$, $\ell_+$ converges pointwise to the operator
$$
\ell_+^{\infty} = -\frac{d^2}{d z^2} + 1 - 6 {\rm sech}^2(z) : H^2(\mathbb{R}) \to L^2(\mathbb{R}),
$$
which admits a simple negative eigenvalue and a simple zero eigenvalue. Therefore, we only need to
show that the simple zero eigenvalue of $\ell_+^{\infty}$ becomes a positive eigenvalue of $\ell_+$ for large but finite $\mu$.

We note that $\ell_+ \Psi'(z) = 0$, although $\Psi'(z)$ does not satisfy the Kirchhoff boundary conditions
in $\mathcal{D}(\Delta_z)$. To correct the boundary conditions, we write an eigenfunction $U \in \mathcal{D}(\Delta_z)$ of
the eigenvalue problem $\ell_+ U = \lambda U$ in the product form $U(z) = a(z) \Psi'(z)$.
The amplitude function $a : J_- \cup J_- \cup J_+ \to \mathbb{R}$ ensures that the eigenfunction
$U$ satisfies the Kirchhoff boundary conditions.

If $\Psi$ is even, then $\Psi'$ is odd, whereas the amplitude $a$ is even with respect to $z$. We recall that
$$
\Psi_+'(L \mu) = -\Psi_+'(L \mu + 2 \pi \mu) = \frac{1}{2} \Psi_0'(L \mu),
$$
due to spatial symmetry of the component $\Psi_+$. Therefore, the continuity boundary conditions
$$
a_0(L \mu) \Psi_0'(L \mu) = a_+(L \mu) \Psi_+'(L \mu) = a_+(L \mu + 2\pi \mu) \Psi_+'(L \mu + 2\pi \mu)
$$
yield the boundary values for $a$:
\begin{equation}
\label{bc-a-1}
2 a_0(L \mu) = a_+(L \mu) = -a_+(L \mu + 2\pi \mu).
\end{equation}
On the other hand, $\Psi''$ is expressed by the stationary NLS equation (\ref{statNLS-limit-graph}),
so that $\Psi''$ is continuous at the vertex points. Therefore, the derivative boundary condition
$$
U_0'(L \mu) = U_+'(L \mu) - U_+'(L \mu + 2 \pi \mu)
$$
yields the boundary values for the derivative of $a$:
\begin{equation}
\label{bc-a-2}
\Psi_+'(L \mu) \left[ 2 a_0'(L \mu) - a_+'(L \mu) - a_+'(L \mu + 2 \pi \mu) \right] = \frac{3}{2} a_0(L \mu) \Psi_+''(L \mu).
\end{equation}
Thanks to the symmetry condition, we are looking for odd $U$ and even $a$, so that the conditions on $a$ at
the other vertex point repeat boundary conditions (\ref{bc-a-1}) and (\ref{bc-a-2}).

After the boundary conditions (\ref{bc-a-1}) and (\ref{bc-a-2}) are identified, we substitute the
product form $U(z) = a(z) \Psi'(z)$ into the eigenvalue problem $\ell_+ U = \lambda U$. After multiplying the
resulting equation by $\Psi'(z)$, we obtain
$$
- \frac{d}{dz} \left[ \left( \Psi' \right)^2 \frac{da}{dz} \right] = \lambda \left( \Psi' \right)^2 a.
$$
After multiplying this equation by $a$ and integrating by parts, we obtain
\begin{equation}
\label{quad-form}
\lambda \| a \Psi' \|_{L^2}^2 = \| a' \Psi' \|^2_{L^2} - \left[ a a' \left( \Psi' \right)^2 \right],
\end{equation}
where the squared brackets indicate the total jump at the vertex points:
$$
\left[ f \right] := f_-(-L \mu) - f_-(-L \mu - 2\pi \mu) + f_0(L \mu) - f_0(-L \mu) + f_+(L \mu + 2 \pi \mu) - f_+(L \mu).
$$
To compute the total jump explicitly, we use the symmetry on $a$ and $\Psi'$,
as well as the boundary conditions (\ref{bc-a-1}) and (\ref{bc-a-2}). As a result of straightforward computations, we obtain
$$
\left[ a a' \left( \Psi' \right)^2 \right] = 3 a_+(L \mu)^2 \Psi_+'(L \mu) \Psi_+''(L \mu).
$$
By Lemma \ref{remark-soliton-line}, for sufficiently large $\mu$, we have $\Psi_+''(L \mu) > 0$ and $\Psi_+'(L \mu) < 0$,
therefore, $\left[ a a' \left( \Psi' \right)^2 \right] < 0$.
The quadratic form (\ref{quad-form}) implies that the corresponding
eigenvalue $\lambda$ is positive. This completes the proof of the lemma.
\end{proof}

\begin{remark}
Lemmas \ref{lemma-limit} and \ref{lemma-eigenvalue} prove the assertions of Theorem \ref{theorem-limit}.
\end{remark}

\begin{remark}
The method of the proof of Lemma \ref{lemma-eigenvalue} is inconclusive
for the asymmetric standing wave given by Lemma \ref{lemma-soliton-ring}. Indeed, the total jump
condition without spatial symmetry of the asymmetric standing wave $\Psi$ are given by
$$
\left[ a a' \left( \Psi' \right)^2 \right] =
\frac{3}{2} a_+(L \mu)^2 \Psi_+'(L \mu) \Psi_+''(L \mu) -
\frac{3}{2} a_-(-L \mu)^2 \Psi_-'(-L \mu) \Psi_-''(-L \mu),
$$
where $\Psi'_{\pm}(\pm L \mu) > 0$ and $\Psi''_{\pm}(\pm L \mu) > 0$ if $\mu$ is sufficiently large.
Therefore, the sign of $\left[ a a' \left( \Psi' \right)^2 \right]$ depends on the balance
between $a_+(L \mu)$ and $a_-(-L \mu)$ relative to $\Psi_+(L\mu)$ and $\Psi_-(-L \mu)$.  \end{remark}

\section{Numerical approximations of the ground state}

Here we illustrate numerically the construction of the standing waves of the
stationary NLS equation (\ref{statNLS}) and the corresponding ground state
of the constrained minimization problem (\ref{minimizer}).

\subsection{Numerical Methods}

To compute solutions of the stationary NLS equation (\ref{statNLS}) for a given $\Lambda$,
we will use both a Newton's method
(largely the {\it Matlab} based program {\it nsoli}),
as well as the Petviashvilli method \cite{Pelin-book,Yang-book}.
Rigorous convergence estimates for the Petviashvili method have been established in \cite{PS}
and recently refined in \cite{OSSS}.  Often, we will use the Petviashvilli method
to initially land on a branch, then continue using the more delicate Newton's method machinery.
The basic approach to the Petviashvilli method starts
with an initial guess $u_0$, which will generally take to be a Gaussian function centered either
at the central link or at one of the two loops.  Then, for a given $\Lambda < 0$, we construct solutions to
the stationary NLS equation \eqref{statNLS} by defining
\begin{equation*}
u_{n+1} = M[u_n]^\gamma ( |\Lambda| - \Delta)^{-1} (2 |u_n|^2 u_n)
\end{equation*}
with
\begin{equation*}
M[u] = \frac{ \langle  (|\Lambda| - \Delta) u, u \rangle_{L^2}}{ 2 \|u\|^4_{L^4}}
\end{equation*}
and $\gamma = \frac{3}{2}$.  We iterate until $|u_{n+1}-u_n| < 10^{-14}$, then
declare such the final state to be a good numerical approximation to a fixed point.
Once we have constructed a solution that is centered either at the central link or
at one of the two loops, the Newton solver can be used to continue that branch with great accuracy.

In order to set up our discretization, we approximate the Laplacian operator $\Delta$
using a second-order symmetric finite difference
stencil with $N$ uniformly space grid points of size $h = 2 \pi/N$ on each
of the two loops of length $2 \pi$ and $M$ grid points on the interior
section of length $2L$.   In order to allow an approximation of the $\mathcal{O}(h^2)$ order,
we first ensure that the discretized Laplacian operator is symmetric by taking $L = m \pi$
for $m \in \NN$ or $1/m \in \NN$ and choose $N$ such that $M = N L/\pi \in \NN$.
To enforce the Kirchhoff boundary conditions \eqref{bc-1}, we take the higher order difference calculations
\begin{eqnarray*}
u_0'(-L) & = & \frac{-u_0(-L+2h)+4u_0(-L+h)-3u_0(-L)}{2h},  \\
u_-'(-L) & = & \frac{u_-(-L-2h)-4u_-(-L-h)+3u_-(-L)}{2h}, \\
u_-'(-L-2 \pi) & = & \frac{-u_-(-L-2\pi+2h)+4u_-(-L-2\pi+h)-3u_-(-L)}{2h},
\end{eqnarray*}
which allows us to replace $u_0 (-L) = u_- (-L) = u_- (-L-2 \pi)$ everywhere
it appears in the symmetric difference for the Laplacian.  There is a
symmetric argument for the other Kirchhoff boundary condition in \eqref{bc-2}.

\subsection{Numerical Findings}

Using the graph Laplacian approximated to the second order and various methods for
constructing solutions of the stationary NLS equation (\ref{statNLS}) for a given $\Lambda < 0$,
we attempt to verify various properties the ground state branches discussed in Theorems
\ref{theorem-graph} and \ref{theorem-limit}.    We present the results of our
various numerical studies in Figures \ref{stat2}, \ref{stat3}, \ref{statcomp}, \ref{stat8},
\ref{stat9}, and \ref{stat10}.   The code for computing these are made
publicly available at \url{www.unc.edu/~marzuola/mp_graph_code/}.

In Figure \ref{stat2}, we plot the form of the ground state computed using
Petviashvili's method with symmetric initial guess localized in the center link
for $L = \pi/2$ and a variety of $\Lambda$ values.  For values of $\Lambda = -0.01, -0.1, -1.5, -10.0$,
we observe the computed ground state go from the constant solution (\ref{ground-1}) as in Lemma \ref{lemma-1},
to the positive asymmetric wave as in Lemma \ref{lemma-3}, then in an intermediate region
to the positive asymmetric wave as in Lemma \ref{lemma-soliton-ring},
finally, settling on the positive symmetric wave as in  Lemma \ref{remark-soliton-line}.

\begin{figure}[htp]
\begin{tabular}{cc}
\includegraphics[width=5.25cm]{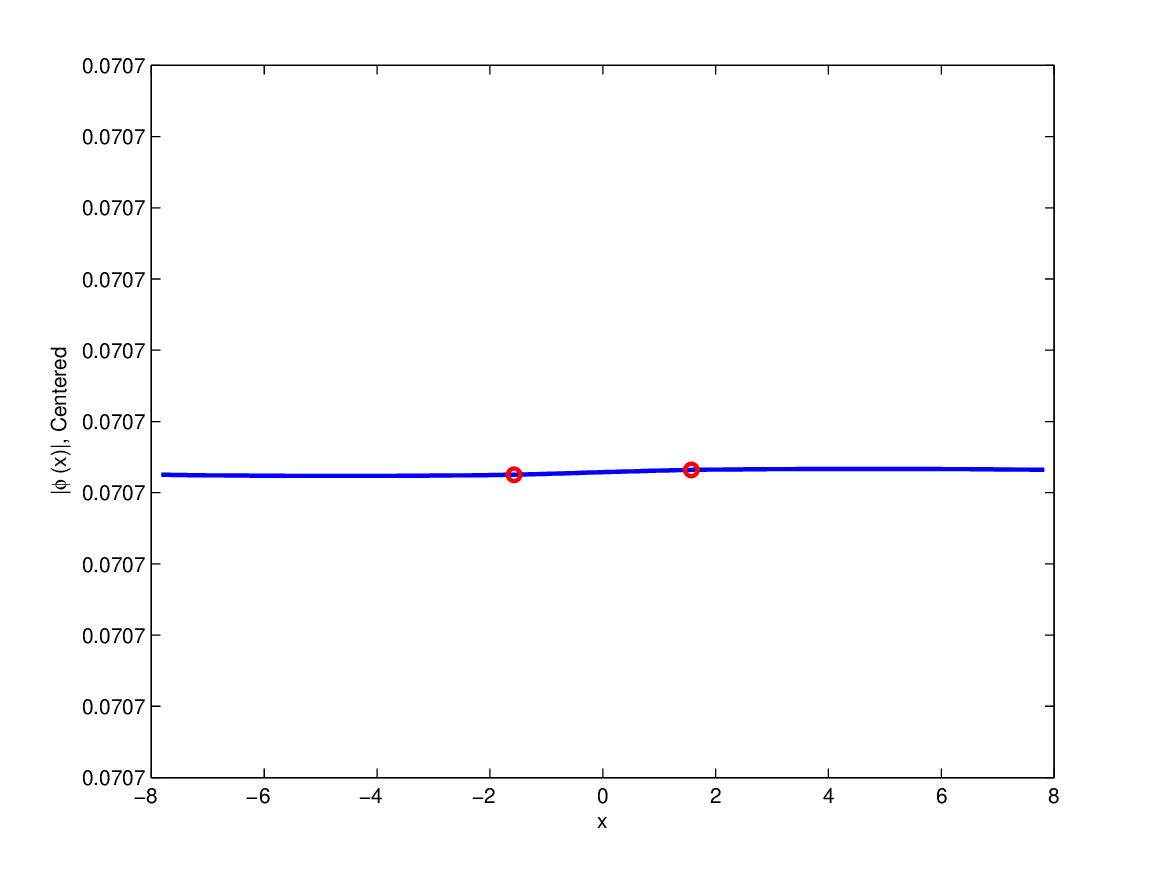} &
\includegraphics[width=5.25cm]{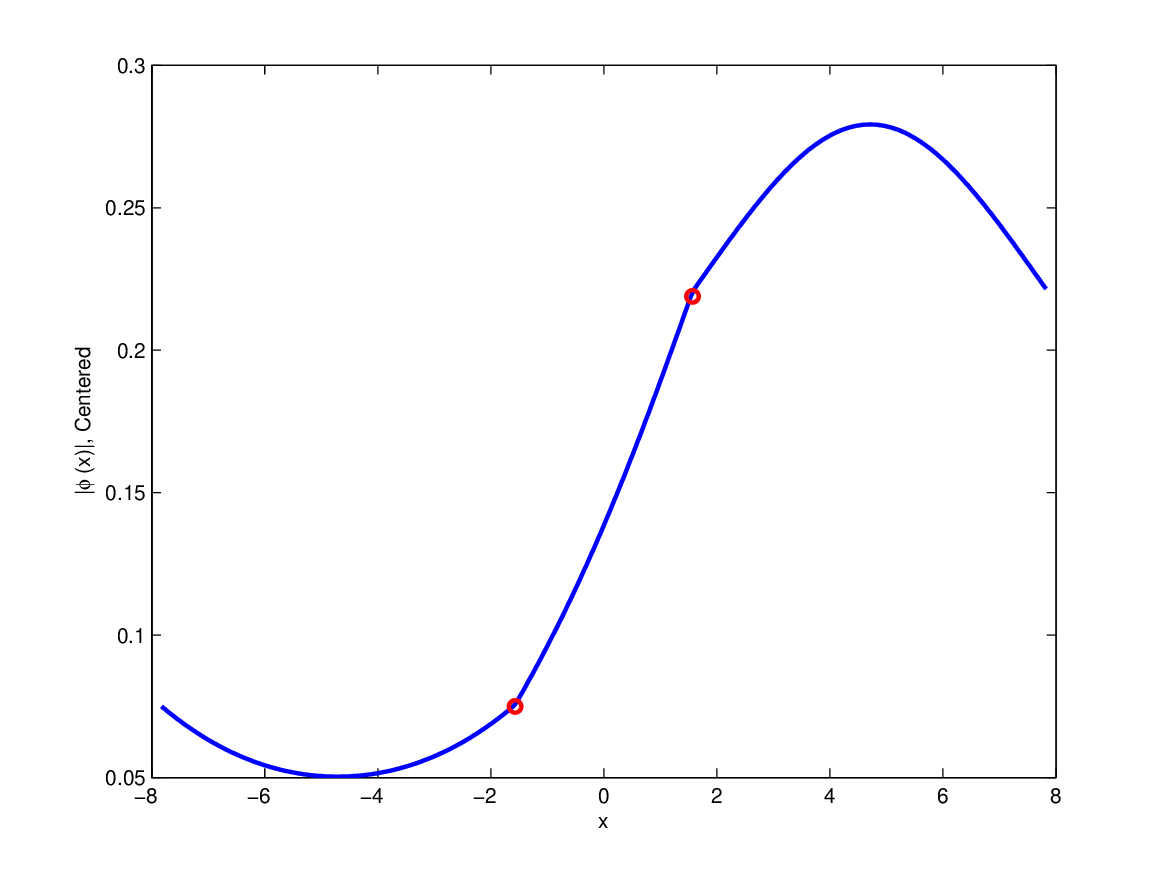} \\
\includegraphics[width=5.25cm]{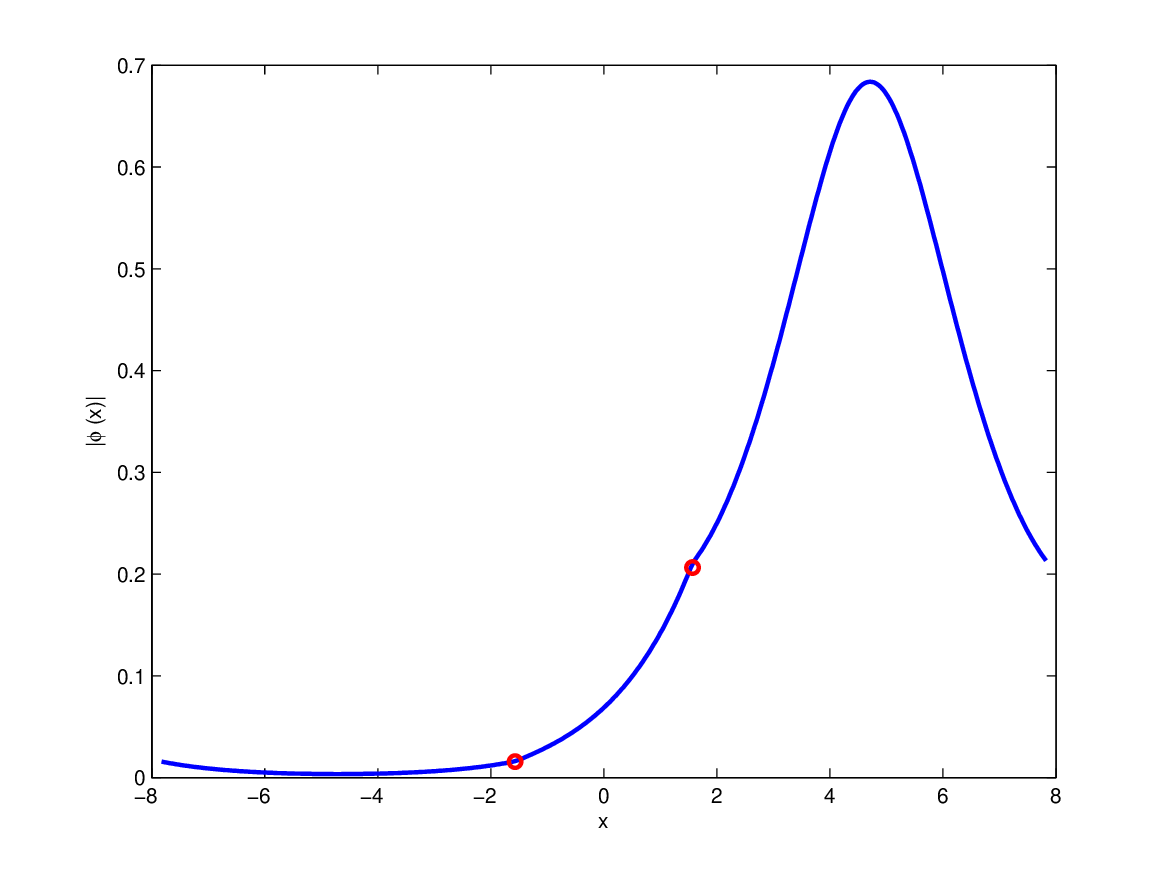} &
\includegraphics[width=5.25cm]{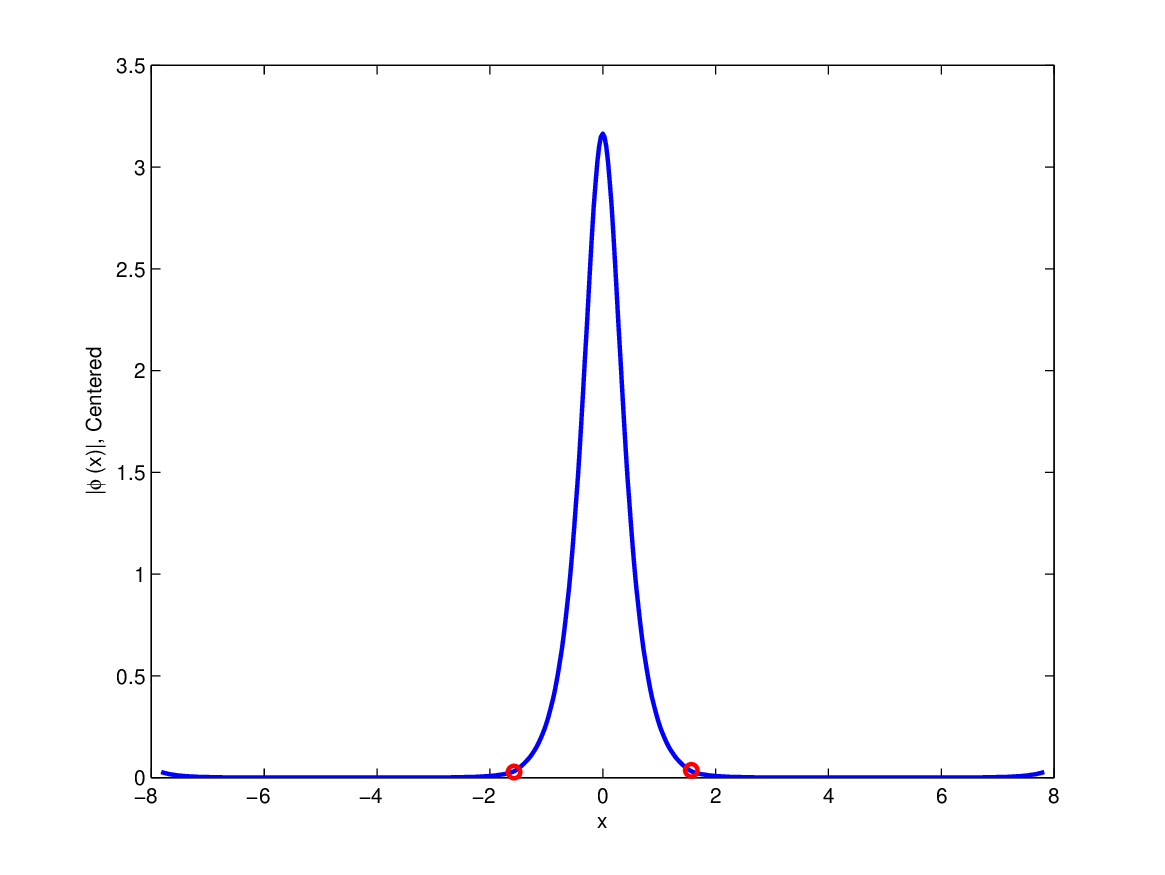} \\
\end{tabular}
\caption{Solutions of the stationary NLS equation (\ref{statNLS}) for $L = \pi/2$,
when the initial iterate is localized on the central link, for $\Lambda = -0.01$ (top left),
$\Lambda = -0.1$ (top right), $\Lambda = -1.5$ (bottom left), and $\Lambda = -10.0$ (bottom right).
The values at $\pm L$ are marked with a red circle. }
\label{stat2}
\end{figure}

\begin{figure}[htp]
\begin{tabular}{cc}
\includegraphics[width=5.25cm]{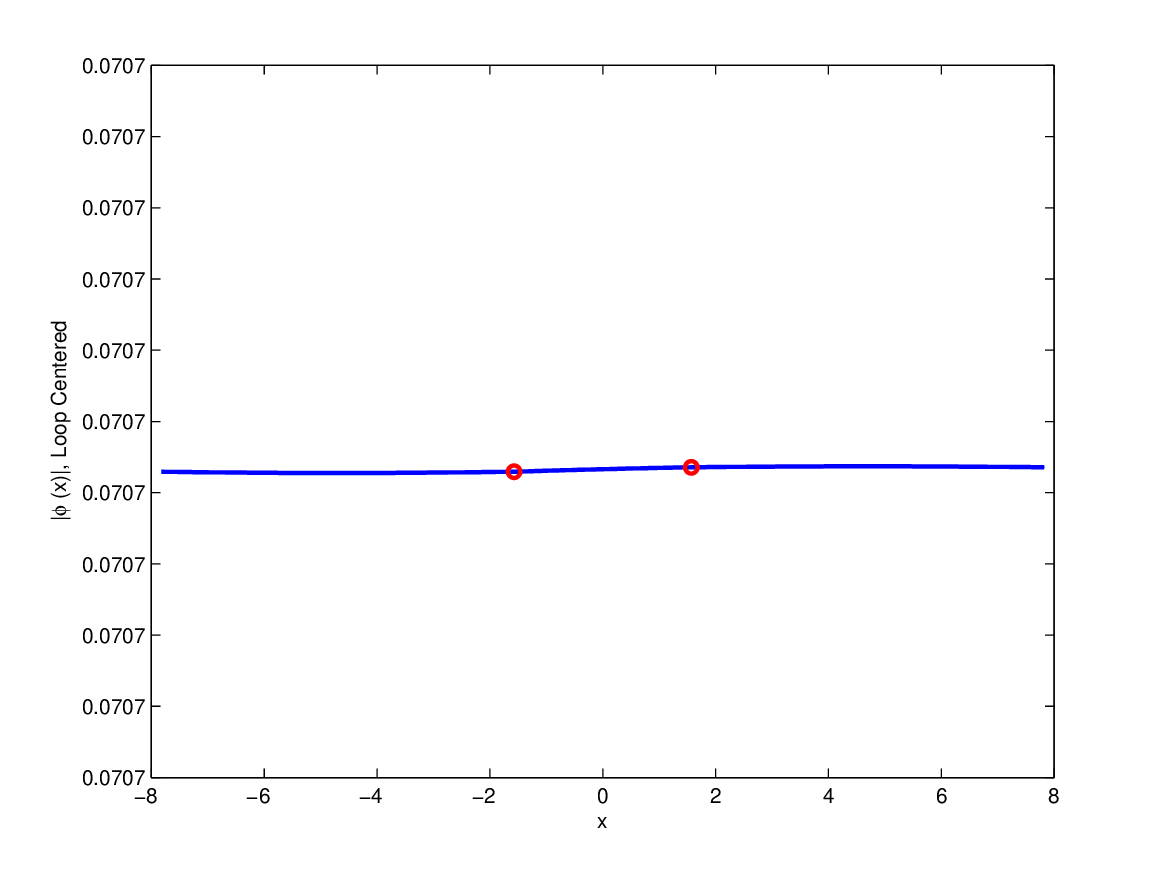} &
\includegraphics[width=5.25cm]{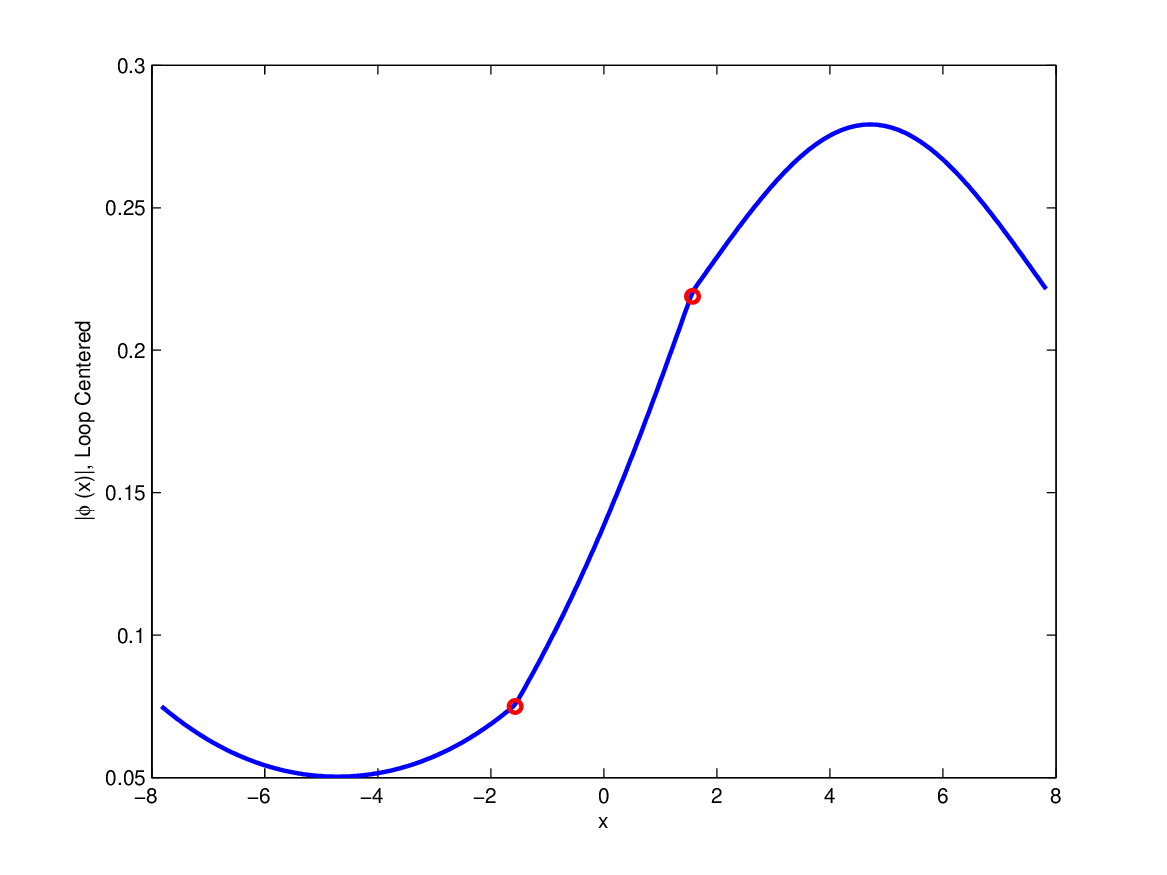} \\
\includegraphics[width=5.25cm]{barbell_nls_loopcent_omega_0pt5_LeqPi_over_2.eps} &
\includegraphics[width=5.25cm]{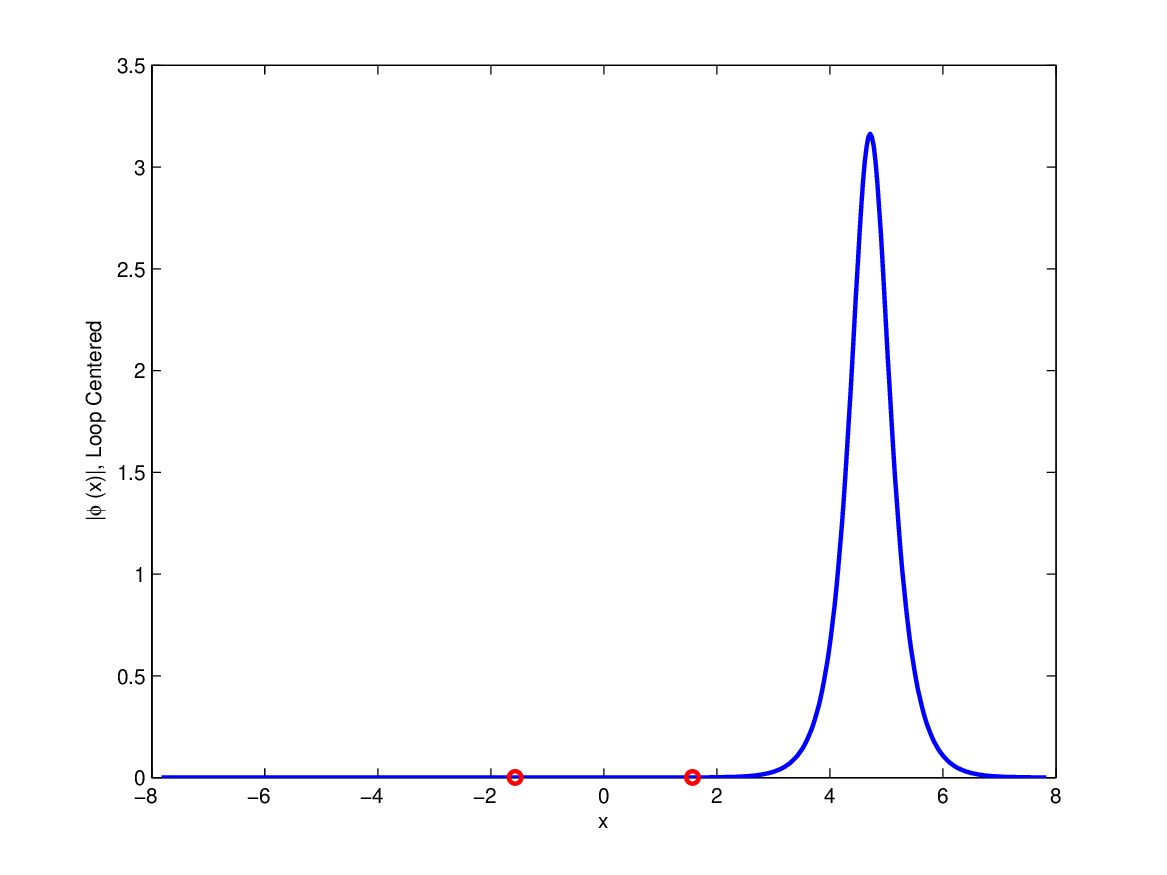} \\
\end{tabular}
\caption{The same as in Figure \ref{stat2} but for the initial iterate being localized on the right ring.}
\label{stat3}
\end{figure}

\begin{figure}[htp]
\begin{tabular}{cc}
\includegraphics[width=5.25cm]{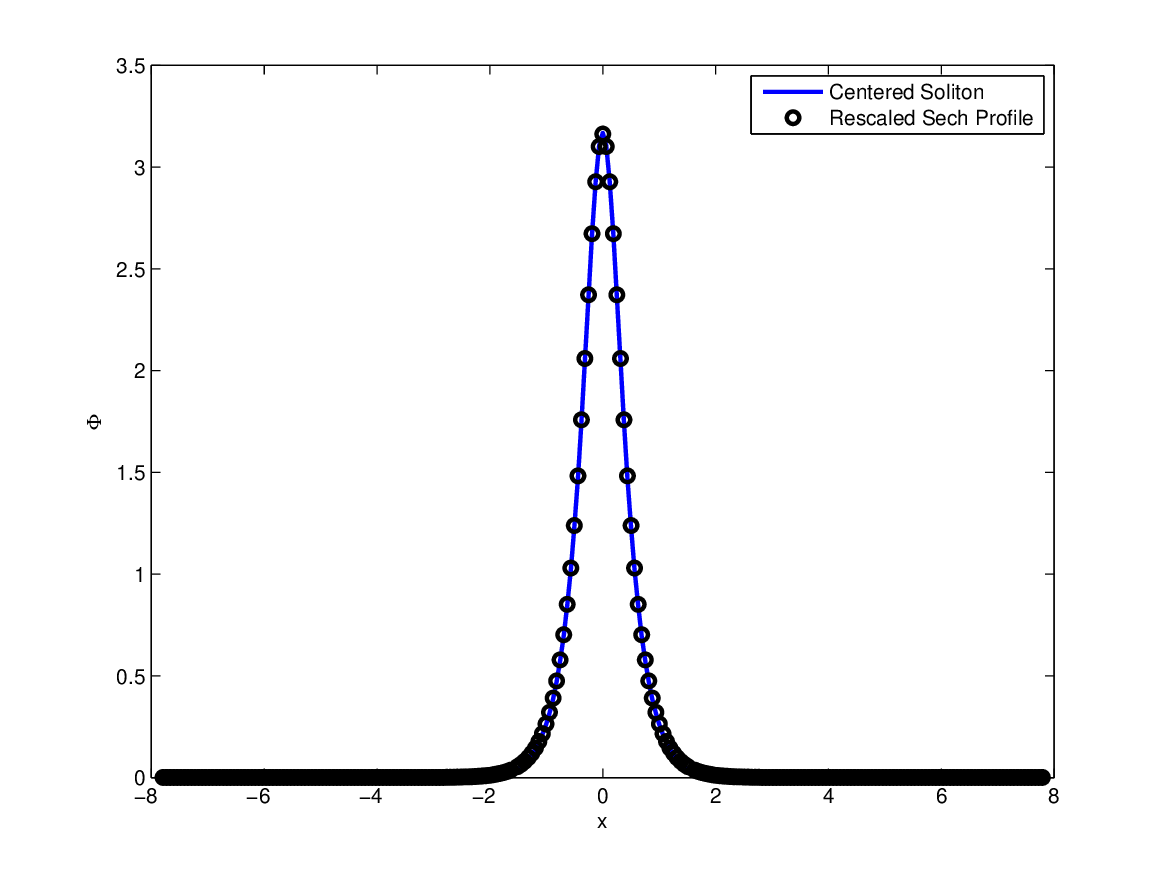} &
\includegraphics[width=5.25cm]{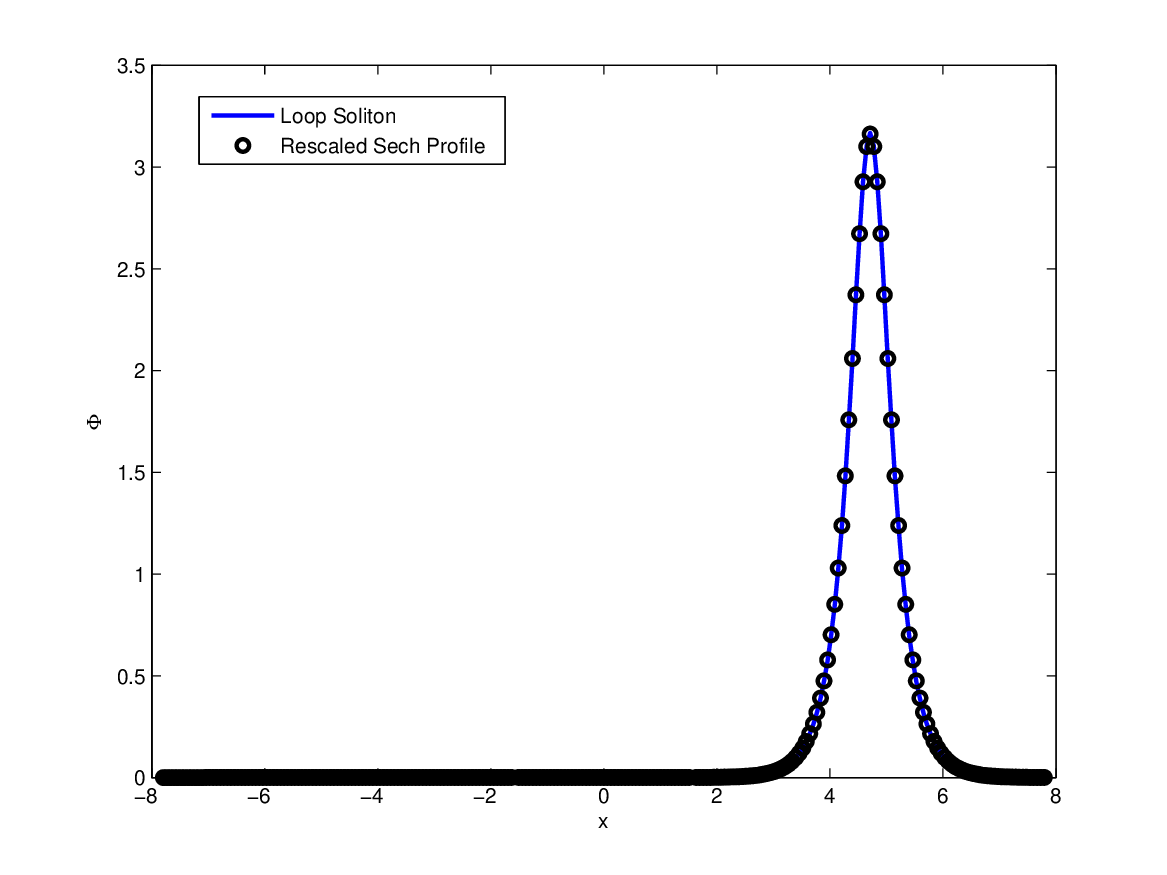} \\
\includegraphics[width=5.25cm]{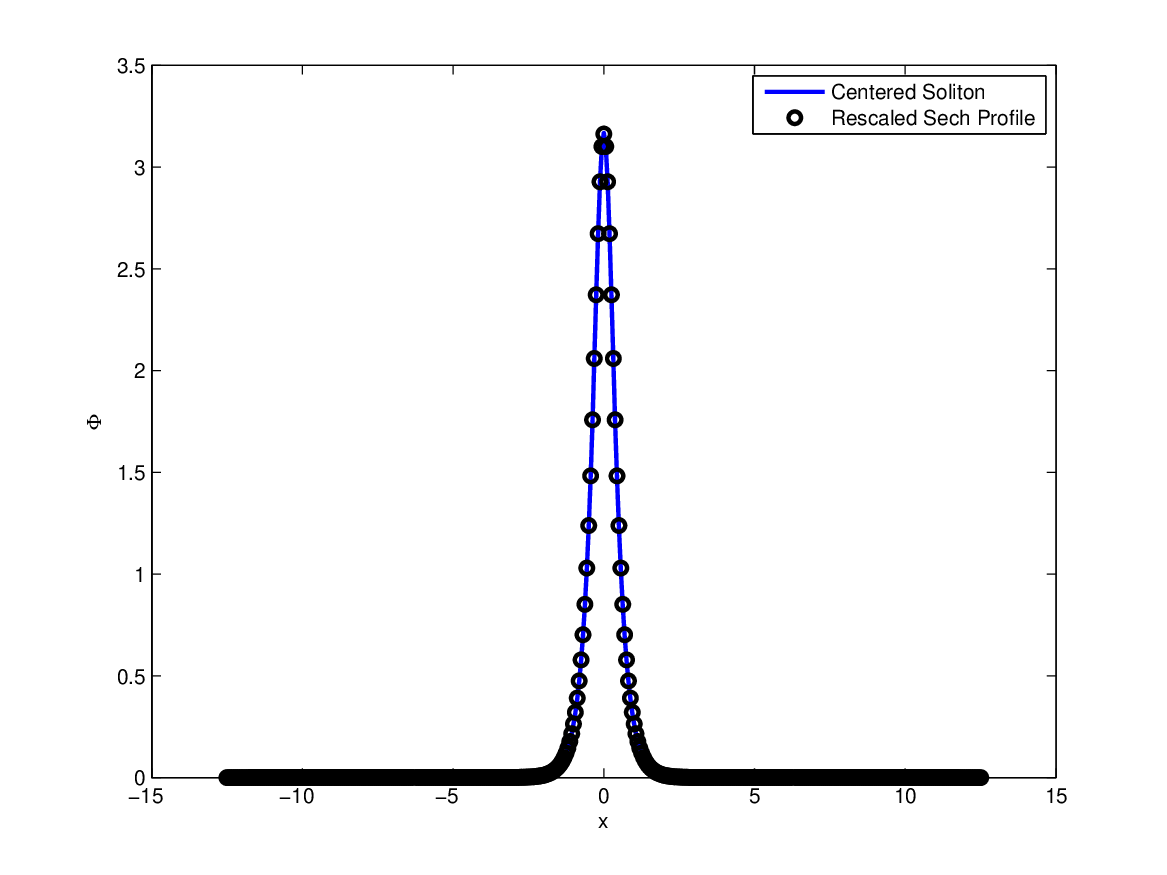} &
\includegraphics[width=5.25cm]{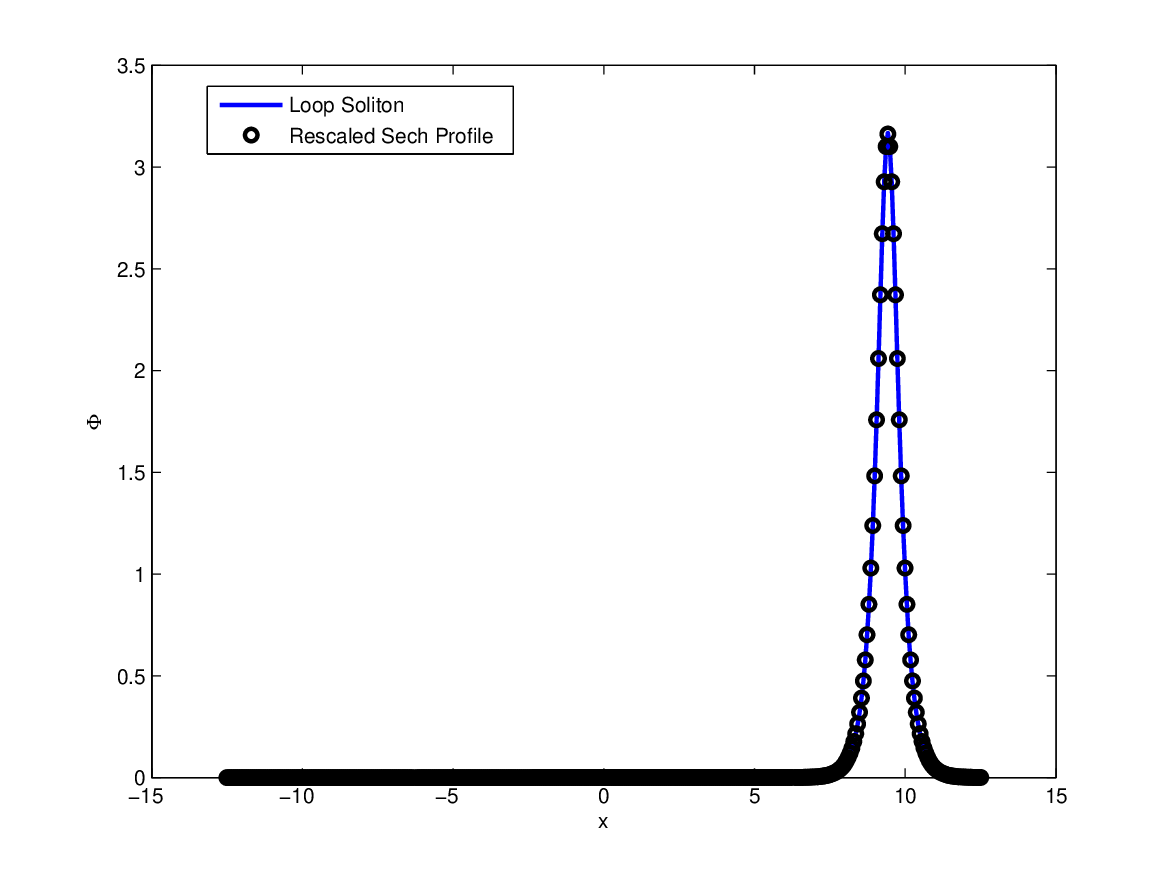} \\
\end{tabular}
\caption{Comparison of the standing waves (solid line) localized
in the central link (left) and in one of the rings (right) to the rescaled solitary wave
profile (dots) for $L = \pi/2$ (top) and $L = 2 \pi$ (bottom) for $\Lambda = -10.0$.}
\label{statcomp}
\end{figure}

\begin{figure}[htp]
\begin{tabular}{cc}
\includegraphics[width=7cm]{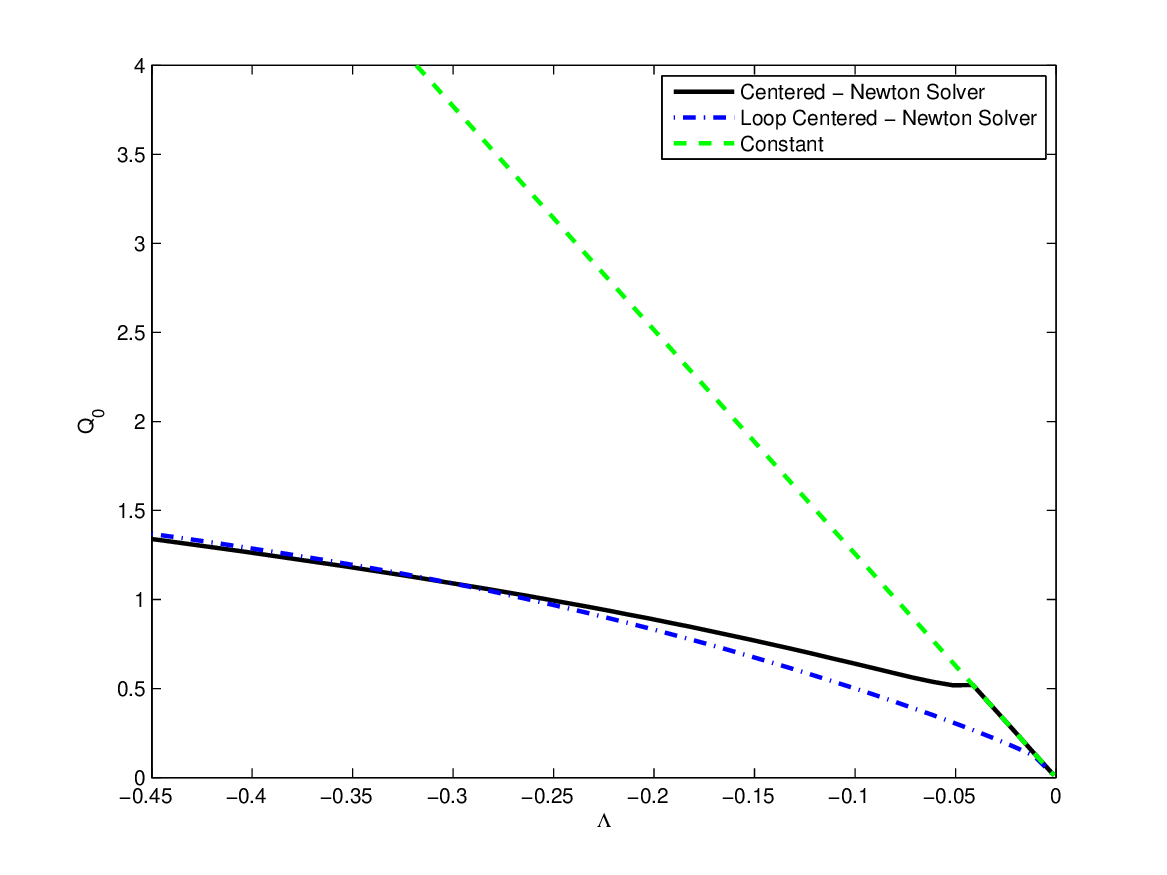} &
 \includegraphics[width=7cm]{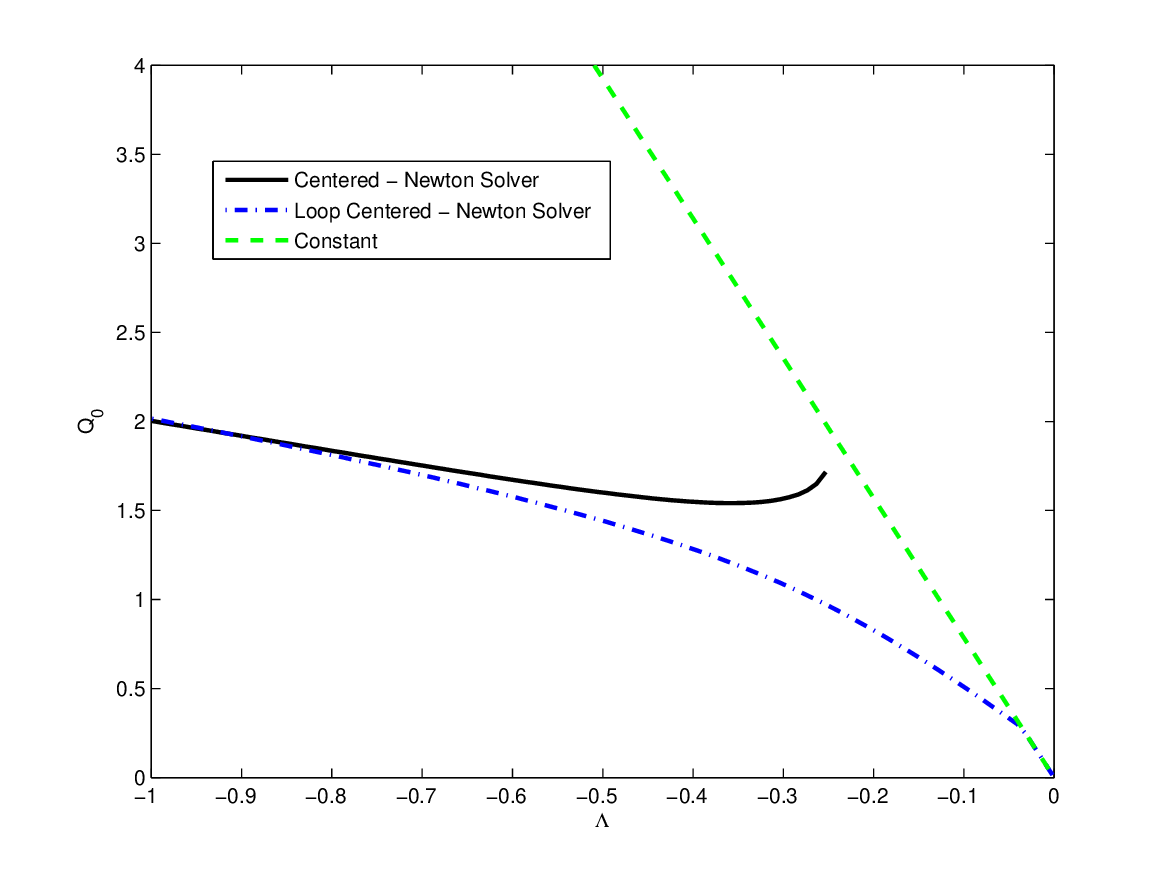}
\end{tabular}
\caption{The bifurcation diagram for two positive solutions localized in the central link and in one of the two rings
as well as for the constant solution for $L = 2 \pi$ (left) and $L =  \pi/2$ (right).}
\label{stat8}
\end{figure}

\begin{figure}[htp]
\includegraphics[width=7cm]{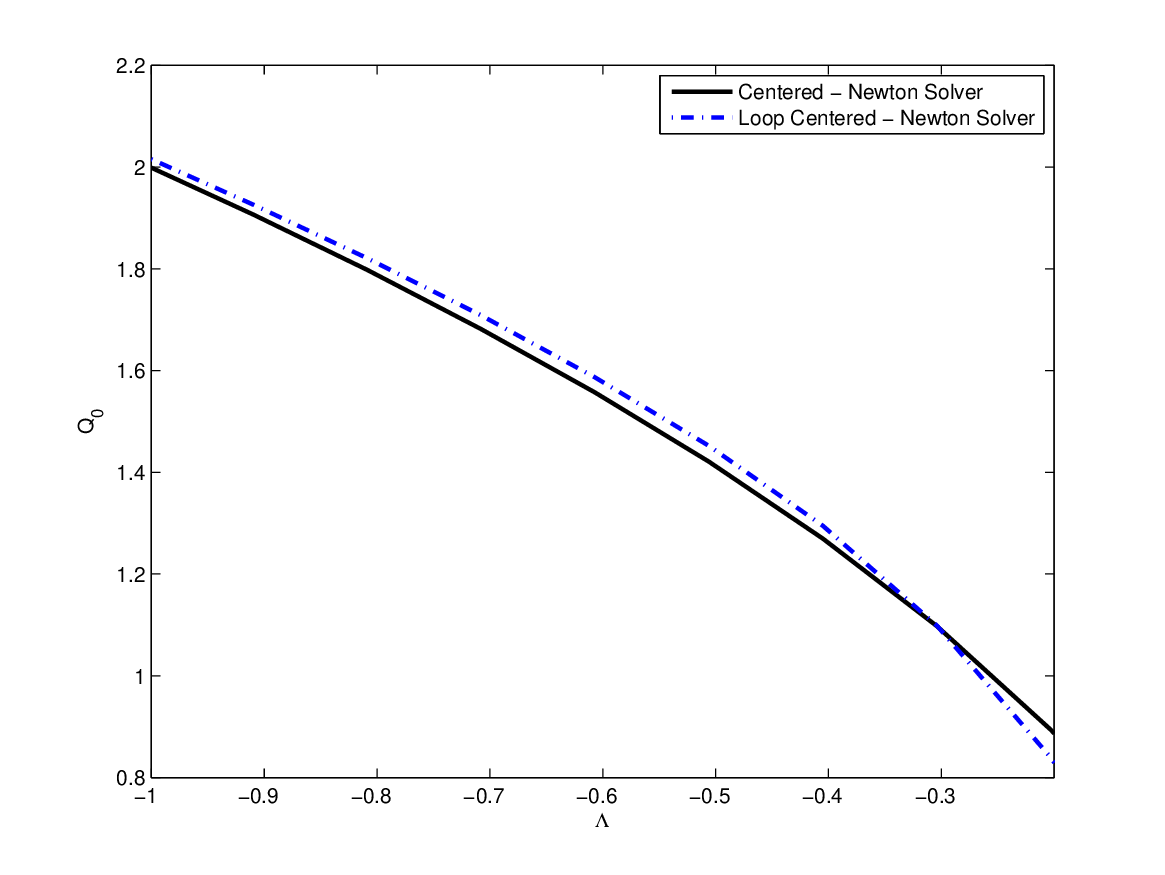}
 \includegraphics[width=7cm]{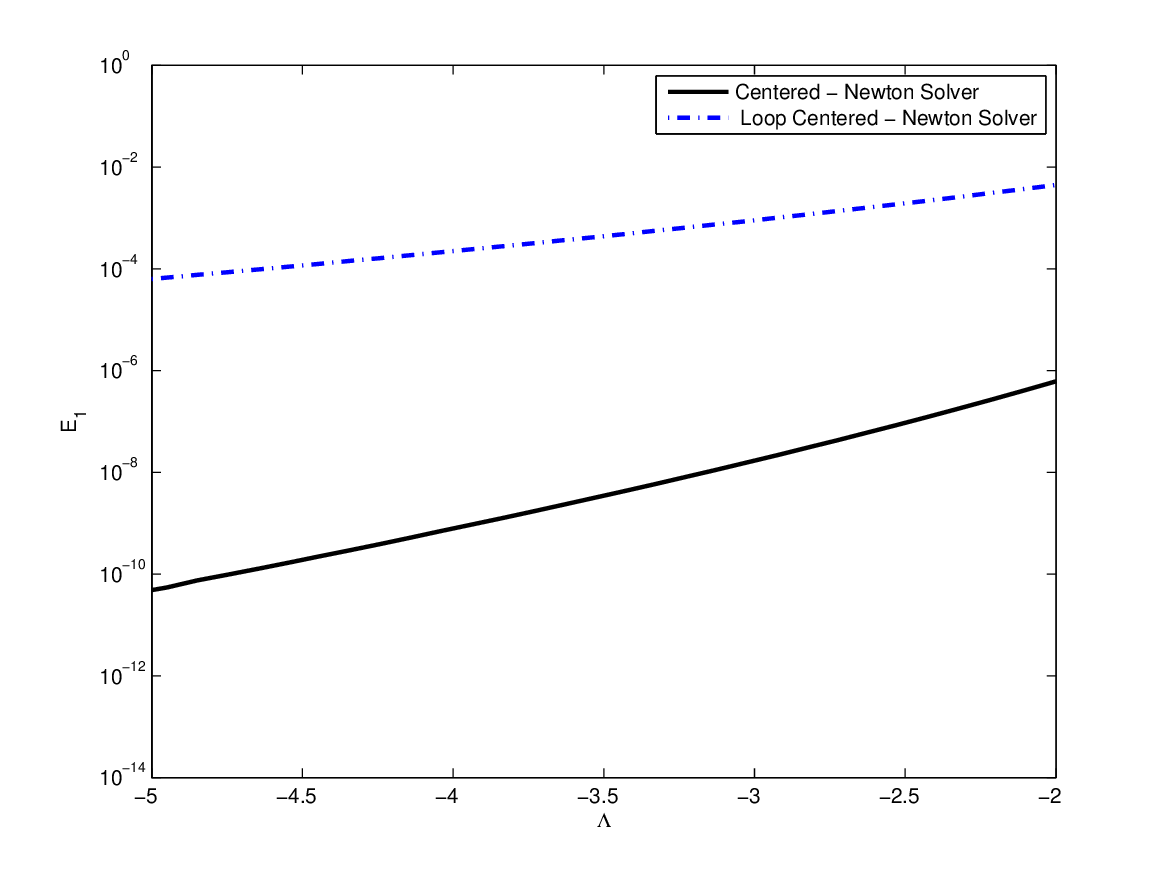}  \\
\caption{Left: comparison of the bifurcation diagram for positive solutions localized in the central link and in one of the two rings
for $L = 2 \pi$. Right:  the second eigenvalue of the linearization operator $L_+$ associated with the two positive
solutions plotted with respect to $\Lambda$ in a semilogy plot. }
\label{stat9}
\end{figure}

\begin{figure}[htp]
%\begin{tabular}{cc}
\includegraphics[width=7cm]{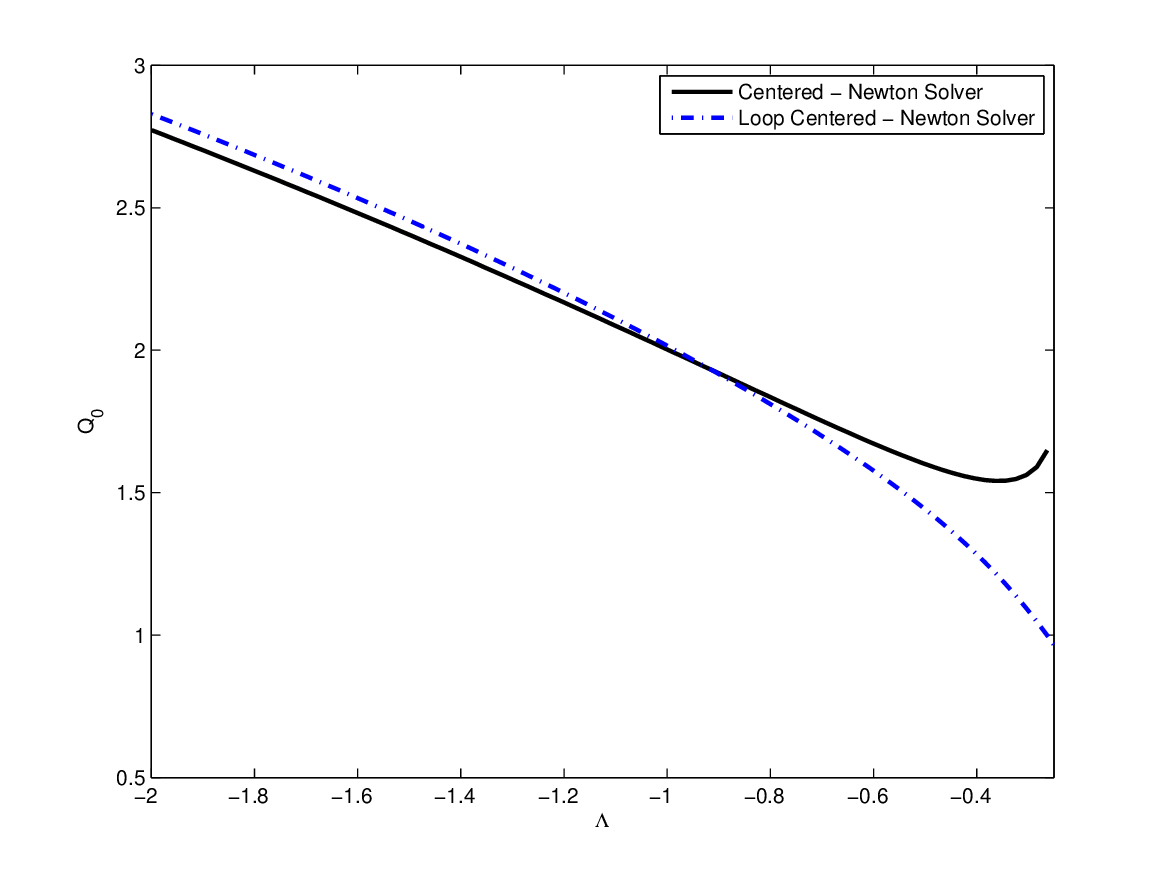}
 \includegraphics[width=7cm]{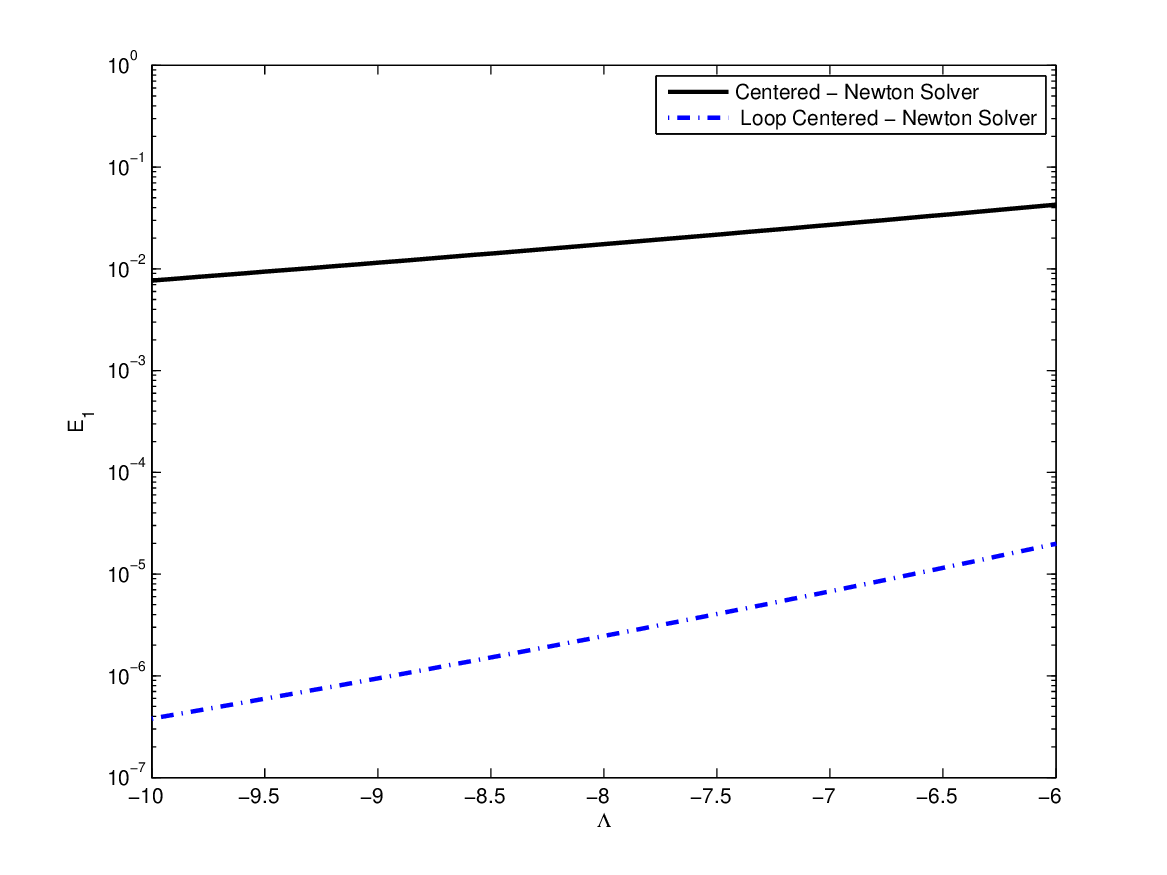}
%\end{tabular}
\caption{The same as Figure \ref{stat9} but for $L =  \pi/2$.}
\label{stat10}
\end{figure}

Figure \ref{stat3} plots the form of the stationary solution
computed using Petviashvili's method with a loop centered initial iterate
for $L = \pi/2$ and $\Lambda$ values $ -0.01, -0.1, -1.5, -10.0$.
Compared to the outcome on Figure \ref{stat2}, we observe
that the positive asymmetric wave remains to exist at least up to $\Lambda = -10$.
Both positive waves (\ref{ground-2}) and (\ref{ground-3}) coexist for large
negative $\Lambda$ as in Theorem \ref{theorem-limit}.

Figure \ref{statcomp} plots standing waves for $\Lambda = -10$ localized in the central link and in one of the two rings
and compares them to the appropriately rescaled solitary wave (\ref{NLS-soliton}).
The agreement illustrate the representations (\ref{ground-2}) and (\ref{ground-3}) in Theorem \ref{theorem-limit}.

Figure \ref{stat8} shows the evolution of the charge $Q_0$ with respect to $\Lambda$ for
the constant solution and the two positive waves continuing from the constant state
for $L = 2 \pi$ (left) and $L = \pi/2$ (right).  We compute these branches by
using both Petviashvili's method for sufficiently large $|\Lambda|$, then continuing
with Newton's method towards the constant branch. The bifurcation
of the positive asymmetric wave from the constant state is the pitchfork bifurcation as
shown in Lemma \ref{lemma-3}. 

In Figures \ref{stat9} and \ref{stat10}, we numerically continue to large values of $|\Lambda|$
and explore two properties discussed in Lemmas \ref{lemma-limit} and \ref{lemma-eigenvalue} for
$L = 2 \pi$ and $L = \pi/2$ respectively. The left panels show the bifurcation diagram zoomed for an interval of large $|\Lambda|$.
The two positive waves are computed via Newton iteration from peaked states at $\Lambda = 10.0$
concentrated in either the loop or the center link.  In agreement with Lemma \ref{lemma-limit},
we can see that the positive symmetric wave has the smaller value of $Q_0$ for a fixed $\Lambda$.
We also observe that the values of $Q_0$ are much closer for large $L$ than for small $L$. Although only a short interval of
$\Lambda$ values is shown, the same trend continues all way to $\Lambda = -10$.

The right panels of Figures \ref{stat9} and \ref{stat10} show the evolution of the second eigenvalue of the linearized operator $L_+$
for both the positive waves. In agreement with Lemma \ref{lemma-eigenvalue}, the positivity of the second eigenvalue of $L_+$ for
the positive symmetric wave represented by \eqref{ground-3} is observed to be quite robust
for both $L = 2 \pi$ and $L = \pi/2$.  However, we also observe that the second eigenvalue of $L_+$
for the positive asymmetric wave represented by \eqref{ground-2} is also positive,
which verifies that the positive asymmetric and symmetric waves are both local constrained minimizers of 
the energy at a fixed charge.

\appendix

\section{Proof of Proposition \ref{prop-elliptic}}

Proposition \ref{prop-elliptic} generalizes formal asymptotic expansions given by formulas 16.15 in \cite{AS}:
\begin{align}
\label{sn-der-appendix}
& {\rm sn}(\xi;k)  =  \tanh(\xi) + \frac{1}{4} (1-k^2) \left[ \sinh(\xi) \cosh(\xi) - \xi \right] {\rm sech}^2(\xi) + \mathcal{O}((1-k^2)^2), \\
\label{cn-der-appendix}
& {\rm cn}(\xi;k)  =  {\rm sech}(\xi) - \frac{1}{4} (1-k^2) \left[ \sinh(\xi) \cosh(\xi) - \xi \right] \tanh(\xi) {\rm sech}(\xi) + \mathcal{O}((1-k^2)^2), \\
\label{dn-der-appendix}
& {\rm dn}(\xi;k)  =  {\rm sech}(\xi) + \frac{1}{4} (1-k^2) \left[ \sinh(\xi) \cosh(\xi) + \xi \right] \tanh(\xi) {\rm sech}(\xi) + \mathcal{O}((1-k^2)^2),
\end{align}
where the expansion is understood in the sense of the power series in $(1-k^2)$ as $k \to 1$ uniformly in $\xi$.
In the same sense, formulas (\ref{sn-der}), (\ref{cn-der}), and (\ref{dn-der}) follow from expansions
(\ref{sn-der-appendix}), (\ref{cn-der-appendix}), and (\ref{dn-der-appendix}). Here we give a rigorous proof
of these asymptotical representations, as well as the bound on the first derivative given by (\ref{bound-second-derivative}).
We only give the proof for the Jacobi elliptic function ${\rm dn}$. The proof for other Jacobi elliptic functions
is similar.

First, let us recall the basic identities for the Jacobi elliptic functions
\begin{equation}
\label{elliptic-identities}
{\rm dn}^2(\xi;k) + k^2 {\rm sn}^2(\xi;k) = 1, \quad {\rm cn}^2(\xi;k) + {\rm sn}^2(\xi;k) = 1,
\end{equation}
from which it follows that ${\rm dn}(\xi;k)$ is monotonically decreasing from ${\rm dn}(0;k) = 1$
to ${\rm dn}(K(k);k) = \sqrt{1 - k^2}$, when $\xi$ changes from $0$ to $K(k)$, where
$K(k)$ is the complete elliptic integral. Also recall that ${\rm dn}(\xi;k)$ is an
even, $2K(k)$-periodic function of $\xi$ for every $k \in (0,1)$.

From the integral representation 8.144 in \cite{Grad}, for every $k \in (0,1)$ and every $\xi \in (0,K(k))$, we have
\begin{equation}
\label{dn-integral}
\xi = \int_{{\rm dn}(\xi;k)}^1 \frac{dt}{\sqrt{1-t^2} \sqrt{t^2 - 1 + k^2}}.
\end{equation}
Let $u_0(\xi) := {\rm dn}(\xi;1)$. Using the parametrization $t = {\rm sech}(x)$, we obtain
from (\ref{dn-integral}) for every $\xi \in \mathbb{R}^+$:
$$
\xi = \int_{u_0(\xi)}^1 \frac{dt}{t \sqrt{1-t^2}} = \int_0^{x_0} dx = x_0,
$$
where $x_0$ is a positive root of ${\rm sech}(x) = u_0$. Therefore, $u_0(\xi) = {\rm sech}(x_0) = {\rm sech}(\xi)$ as in
the first formula of (\ref{dn-der}).

Differentiating (\ref{dn-integral}) in $k$ and using the identities (\ref{elliptic-identities}),
we obtain
\begin{equation}
\label{dn-der-integral}
\partial_k {\rm dn}(\xi;k) = -k^3 {\rm sn}(\xi;k) {\rm cn}(\xi;k)
\int_{{\rm dn}(\xi;k)}^1 \frac{dt}{\sqrt{1-t^2} \sqrt{(t^2 - 1 + k^2)^3}}.
\end{equation}
Let $v_0(\xi) := \partial_k {\rm dn}(\xi;1)$. Using the first formulas in
(\ref{sn-der}) and (\ref{cn-der}), as well as the same parametrization $t = {\rm sech}(x)$,
we obtain
\begin{align}
\nonumber
v_0(\xi) & =  -\tanh(\xi) {\rm sech}(\xi) \int_{u_0(\xi)}^1 \frac{dt}{t^3 \sqrt{1-t^2}} \\
\nonumber
& =  - \tanh(\xi) {\rm sech}(\xi) \int_0^{\xi} \cosh^2(x) dx \\
\label{dn-der-explicit}
& =  - \frac{1}{2} \tanh(\xi) {\rm sech}(\xi) \left[ \sinh(\xi) \cosh(\xi) + \xi \right],
\end{align}
which justifies the second formula in (\ref{dn-der}).

It remains to justify the bound (\ref{bound-second-derivative}) on the first derivative of
$u (\xi;k) := {\rm dn}(\xi;k)$ in $k$ for $\xi \in (0,\pi\mu)$ and $k \in (k_*(\mu),1)$,
where $\mu$ is sufficiently large. We recall that $u(\cdot;k) $ satisfies the second-order
differential equation for every $k \in (0,1)$:
\begin{equation}
- \frac{d^2 u}{d \xi^2} + (2-k^2) u - 2 u^3 = 0, \quad \xi \in (-K(k),K(k)).
\end{equation}

Let us introduce the linearization operator
\begin{equation}
\label{linearization-dnoidal}
L_k := -\frac{d^2}{d \xi^2} + 2 - k^2 - 6 u^2(\xi;k) : H^2_{\rm per}(-K(k),K(k)) \to L^2_{\rm per}(-K(k),K(k)).
\end{equation}
As $k \to 1$, the linearization operator $L_k$ defined by (\ref{linearization-dnoidal}) converges
in some sense to the limiting linearization operator
\begin{equation}
\label{linearization-dnoidal-limit}
L_{k=1} := -\frac{d^2}{d \xi^2} + 1 - 6 {\rm sech}^2(\xi) : H^2(\mathbb{R}) \to L^2(\mathbb{R}),
\end{equation}
which has a negative eigenvalue at $-3$ associated with the even eigenfunction $u_0^2$,
the zero eigenvalue associated with the odd eigenfunctions $u_0'$, and the essential spectrum
at $[1,\infty)$, where $u_0(\xi) = {\rm dn}(\xi;1) = {\rm sech}(\xi)$.

For convenience, we drop the first argument $\xi$ in the definition of $u(\xi;k) \equiv u(k)$,
so that we can denote the partial derivative of $u$ with respect to $k$ by a prime. Then,
we note that
\begin{equation}
\label{linearization-1}
L_k u'(k) = 2 k u(k).
\end{equation}
Since $\partial_\xi u = - k^2 {\rm sn} (\xi;k) {\rm cn} (\xi;k)$ solves $L_k \partial_\xi u = 0$,
we obtain the unique $2K(k)$-periodic and even solution of the differential equation (\ref{linearization-1})
by variation of a constant:
\begin{equation}
\label{derivative-dnoidal-cnoidal}
u'(k) = - k {\rm sn}(\xi;k) {\rm cn}(\xi;k) \int_0^{\xi} \frac{d \xi'}{{\rm cn}^2(\xi';k)}.
\end{equation}
This representation is complementary to (\ref{dn-der-integral}) and it admits the same
expression as in (\ref{dn-der-explicit}) if the limiting values of the Jacobi elliptic functions
in (\ref{sn-der-appendix}), (\ref{cn-der-appendix}), and (\ref{dn-der-appendix}) are used.

Now we note that although the inhomogeneous equation (\ref{linearization-1}) can be uniquely solved
in the limit $k \to 1$ in $H^2(\mathbb{R})$ because $u_0$ is orthogonal to ${\rm ker}(L_{k=1}) = {\rm span}(u_0')$,
the limiting expression for $u'(1)$ contains an exponentially growing function of $\xi$ as per the explicit expression
(\ref{dn-der-explicit}). This is because the homogeneous equation $L_{k=1} v = 0$ admits an even
exponentially growing solution $v(\xi) = \cosh(\xi) + \tilde{v}(\xi)$, where $\tilde{v} \in H^2(\mathbb{R})$.

Using further differentiation of (\ref{linearization-1}) in $k$, we obtain a chain of linear inhomogeneous equations
\begin{align*}
L_k u''(k) & =  4 k u'(k) + 2 u(k) + 12 u(k) (u'(k))^2, \\
L_k u'''(k) & = 6 k u''(k) + 6 u'(k) + 36 u(k) u'(k) u''(k) + 12 (u'(k))^3, \\
L_k u''''(k) & =  8 k u'''(k) + 12 u''(k) + 48 u(k) u'(k) u'''(k) + 72 (u'(k))^2 u''(k) + 36 u(k) (u''(k))^2.
\end{align*}
Inspecting the right-hand sides of these linear inhomogeneous equations in the limit $k \to 1$
and inverting $L_{k=1}$ on the even right-hand sides, we obtain that all derivatives
of $u$ in $k$ are exponential growing functions of $\xi$ with the following growth rates:
\begin{equation*}
u''(1) = \mathcal{O}(\xi \sinh(\xi)), \quad u'''(1) = \mathcal{O}(\cosh(3\xi)), \quad
u''''(1) = \mathcal{O}(\xi \sinh(3 \xi)), \quad \mbox{\rm as} \quad \xi \to \pm \infty.
\end{equation*}
It follows by induction (the proof is omitted) that for every $k \in \mathbb{N}$, we have
\begin{equation}
u^{(2k+1)}(1) = \mathcal{O}( \cosh((2k+1) \xi)), \quad
u^{(2k+2)}(1) = \mathcal{O}(\xi \sinh((2k+1) \xi)), \quad \mbox{\rm as} \quad \xi \to \pm \infty,
\end{equation}
where the implicit constants grow polynomially in $k$. The $N$-th partial sum of the Taylor series
$$
S_N := \sum_{n=1}^{N} \frac{1}{n!} (k-1)^n u^{(n+1)}(1)
$$
converges for every $\xi \in (-\pi \mu,\pi\mu)$ and $k \in (k_*(\mu),1)$,
where $k_*(\mu) = 1 - 8 e^{-2\pi \mu} + \mathcal{O}(e^{-4 \pi \mu})$ as $\mu \to \infty$.
Therefore, $u'(k) - u'(1) = \lim_{N \to \infty} S_N$ is well-defined by the majorant power series
in the corresponding domain. Moreover, if $\mu$ is sufficiently large, then
$$
|u'(k) - u'(1)| \leq C |k-1| (1 + |\xi|) \cosh(\xi), \quad \xi \in (-\pi \mu,\pi \mu), \quad
k \in (k_*(\mu),1),
$$
where $C$ is a positive $\mu$-independent constant $C$. This bound is equivalent
to the third bound in (\ref{bound-second-derivative}).

\vspace{0.25cm}

{\bf Acknowledgements:} J.L.M. was supported in part by U.S. NSF DMS-1312874 and NSF CAREER Grant DMS-1352353
and is grateful to the Schr\"odinger Institute in Vienna and the Mathematical Sciences Research Institute
for hosting him during part of the completion of this work. The work of D.P. is supported by the Ministry of Education
and Science of Russian Federation (the base part of the state task No. 2014/133, project No. 2839).


\begin{thebibliography}{100}

\bibitem{AS} M. Abramowitz and I.A. Stegun, {\em Handbook of Mathematical Functions with Formulas,
Graphs, and Mathematical Tables} (Dover, New York, 1965).

\bibitem{ACFN0} R. Adami, C. Cacciapuoti, D. Finco, and D. Noja, Fast solitons on star graphs,
\newblock{\em Rev. Math. Phys.} {\bf 23} (2011) 409--451.

\bibitem{ACFN2} R.~Adami, C.~Cacciapuoti, D.~Finco, and D.~Noja, Variational properties and orbital stability of standing waves
for NLS equation on a star graph, \newblock{\em J. Differential Equations} \textbf{257} (2014) 3738--3777.

\bibitem{AST} R. Adami, E. Serra, and P. Tilli, NLS ground states on graphs,
{\em Calc.Var. \& PDE}, in print (2015), doi:10.1007/s00526-014-0804-z.

%\bibitem{ADKM} G.D. Akrivis, V. A. Dougalis, O. A. Karakashian, and W. R. McKinney,
%Numerical approximation of blow-up of radially symmetric solutions of the nonlinear Schr\"odinger equation,
%SIAM J. Sci. Comput. {\bf 25} (2003), no. 1, 186-212.

\bibitem{BK} G.~Berkolaiko and P.~Kuchment, \newblock{\em Introduction to quantum graphs}
Mathematical Surveys and Monographs {\bf 186} (AMS, Providence, 2013).

%\bibitem{Johnson} J.C. Bronski, M.A. Johnson, and T. Kapitula, ``An index theorem
%for the stability of periodic travelling waves of Korteweg-de Vries type",
%Proc. Roy. Soc. Edinburgh Sect. A  {\bf 141}  (2011),  1141--1173.

\bibitem{[BS]} E.~Bulgakov and A.~Sadreev,
\newblock{Symmetry breaking in {T}-shaped photonic waveguide coupled with two
  identical nonlinear cavities}
\newblock {\em Phys.Rev. B}, {\bf 84} (2011), 155304 (9 pages).

\bibitem{CFN} C.~Cacciapuoti, D.~Finco, and D.~Noja, Topology induced bifurcations
for the NLS on the tadpole graph, {\em Phys.Rev. E} {\bf 91} (2015), 013206.

\bibitem{Carr} L.D. Carr, Ch.W. Clark, and W.P. Reinhardt, ``Stationary solutions of the one-dimensional
nonlinear Schr\"{o}dinger equation. II. Case of attractive nonlinearity", Phys. Rev. A {\bf 62} (2000), 063611 (10 pages).

%\bibitem{Deconinck} B. Deconinck and M. Nivala, ``The stability analysis of the
%periodic traveling wave solutions of the mKdV equation",  Stud. Appl. Math.
%{\bf 126}  (2011),  17--48.

%\bibitem{Eastham} M.S.P. Eastham, {\em Spectral Theory of Periodic Differential Equations}
%(Scottish Academic Press, Edinburgh, 1973).

\bibitem{FHK} R. Fukuizumi, F.H. Selem, and H.Kikuchi, Stationary problem related
to the nonlinear Schr\"odinger equation on the unit ball, {\em Nonlinearity}, {\bf 25} (2012) 2271--2301.

\bibitem{FS} R. Fukuizumi and A. Sacchetti, ``Bifurcation and stability for nonlinear Schr\"{o}dinger
equations with double well potential in the semiclassical limit", J. Stat. Phys. {\bf 145}  (2011),
1546--1594.

\bibitem{GMW} R.H. Goodman, J.L. Marzuola,  and M.I. Weinstein, ``Self-trapping and Josephson
tunneling solutions to the nonlinear Schr\"{o}dinger/Gross-Pitaevskii equation",
Discrete Contin. Dyn. Syst. {\bf 35}  (2015),  225--246.

\bibitem{Grad} I.S. Gradshteyn and I.M. Ryzhik,
{\em Table of integrals, series and products}, 6th edition,
Academic Press, San Diego, CA (2005)

\bibitem{GSS1}
M.~Grillakis, J.~Shatah, and W.~Strauss, Stability theory of solitary wawes
  in the presence of symmetry {I}, \newblock{\em J. Funct. Anal.} \textbf{74} (1987),
  160--197.

%\bibitem{HMZ1} J. Holmer, J.L. Marzuola and M. Zworski.  {\em Soliton splitting by external delta potentials,}
%{\it Journal of Nonlinear Science}, {\bf 17}, Number 4 (2007), 349-367.

\bibitem{[HTM]}
N.~Viet Hung, M.~Trippenbach, and B.~Malomed,
\newblock Symmetric and asymmetric solitons trapped in {H}-shaped potentials.
\newblock {\em Phys.Rev. A} {\bf 84} (2011), 053618 (10 pages).

\bibitem{Kapitula} T. Kapitula, and K. Promislow,
{\it Spectral and dynamical stability of nonlinear waves},
Springer-Verlag (New York, 2013).

\bibitem{KPP} E.W. Kirr, P.G. Kevrekidis, and D.E. Pelinovsky, ``Symmetry-breaking
bifurcation in the nonlinear Schr\"{o}dinger equation with symmetric potentials",
Commun. Math. Phys. {\bf 308} (2011), 795--844.

\bibitem{MW} J.L. Marzuola and M.I. Weinstein, ``Long time dynamics near the symmetry
breaking bifurcation for nonlinear Schr\"{o}dinger/Gross-Pitaevskii equations",
Discrete Contin. Dyn. Syst.  {\bf 28}  (2010),  1505--1554.

\bibitem{N} D.Noja, Nonlinear Schr\"odinger equation on graphs: recent results
and open problems, \newblock{\em Phil. Trans. R. Soc. A}, {\bf 372} (2014), 20130002 (20 pages).

\bibitem{NPS} D. Noja, D.E. Pelinovsky and G. Shaikhova, ``Bifurcations and stability of
standing waves in the nonlinear Schr\"odinger equation on the tadpole graph", Nonlinearity
{\bf 28} (2015), 2343--2378.

% \bibitem{OLBC10} F. W. J. Olver, D. W. Lozier, R. F. Boisvert, and C. W. Clark (eds.),
% \newblock{\em NIST Handbook of Mathematical Functions} (Cambridge University Press, New York, 2010).

\bibitem{OSSS} D. Olson, S. Shukla, G. Simpson and D. Spirn, ``Petviashvilli's Method for the Dirichlet Problem",
Journal of Scientific Computing (2015), to be published.

\bibitem{Pelin-book} D.E. Pelinovsky, {\it Localization in Periodic Potentials: from Schr\"{o}dinger operators
to the Gross--Pitaevskii equation}, Cambridge University Press (Cambridge, 2011).

\bibitem{Pelin} D.E. Pelinovsky, ``Enstrophy growth in the viscous Burgers equation",
Dynamics of PDEs {\bf 9} (2012), 305--340.

\bibitem{PP} D.E. Pelinovsky and T.Phan, ``Normal form for the symmetry-breaking bifurcation in the nonlinear Schr\"{o}dinger
equation",  J. Diff. Eqs. {\bf 253} (2012), 2796--2824.

\bibitem{PS} D.E. Pelinovsky and Stepanyants, ``Convergence  of Petviashvilli's iteration method for numerical approximation of stationary solutions of nonlinear wave equations",  SIAM Journal on Numerical Analysis {\bf 42}, No. 3 (2004), 1110--1127.

\bibitem{[Sob]} Z.~Sobirov, D.~Matrasulov, K.~Sabirov, S.~Sawada, and K.~Nakamura,
\newblock Integrable nonlinear {S}chr\"{o}dinger equation on simple networks:
  connection formula at vertices, \newblock {\em Phys.Rev. E} {\bf 81} (2010), 066602 (10 pages).

\bibitem{[SSBM]}
K.K. Sabirov, Z.A. Sobirov, D.Babajanov, and D.U. Matrasulov,
\newblock{Stationary nonlinear Schrodinger equation on simplest graphs},
\newblock{\em Phys. Lett. A} {\bf 377} (2013), 860--865.


% \bibitem{Scheel} B. Sandstede and A. Scheel, ``Absolute and convective instabilities of waves on unbounded
% and large bounded domains", Physica D {\bf 145} (2000), 233--277.

\bibitem{SS} J. Shatah and W. Strauss, ``Instability of nonlinear bound states",
Comm. Math. Phys. {\bf 100}  (1985),  173--190.

\bibitem{W} M. Weinstein, ``Lyapunov stability of ground states of
nonlinear dispersive evolution equations", Comm. Pure Appl. Math. {\bf 39} (1986), 51--67.

\bibitem{Yang-book} J. Yang, "Nonlinear Waves in Integrable and Nonintegrable System,"
Monographs on Mathematical Modeling and Computation, SIAM Publishing (2010).

\bibitem{[ZS]} I.~Zapata and F.~Sols,
\newblock {A}ndreev reflection in bosonic condensates.
\newblock {\em Phys.Rev.Lett.} {\bf 102} (2009), 180405 (4 pages).


% \bibitem{OZ} V.E. Zakharov and L.A. Ostrovsky, ``Modulation
% instability: The beginning", Physica D {\bf 238} (2009), 540--548.
\end{thebibliography}
\end{document}